\tikzset{commutative diagrams/.cd}
\numberwithin{equation}{subsection}
\newtheorem{theorem}{Theorem}[section]
\newtheorem{lemma}[theorem]{Lemma}
\newtheorem{proposition}[theorem]{Proposition}
\newtheorem{conjecture}[theorem]{Conjecture}
\theoremstyle{definition}
\newtheorem{definition-theorem}[theorem]{Definition-Theorem}
\newtheorem{example}[theorem]{Example}
\newtheorem{question}[theorem]{Question}
\newtheorem{remark}[theorem]{Remark}
\theoremstyle{remark}
\newtheorem*{remark*}{Remark}
\renewcommand{\l}{\lambda}
\newcommand{\OO}{\mathcal{O}}
\newcommand{\Ob}{\overline{\mathcal{O}}}
\newcommand{\trdeg}{{\rm trdeg}}
\newcommand{\Z}{{\mathbb Z}}
\newcommand{\N}{{\mathbb N}}
\newcommand{\Q}{{\mathbb Q}}
\newcommand{\C}{{\mathbb C}}
\newcommand{\Fp}{{\mathbb F}_p}
\newcommand{\Fq}{{\mathbb F}_q}
\newcommand{\Fpbar}{{\overline{\mathbb F}_p}}
\newcommand{\Kbar}{{\overline{K}}}
\newcommand{\Lbar}{{\overline{L}}}
\newcommand{\hhat}{\widehat{h}} 
\begin{document}
\title[Collision of orbits]{Collision of orbits for families of polynomials defined over fields of positive characteristic}

\author{Shamil Asgarli}
\address{Department of Mathematics and Computer Science, Santa Clara University,  500 El Camino Real, Santa Clara, CA 95053. Email: \texttt{sasgarli@scu.edu}}

\author{Dragos Ghioca}
\address{Department of Mathematics, University of British Columbia, Vancouver, BC V6T 1Z2. Email: \texttt{dghioca@math.ubc.ca}}

\subjclass[2020]{Primary 37P05; Secondary 37P30, 11T06}



\begin{abstract}
Let $L$ be a field of positive characteristic $p$ with a fixed algebraic closure $\overline{L}$, and let $\alpha_1,\alpha_2,\beta\in L$. For an integer $d\ge 2$, we consider the family of polynomials $f_{\lambda}(z) := z^d+\lambda$, parameterized by $\lambda\in\overline{L}$. Define $C(\alpha_1,\alpha_2;\beta)$ to be the set of all $\lambda\in\overline{L}$ for which there exist $m,n\in\N$ such that $f_{\lambda}^m(\alpha_1)=f_{\lambda}^n(\alpha_2)=\beta$. In other words, $C(\alpha_1,\alpha_2;\beta)$ consists of all $\lambda\in\overline{L}$ with the property that the orbit of $\alpha_1$ collides with the orbit of $\alpha_2$ under the same polynomial $f_{\lambda}$ precisely at the point $\beta$. Assuming $\alpha_1,\alpha_2,\beta$ are not all contained in a finite subfield of $L$, we provide explicit necessary and sufficient conditions under which $C(\alpha_1,\alpha_2;\beta)$ is infinite. We also discuss the remaining case where $\alpha_1,\alpha_2,\beta\in \overline{\mathbb F}_p$ and provide ample computational data that suggest a somewhat surprising conjecture. Our problem fits into a long series of questions in the area of unlikely intersections in arithmetic dynamics, which have been primarily studied over fields of characteristic $0$. Working in characteristic $p$ adds significant difficulties, but also reveals the subtlety of our problem, especially when some of the points lie in a finite field or when $d$ is a power of $p$.
\end{abstract}

\maketitle

\section{Introduction}
\label{sec:intro}


\subsection{Notation}

Throughout this paper, we denote by $\N$ the set of all positive integers. For each field $K$, we denote by $\Kbar$ an algebraic closure of $K$; if $K_0$ is the prime subfield of $K$, we let $\overline{K_0}$ be its algebraic closure inside $\Kbar$. We mention standard definitions from algebraic dynamics. For any self-map $\Phi$ on a quasiprojective variety $X$ and for any $n\in\N$, we let $\Phi^n$ be the $n$-th compositional iterate of $\Phi$; by convention, $\Phi^0$ is the identity map. We define the \emph{(strict) forward orbit} of a point $\alpha\in X$ as the set of all points 
$\Phi^n(\alpha)$, for $n\ge 1$. 
Similarly, we define the \emph{(strict) backward orbit} of $\alpha$ as the set
$\Ob_\Phi(\alpha)$, which consists of all $\gamma\in X$ such that there exists $m\in\N$ with $\Phi^m(\gamma)=\alpha$. A point $\alpha$ is \emph{preperiodic} under the action of $\Phi$ if  there exist $0\le m<n$ such that $\Phi^m(\alpha)=\Phi^n(\alpha)$; if $m=0$, then $\alpha$ is \emph{periodic}.


\subsection{The unlikely intersection principle in arithmetic dynamics}

Several central questions in arithmetic geometry are rooted in the principle of unlikely intersections; for more details, we refer the reader to the excellent book of Zannier \cite{Umberto}. Over the past 30 years, there has been considerable research in algebraic dynamics on questions framed by this general principle; we provide a couple of prominent examples below.

The dynamical Mordell-Lang (DML) conjecture considers a quasiprojective variety $X$ with an endomorphism $\Phi$ (over a field $L$ of characteristic $0$), an irreducible curve $C\subset X$, and a point $\alpha\in X(L)$. The conjecture predicts that the \emph{unlikely} occurrence of infinitely many points in $\OO_\Phi(\alpha)\cap C$ \emph{must} be explained only by the fact that $C$ is periodic under the action of $\Phi$ (i.e., $\Phi^n(C)\subseteq C$ for some $n\in\N$). For more details on the DML conjecture and a survey of some of the partial results, we refer the reader to \cite{BGT-book}.

In a different direction, we describe the problem of \emph{simultaneously preperiodic points} (which was itself motivated by \cite{M-Z-1,M-Z-2}) for an algebraic family of polynomials; for a concise survey discussing this research question, see \cite{25}. Consider a family of polynomials $f_\l$ of degree $d\ge 2$, whose coefficients depend polynomially on $\l\in\C$, and two points $\alpha,\beta\in\C$. The \emph{unlikely} existence of infinitely many parameters $\l\in\C$ such that both $\alpha$ and $\beta$ are preperiodic for $f_\l$ can only occur if $\alpha$ and $\beta$ are \emph{dynamically related} with respect to the \emph{entire} family of polynomials $f_\l$ (see \cite[Theorem~1.3]{Matt-2}, which extends the previous results of \cite{Matt, GHT-ANT}). For a broader survey of recent work and new research directions on unlikely intersection questions in arithmetic dynamics, we refer the reader to \cite{survey}. 

The vast majority of the proven results and open problems in arithmetic dynamics concern the algebraic dynamical systems defined over fields of characteristic $0$ (see \cite{survey} and the references therein). It is only recently that several outstanding conjectures have been considered in positive characteristic; in each case, new features emerge for the dynamical systems in characteristic $p$. These new intricacies are due to the presence of the Frobenius map (similar to the isotriviality issues appearing in \cite{Tom}) and the existence of additive polynomials of degree greater than one (which lead to new unlikely intersection questions as in \cite{BM-1, BM-2}). Indeed, the DML conjecture (see \cite{Banach, JIMJ, TAMS, Alina, XY, Y}), the problem of simultaneous preperiodic points for an algebraic family of polynomials (see \cite{1}), and the Zariski Dense Orbit conjecture (see \cite{Alice-Tom} for its formulation over fields of characteristic $0$ and \cite{G-Sina, G-Sina-2} for the problem over fields of characteristic $p$) are all more subtle when studied in a positive characteristic setting.

Motivated by \cite{Matt}, the second author studied the following problem in \cite{1}. Given an integer $d\ge 2$,  a field $L$ of characteristic $p$, and points $\alpha_1,\alpha_2\in L$, we obtained (see \cite[Theorem~1.1]{1}) necessary and sufficient conditions for the existence of infinitely many $\l\in \Lbar$ such that both $\alpha_1$ and $\alpha_2$ are preperiodic for the polynomial $f_\l(z)\colonequals z^d+\l$ (see also \cite{22} for extensions of this result to more general families of polynomials).  In this article, we expand the problem studied in \cite{1} to the following setting. Working again with the family of polynomials $f_\l(z)\colonequals z^d+\l$ and given  \emph{two starting points} $\alpha_1,\alpha_2$, we study the conditions under which the strict forward orbits of these starting points contain a given \emph{target point} $\beta$. We refer to this setting as the \emph{colliding orbits problem}. A similar question of colliding orbits was previously studied in the context of Drinfeld modules (see \cite{G-JNT}).


\subsection{Our main result}
In the present paper, we prove the following statement.

\begin{theorem}
\label{thm:main_0}
Let $L$ be a field of characteristic $p>0$, let $\alpha_1,\alpha_2,\beta\in L$ and let $d\ge 2$ be an integer. Consider the family of polynomials $f_\l(z)=z^d+\l$, parameterized by $\l\in\Lbar$. Assume $\alpha_1,\alpha_2,\beta$ are not all contained in a finite subfield of $L$. Then  the set
\begin{equation}
\label{eq:0001}
C(\alpha_1,\alpha_2;\beta)\colonequals\left\{\l\in\Lbar\colon \text{ there exist $m,n\in\N$ such that }f_\l^m(\alpha_1)=f_\l^n(\alpha_2)=\beta\right\}
\end{equation}
is infinite if and only if exactly one of the following two conditions holds:
\begin{itemize}
\item[(A)] $\alpha_1^d=\alpha_2^d$; 
\item[(B)] $d=p^\ell$ for some $\ell\in\N$ and there exists a finite subfield $\Fq\subset L$ such that $\delta_1\colonequals\alpha_2-\alpha_1\in\Fq^\ast$ and $\delta_2\colonequals\beta-\alpha_1\in\Fq$. Furthermore, the system of two equations:
\begin{equation}
\label{eq:system_010}
\left\{\begin{array}{ccc}
\delta_1 & = & \sum_{i=0}^{s_1-1}\gamma^{p^{ik\ell}}\\
\\
\delta_2 & = & \sum_{i=0}^{s_2-1}\gamma^{p^{ik\ell}}\end{array}\right.
\end{equation}
has a solution  $(\gamma,k,s_1,s_2)\in\Fq^\ast\times\N\times\N\times\N$. 
\end{itemize}
\end{theorem}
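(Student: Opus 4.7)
My plan is to prove the two directions of the equivalence separately; I expect the necessity to be the delicate part. For \emph{sufficiency} in Case (A), the key observation is that $\alpha_1^d=\alpha_2^d$ forces $f_\lambda(\alpha_1)=f_\lambda(\alpha_2)$ identically in $\lambda$, so the two forward orbits coincide after one step. Therefore $\lambda\in C(\alpha_1,\alpha_2;\beta)$ reduces to $f_\lambda^m(\alpha_1)=\beta$ for some $m\ge 1$. Writing $P_m(\lambda)\colonequals f_\lambda^m(\alpha_1)-\beta$, a polynomial of degree $d^{m-1}$ in $\lambda$, the union of the root sets as $m$ varies is infinite because a fixed $\lambda_0$ can be a root of $P_m$ for infinitely many $m$ only when $\beta$ is preperiodic under $f_{\lambda_0}$, which happens for only finitely many values of $\lambda_0$.

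For \emph{sufficiency} in Case (B), I would exploit the Frobenius-linearity identity $f_\lambda^k(z+w)=f_\lambda^k(z)+w^{p^{k\ell}}$, valid because $d=p^\ell$. Enlarging $k$ so that $[\Fq:\Fp]$ divides $k\ell$ makes $c^{p^{k\ell}}=c$ for every $c\in\Fq$, and in particular $f_\lambda^k(\alpha+c)=f_\lambda^k(\alpha)+c$ for such $c$. Let $S_k(\lambda)\colonequals\sum_{i=0}^{k-1}\lambda^{p^{i\ell}}$. Any $\lambda\in\Lbar$ satisfying $S_k(\lambda)=\alpha_1-\alpha_1^{p^{k\ell}}+\gamma$ then gives $f_\lambda^k(\alpha_1)=\alpha_1+\gamma$, and a short induction shows $f_\lambda^{jk}(\alpha_1)=\alpha_1+\sum_{i=0}^{j-1}\gamma^{p^{ik\ell}}$ for every $j\ge 1$. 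Specializing to $j=s_2$ yields $f_\lambda^{s_2 k}(\alpha_1)=\alpha_1+\delta_2=\beta$, and $j=s_1$ yields $f_\lambda^{s_1 k}(\alpha_1)=\alpha_1+\delta_1=\alpha_2$; thus, assuming without loss of generality $s_2>s_1$ (achievable by adding common period shifts to both $s_1,s_2$), we obtain $f_\lambda^{(s_2-s_1)k}(\alpha_2)=f_\lambda^{s_2 k}(\alpha_1)=\beta$, so $\lambda\in C(\alpha_1,\alpha_2;\beta)$. The equation $S_k(\lambda)=\alpha_1-\alpha_1^{p^{k\ell}}+\gamma$ contributes $p^{(k-1)\ell}$ values of $\lambda$; replacing $k$ by admissible multiples and adjusting $\gamma$ accordingly so that the partial-sum identities persist produces infinitely many distinct $\lambda\in C(\alpha_1,\alpha_2;\beta)$, using that $\alpha_1\notin\Fpbar$ to prevent collapse.

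For \emph{necessity}, assume $C(\alpha_1,\alpha_2;\beta)$ is infinite. Since $V_m\cap W_n$ (where $V_m$ consists of the roots of $P_m$ and $W_n$ of $Q_n(\lambda)\colonequals f_\lambda^n(\alpha_2)-\beta$) is finite for each pair $(m,n)$, infinitely many pairs $(m,n)$ must contribute common roots. My plan is to adapt the height and equidistribution techniques developed in \cite{1} for the companion problem of simultaneous preperiodicity in the same family. One embeds the parameter space into the Berkovich line $\Aberk{L}$ associated to a well-chosen valuation of $L$ (available precisely because not all of $\alpha_1,\alpha_2,\beta$ lie in a finite subfield), shows that the canonical heights $\hhat_{f_\lambda}(\alpha_1)$, $\hhat_{f_\lambda}(\alpha_2)$, $\hhat_{f_\lambda}(\beta)$ all vanish on the infinite set $C$, and invokes a Baker--DeMarco-type equidistribution to conclude the equality of the corresponding bifurcation measures. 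In characteristic $0$ this already forces $\alpha_1^d=\alpha_2^d$; when $d=p^\ell$ the failure of separability allows an additional Frobenius-partial-sum channel, and unwinding the explicit identity $f_\lambda^m(\alpha_1)=\alpha_1^{p^{m\ell}}+S_m(\lambda)$ modulo a suitable place of $L$ converts the infinite family of coincidences into the $\Fq$-additive system displayed in (B).

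The main obstacle I anticipate is the necessity in the $d=p^\ell$ regime: ruling out \emph{spurious} Frobenius-based coincidences and verifying that the partial-sum system of (B) is the unique additional source. This will require specialization arguments robust under inseparability, control over the Artin--Schreier-type reducibility of the iterates $S_m(\lambda)$, and careful bookkeeping to force $\delta_1,\delta_2$ into a common finite subfield by examining the constant parts of the coincidence identities at a well-chosen place. The hypothesis that $\alpha_1,\alpha_2,\beta$ are not all contained in a finite subfield is essential throughout: it provides the nondegenerate valuations needed for the equidistribution step and cleanly separates this result from the genuinely different (and as the abstract notes, only conjecturally understood) situation in which all three points lie in $\Fpbar$.
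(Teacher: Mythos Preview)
Your sufficiency argument for Case~(A) has a genuine gap. You claim that the union of the root sets of $P_m(\lambda)=f_\lambda^m(\alpha_1)-\beta$ is infinite because a fixed $\lambda_0$ can be a root for infinitely many $m$ only when $\beta$ is preperiodic under $f_{\lambda_0}$, ``which happens for only finitely many values of $\lambda_0$.'' That last assertion is false: for any $\beta\in L$, the set of $\lambda_0\in\Lbar$ for which $\beta$ is preperiodic under $f_{\lambda_0}$ is typically infinite (for each $n$, the equation $f_\lambda^n(\beta)=\beta$ has degree $d^{n-1}$ in $\lambda$, and varying $n$ produces infinitely many roots). Over a function field of positive characteristic there is no Northcott-type finiteness without a degree bound, so bounded canonical height alone does not help either. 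The paper's proof of this direction (its Theorem~\ref{thm:converse_i}) is substantially deeper: when $d=p^\ell$ the separability of $P_m(\lambda)-\beta$ already gives $p^{(m-1)\ell}$ distinct roots, but when $d$ is not a power of $p$ one argues by contradiction, writes $P_{m,\alpha}(u)-\beta=\prod_i(u-\lambda_i)^{e_{m,i}}$ for the putative finite root set, uses the recurrence $P_{m+1,\alpha}(u)=P_{m,\alpha}(u)^d+u$ to produce infinitely many points of a finitely generated subgroup of $\mathbb{G}_m^2$ on an irreducible curve that is not a torus coset, and then invokes the Moosa--Scanlon structure theorem together with Laurent's theorem on the multiplicative Mordell--Lang problem to reach a contradiction on the possible degrees $d^{m-1}$.

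Your sufficiency argument for Case~(B) is essentially the paper's (Propositions~\ref{prop:findings} and~\ref{prop:one_to_infinite}); note however that the hypothesis $\alpha_1\notin\Fpbar$ is not needed there---the separable equation $S_k(\lambda)=\alpha_1-\alpha_1^{p^{k\ell}}+\gamma$ already has $p^{(k-1)\ell}$ distinct roots, and replacing $k$ by $k+r$ (where $q=p^r$) preserves the system~\eqref{eq:system_010} while strictly increasing the degree. For necessity, your outline is in the right spirit but imprecise on one point: the canonical heights $\hhat_{\lambda}(\alpha_j)$ do \emph{not} vanish on $C(\alpha_1,\alpha_2;\beta)$ (indeed $d^m\hhat_\lambda(\alpha_1)=\hhat_\lambda(\beta)$, which need not be zero). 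What the paper proves instead is a variation estimate $\bigl|\hhat_\lambda(\alpha)-h(\lambda)/d\bigr|\le h(\alpha)$, from which one extracts a sequence $\lambda_k\in C(\alpha_1,\alpha_2;\beta)$ with $\hhat_{\lambda_k}(\alpha_j)\to 0$; only then do the equidistribution/rigidity results of \cite{1} apply to force $\hhat_{\lambda,v}(\alpha_1)=\hhat_{\lambda,v}(\alpha_2)$ for all $\lambda,v$, and the remaining case analysis (Propositions~\ref{prop:diff_fpbar} and~\ref{prop:diff_fpbar_2}) when $d=p^\ell$ pins down $\alpha_1-\alpha_2,\alpha_1-\beta\in\Fpbar$.
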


\begin{remark}
\label{rem:not_symmetric}
At first glance, alternative~(B) in Theorem~\ref{thm:main_0} appears asymmetrical, as the roles of $\alpha_1$ and $\alpha_2$ apparently cannot be interchanged. However, as we will show in Lemma~\ref{lem:actually}, we can always reduce the problem to the system~\eqref{eq:system_010}, where the constants are defined relative to $\alpha_1$ as $\delta_1\colonequals \alpha_2-\alpha_1$ and $\delta_2\colonequals\beta-\alpha_1$. 

Furthermore, the condition $\alpha_1\ne\alpha_2$ in alternative~(B) (since $\delta_1\in\mathbb{F}_{q}^{\ast}$) ensures that alternatives (A) and (B) are mutually exclusive. For $d=p^{\ell}$, alternative (A) becomes $\alpha_1^{p^\ell}=\alpha_2^{p^\ell}$, which is equivalent to $\alpha_1=\alpha_2$ in characteristic $p$. 
\end{remark}

By definition, for each  $\l\in C(\alpha_1,\alpha_2;\beta)$, we have $\beta\in \OO_{f_\l}(\alpha_1)\cap\OO_{f_\l}(\alpha_2)$, i.e., the orbits of $\alpha_1$ and $\alpha_2$ under the action of $f_\l$ collide at the point $\beta$. The existence of infinitely many such parameters $\l$ is clearly an unlikely event, which we show occurs only when $\alpha_1$, $\alpha_2$, and $\beta$ are dynamically related. Indeed, condition~(A) in Theorem~\ref{thm:main_0} states that the orbits of $\alpha_1$ and $\alpha_2$ \emph{merge after one iteration for all parameters $\l$}, i.e., $\OO_{f_\l}(\alpha_1)=\OO_{f_\l}(\alpha_2)$.

Condition~(B) provides a more subtle dynamical relation between our three points when $d=p^\ell$. For any $\l\in\Lbar$, a suitable iterate of the polynomial $f_\l(z)=z^{p^\ell}+\l$ commutes with any given translation polynomial $T_\xi(z)\colonequals z+\xi$ for $\xi\in\Fpbar$. Specifically, if $\xi\in \mathbb{F}_{p^{r\ell}}$ then $f_\l^r\circ T_\xi =T_\xi \circ f_\l^r$ (see equation~\eqref{eq:84} for $f_\l^r$). Since $\alpha_1,\alpha_2,\beta$ differ by elements from $\Fpbar$ in condition~(B), this commutativity establishes that these three points are indeed dynamically related with respect to our entire family of polynomials $f_\l(z)$. The precise role of the system~\eqref{eq:system_010} is explained in Section~\ref{sec:ii} (see Proposition~\ref{prop:findings}). 

For the converse direction of Theorem~\ref{thm:main_0}, we prove slightly stronger statements that also cover the case where $\alpha_1,\alpha_2,\beta\in\Fpbar$ (see Theorems~\ref{thm:converse_ii}~and~\ref{thm:converse_i}). In Section~\ref{subsec:fpbar}, based on extensive numerical experiments, we propose Conjecture~\ref{conf:fpbar} to address the case where $\alpha_1,\alpha_2,\beta\in\Fpbar$ and $d$ is not a power of $p$. In Section~\ref{sec:strategy}, we also formulate a general Conjecture~\ref{conj:general} regarding colliding orbits for families of polynomials.


\subsection{Further connections for our results}

Another motivation for our Theorem~\ref{thm:main_0} arises from  \cite{GHT-PJM}. A special case of \cite[Theorem~1.1]{GHT-PJM} can be formulated for the Legendre family of elliptic curves $E_t$ given by the equation $y^2=x(x-1)(x-t)$ as we vary $t\in\overline{\Q}$. We denote by $[k]_t$ the multiplication-by-$k$ map (for any $k\in\Z$) on $E_t$. We also let $\mathcal{E}$ be the elliptic surface corresponding to the Legendre family and let $[k]$ be the corresponding multiplication-by-$k$ map on $\mathcal{E}$. Any section $\mathcal{P}$ on $\mathcal{E}$ corresponds to an algebraic family of points $P_t\in E_t$ (for all but finitely many $t\in\overline{\Q}$). Then \cite[Theorem~1.1]{GHT-PJM} asserts that for any $3$ sections $\mathcal{P},\mathcal{Q},\mathcal{R}$ on $\mathcal{E}$, if the set    
$$C\left(\mathcal{P},\mathcal{Q};\mathcal{R}\right) \colonequals\left\{t\in\overline{\Q}\colon \text{ there exist $m,n\in\Z$ such that $[m]_t(P_t)=[n]_t(Q_t)=R_t$}\right\}$$
is infinite, then  at least one of the following two conditions must hold:
\begin{itemize}
\item there exist $a,b\in\Z$, not both equal to $0$, such that $[a](\mathcal{P})=[b](\mathcal{Q})$;
\item there exists $c\in\Z$ such that either $[c](\mathcal{P})=\mathcal{R}$ or $[c](\mathcal{Q})=\mathcal{R}$.
\end{itemize}
In other words, if there exist infinitely many $t\in\overline{\Q}$ such that the cyclic groups generated by both $P_t$ and $Q_t$ contain $R_t$ (in $E_t$), then at least $2$ of the $3$ sections must be linearly dependent globally, on the elliptic surface. Once again, we encounter the principle of unlikely intersections: the existence of infinitely many $t\in\overline{\Q}$ for which $R_t$ is contained in the orbits of \emph{both} $P_t$ and $Q_t$ under the action of $\Z$ is explained \emph{only} by a global dynamical relation between their corresponding sections, $\mathcal{P}, \mathcal{Q}$, and $\mathcal{R}$.

\begin{remark}
\label{rem:GCD}
The motivation for \cite[Theorem~1.1]{GHT-PJM} itself comes from the work of Hsia and Tucker \cite{HT} on a dynamical analogue of the classical \emph{GCD}-problem. Given multiplicatively independent $a,b\in\N$, the classical \emph{GCD}-problem (solved by Bugeaud, Corvaja, and Zannier \cite{BCZ}) provides good bounds for $\gcd(a^n-1,b^n-1)$ as a function of $n\in\N$. Several extensions of the result from \cite{BCZ} were obtained, going beyond the classical setting and studying the question for function fields, including in positive characteristic (see \cite{A-R, C-Z-1, C-Z-2, GHT-NJM}). Our own work in Theorem~\ref{thm:main_0} can be viewed in a similar light, as it is essentially a question about the greatest common divisor of orbits generated by two algebraic families of polynomials (see Remark~\ref{rem:GCD_2} and Section~\ref{subsec:fpbar}).
\end{remark}

The hypothesis in Theorem~\ref{thm:main_0} that $\beta$ is contained in the forward orbits of both $\alpha_1$ and $\alpha_2$ (under the action of $f_\l$) can be restated as follows: $\alpha_1$ and $\alpha_2$ are contained in the backward orbit of $\beta$ (under the action of $f_\l$). The study of backward orbits and their associated arboreal Galois groups is a topic of great interest, which originated in the work of Odoni \cite{27}. For a thorough overview, we refer the reader to the excellent survey by Jones \cite{26}; see also \cite{Rob-arb, Rob-arb-2, arb-3, 21, Tom, 23} for a sample of further work in this area. It is also worth noting that the case of a polynomial whose critical points have colliding orbits represents a special case in the study of the corresponding arboreal Galois groups (see \cite{23, 24, Rob-2, Rob-3}).


\subsection{Plan for our paper}

In Section~\ref{sec:strategy}, we present the general strategy for our proof of Theorem~\ref{thm:main_0}. We split the content of Theorem~\ref{thm:main_0} into three distinct results: Theorems~\ref{thm:main}, \ref{thm:converse_ii}, and ~\ref{thm:converse_i}. In Section~\ref{sec:strategy}, we also outline future research directions in the area of colliding orbits by formulating Conjecture~\ref{conj:general} for arbitrary families of polynomials. For a brief discussion of the characteristic $0$ case, see Remark~\ref{subsec:char_0}.

In Sections~\ref{sec:preliminary},~\ref{sec:heights}~and~\ref{sec:bounds}, we establish useful preliminary results, which are then employed in the proof of Theorem~\ref{thm:main}. We finish its proof in Section~\ref{sec:proof}, thus completing the direct implication in Theorem~\ref{thm:main_0}.

In Section~\ref{sec:i}, we complete the proof of Theorem~\ref{thm:converse_i}, while in Section~\ref{sec:ii}, we prove Theorem~\ref{thm:converse_ii}. Combined, these two results provide the converse implication in Theorem~\ref{thm:main_0}.

We conclude by discussing in Section~\ref{subsec:fpbar} the case of colliding orbits when the starting points $\alpha_1,\alpha_2$ and the target point $\beta$ all live in a finite field. We believe (see Conjecture~\ref{conf:fpbar}) that in this case, the set $C(\alpha_1,\alpha_2;\beta)$ is infinite, provided $d$ is not a power of $p$. We have ample numerical evidence to support this conjecture; furthermore, we formulate additional questions and conjectures all predicting a higher-than-expected frequency of unlikely intersections when the entire dynamical system is defined over $\Fpbar$. 

\bigskip

\textbf{Acknowledgments.} The second author was partially supported by an NSERC Discovery grant. We warmly thank the referee for their helpful comments and suggestions. 


\section{Strategy for our proof of Theorem~\ref{thm:main_0} and further extensions}
\label{sec:strategy}

In Subsection~\ref{subsec:direct}, we state Theorem~\ref{thm:main} and then explain its proof strategy. In Subsection~\ref{subsec:converse}, we state Theorems~\ref{thm:converse_ii}~and~\ref{thm:converse_i} and briefly mention key ideas in their proofs. We discuss possible extensions of our results in Subsection~\ref{subsec:conjecture}.


\subsection{The direct implication in Theorem~\ref{thm:main_0}}
\label{subsec:direct}
We will prove the following.

\begin{theorem}
\label{thm:main}
Let $L$ be a field of prime characteristic $p$, let $\alpha_1,\alpha_2,\beta\in L$ and let $d\ge 2$ be an integer. Consider the family of polynomials $f_\l(z)=z^d+\l$, parameterized by $\l\in\Lbar$. Let $\Fpbar$ denote the algebraic closure of $\Fp$ inside $\Lbar$. If the set
\begin{equation}
\label{eq:1}
C(\alpha_1,\alpha_2;\beta)\colonequals\left\{\l\in\Lbar\colon \text{ there exist $m,n\in\N$ such that }f_\l^m(\alpha_1)=f_\l^n(\alpha_2)=\beta\right\}
\end{equation}  
is infinite, then at least one of the following conditions must hold:
\begin{itemize}
\item[(i)] $\alpha_1^d=\alpha_2^d$.
\item[(ii)] $d=p^\ell$ for some $\ell\in\N$, and $\alpha_1-\beta, \alpha_2-\beta\in\Fpbar$.
\item[(iii)] $\alpha_1,\alpha_2,\beta\in \Fpbar$.
\end{itemize}
\end{theorem}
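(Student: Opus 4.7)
The plan is to adapt the arithmetic/equidistribution strategy developed for similar unlikely-intersection problems in algebraic dynamics (cf.\ \cite{Matt}, \cite{GHT-ANT}, \cite{1}) to the present collision-of-orbits setting, with the extra care required in positive characteristic. First I would reduce to the non-trivial case by assuming that condition~(iii) fails, so that not all of $\alpha_1,\alpha_2,\beta$ lie in $\Fpbar$; we then work over the function field $K_0 \colonequals \Fp(\alpha_1,\alpha_2,\beta)$, which has positive transcendence degree over $\Fp$. Each $\lambda\in C(\alpha_1,\alpha_2;\beta)$ comes with a witness pair $(m,n)\in\N^2$. For a fixed pair, the two equations $f_\lambda^m(\alpha_1)=\beta$ and $f_\lambda^n(\alpha_2)=\beta$ are polynomial conditions in $\lambda$ of degrees $d^{m-1}$ and $d^{n-1}$ respectively, so each pair contributes only finitely many parameters. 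Hence infinitude of $C(\alpha_1,\alpha_2;\beta)$ forces the witness pairs to be unbounded, and after passing to a subsequence one may assume $m_k\to\infty$ along a sequence $(\lambda_k)$ of parameters.

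The main engine I would use is the Call--Silverman canonical height $\hhat_\lambda$ attached to $f_\lambda$, combined with the preliminary local and archimedean estimates that Sections~\ref{sec:preliminary}--\ref{sec:bounds} are designed to furnish. The functional equation $\hhat_\lambda(f_\lambda(x))=d\cdot\hhat_\lambda(x)$ yields
\[
\hhat_{\lambda_k}(\alpha_1) \;=\; d^{-m_k}\,\hhat_{\lambda_k}(\beta) \quad\text{and}\quad \hhat_{\lambda_k}(\alpha_2) \;=\; d^{-n_k}\,\hhat_{\lambda_k}(\beta),
\]
so along the sequence, $\alpha_1$ (and, whenever $n_k\to\infty$ as well, also $\alpha_2$) becomes a height-collapsing point relative to the family. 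Viewing $\lambda\mapsto \hhat_\lambda(\gamma)$ as itself a height on parameter space for each $\gamma\in\{\alpha_1,\alpha_2,\beta\}$, simultaneous collapse on an infinite set of parameters forces, through a Northcott-style argument adapted to positive characteristic, the three ``bifurcation'' heights to be globally proportional. Equivalently, the three marked points must be globally dynamically related in the entire family $f_\lambda$.

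Finally I would translate this global dynamical relation into an explicit algebraic identity by comparing the leading-coefficient structure of $f_\lambda^m(\alpha_1)$ and $f_\lambda^n(\alpha_2)$ as polynomials in $\lambda$. This either yields $\alpha_1^d=\alpha_2^d$, which is condition~(i), or else forces $d=p^\ell$; in the latter case, the characteristic-$p$ identity $f_\lambda^r(z)=z^{p^{r\ell}}+(\text{polynomial in }\lambda\text{ alone})$ makes a suitable iterate of $f_\lambda$ commute with translations by $\Fpbar$, so the differences $\alpha_1-\beta$ and $\alpha_2-\beta$ are pinned into $\Fpbar$, giving condition~(ii). The main technical obstacle I anticipate is the height analysis in the second step: in positive characteristic one cannot directly invoke classical Baker--DeMarco/Berkovich equidistribution, because Frobenius produces isotrivial phenomena that break the usual dichotomy; the substitute argument must be tuned to exploit the hypothesis that not all three marked points lie in $\Fpbar$, which is precisely why the residual alternatives~(ii) and~(iii) appear in the statement rather than collapsing into~(i) as they would in characteristic zero.
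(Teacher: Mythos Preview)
Your high-level strategy matches the paper's: reduce away from condition~(iii), show the canonical heights $\hhat_{\lambda_k}(\alpha_1),\hhat_{\lambda_k}(\alpha_2)$ collapse to zero along an infinite sequence of parameters, upgrade this to equality of \emph{all} local canonical heights via the equidistribution-type input from \cite{1}, and then read off the algebraic relation. But two of your steps have genuine gaps relative to what the paper actually does.

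First, you jump straight to working over $K_0=\Fp(\alpha_1,\alpha_2,\beta)$ of ``positive transcendence degree''. The paper does not do this: it first proves (Lemma~\ref{lem:trdeg_1_2} and Proposition~\ref{prop:trdeg_right}) that infinitude of $C(\alpha_1,\alpha_2;\beta)$ forces $\overline{\Fp(\alpha_1)}=\overline{\Fp(\alpha_2)}$ and then that either $\alpha_1^d=\alpha_2^d$ already, or $\beta\in\overline{\Fp(\alpha_1)}$. This reduction to transcendence degree~$1$ is not cosmetic. It is where condition~(i) is obtained in the case $\beta\notin\overline{\Fp(\alpha_1)}$, via a direct absolute-value argument (your ``leading-coefficient comparison'' belongs here, in Lemma~\ref{lem:exact_form}, not at the end). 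And the height machinery of Sections~\ref{sec:heights}--\ref{sec:bounds}, together with the two black-box inputs Theorems~\ref{thm:4.1} and~\ref{thm:5.1} from \cite{1}, are all stated for a finite extension of the perfect closure of $\Fpbar(t)$; they are not available over a base of transcendence degree~$>1$.

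Second, writing $\hhat_{\lambda_k}(\alpha_1)=d^{-m_k}\hhat_{\lambda_k}(\beta)$ does not by itself give collapse: you need $\hhat_{\lambda_k}(\beta)$ bounded, and $\beta$ is not preperiodic for $f_{\lambda_k}$. The paper handles this with Proposition~\ref{prop:useful_variation} (the two-sided bound $\tfrac{h(\lambda)}{d}-h(\alpha)\le\hhat_\lambda(\alpha)\le\tfrac{h(\lambda)}{d}+h(\alpha)$), which feeds into Proposition~\ref{prop:useful_2} to bound $h(\lambda_k)$ and hence $\hhat_{\lambda_k}(\beta)$ uniformly. Your sketch gestures at ``Sections~\ref{sec:preliminary}--\ref{sec:bounds}'' but does not isolate this point. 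Finally, in the non-$p$-power case the paper does \emph{not} extract $\alpha_1^d=\alpha_2^d$ by comparing coefficients of $f_\lambda^m(\alpha_j)$; it invokes Theorem~\ref{thm:5.1} (equality of all local heights $\Rightarrow\alpha_1^d=\alpha_2^d$), and in the $d=p^\ell$ case it uses explicit local-height computations (Propositions~\ref{prop:diff_fpbar} and~\ref{prop:diff_fpbar_2}) rather than the commutation-with-translations heuristic you describe.
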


We now outline the strategy for proving Theorem~\ref{thm:main}. First, assuming condition~(iii) does \emph{not} hold, the infinitude of the set $C(\alpha_1,\alpha_2;\beta)$ implies that either condition~(i) is satisfied, or $\trdeg_{\Fp}(\Fp(\alpha_1,\alpha_2,\beta))=1$; this reduction is proved in Subsection~\ref{subsec:trdeg} through a series of Lemmas and Propositions. This allows us to set up the height machine in Section~\ref{sec:heights}; in particular, we obtain a key technical statement (see Proposition~\ref{prop:useful_variation}) regarding the variation of the (global) canonical height $\hhat_\l(\alpha)$ of a point $\alpha$ (associated to a polynomial $f_\l$ from our family) compared to the Weil heights of $\l$ and $\alpha$. In turn, Proposition~\ref{prop:useful_variation} allows us to show (see Proposition~\ref{prop:useful}) that there exists a sequence $\{\l_k\}_{k\in\N}\subseteq  C(\alpha_1,\alpha_2;\beta)$ such that
\begin{equation}
\label{eq:to_0}
\lim_{k\to\infty} \hhat_{\l_k}(\alpha_1)= \lim_{k\to\infty}\hhat_{\l_k}(\alpha_2)=0.
\end{equation}
Equation~\eqref{eq:to_0} is the crucial hypothesis needed to apply Theorem~\ref{thm:4.1}, which leads to an equality of the canonical heights of $\alpha_1$ and $\alpha_2$ with respect to \emph{each} polynomial $f_\l$. Then Theorem~\ref{thm:5.1} immediately provides the desired conclusion in Theorem~\ref{thm:main} when $d$ is not a power of $p$. The remaining case, where $d=p^\ell$ for some $\ell\in\N$, requires a more in-depth analysis to arrive at condition~(ii) in Theorem~\ref{thm:main} (see Propositions~\ref{prop:diff_fpbar}~and~\ref{prop:diff_fpbar_2}). To obtain the more precise information from condition~(B) in Theorem~\ref{thm:main_0} regarding system~\eqref{eq:system_010}, we will rely on Theorem~\ref{thm:converse_ii}.


\subsection{The converse implication in Theorem~\ref{thm:main_0}}
\label{subsec:converse}
In Section~\ref{sec:ii}, we prove the following result, which provides (along with Theorem~\ref{thm:main}) the full conclusion in Theorem~\ref{thm:main_0} when $d=p^\ell$.

\begin{theorem}
\label{thm:converse_ii}
Let $L$ be a field of characteristic $p$, let $\alpha_1,\alpha_2,\beta\in L$ with $\alpha_1\ne\alpha_2$, and let $d=p^\ell$ for some $\ell\in\N$. For each $\l\in\Lbar$, we let $f_\l(z)=z^d+\l$. Consider the set
$$C(\alpha_1,\alpha_2;\beta)=\left\{\l\in\Lbar\colon\text{ there exist $m,n\in\N$ such that $f_\l^m(\alpha_1)=f_\l^n(\alpha_2)=\beta$}\right\}$$
and let $\delta_1\colonequals \alpha_2-\alpha_1$ and $\delta_2 \colonequals \beta-\alpha_1$. Assume there exists a finite subfield $\Fq\subseteq L$ such that 
\begin{equation}
\label{eq:790}
\delta_1\in\Fq^\ast\text{ and }\delta_2\in\Fq. 
\end{equation}
Then the set $C(\alpha_1,\alpha_2;\beta)$ is infinite if  the system of two equations:
\begin{equation}
\label{eq:system_10}
\left\{\begin{array}{ccc}
\delta_1 & = & \sum_{i=0}^{s_1-1}\gamma^{p^{ik\ell}}\\
\\
\delta_2 & = & \sum_{i=0}^{s_2-1}\gamma^{p^{ik\ell}}\end{array}\right.
\end{equation}
has a solution  $(\gamma,k,s_1,s_2)\in\Fq^\ast\times\N\times\N\times\N$. Moreover, the set $C(\alpha_1,\alpha_2;\beta)$ is empty if the system~\eqref{eq:system_10} has no solution $(\gamma,k,s_1,s_2)\in\Fq^\ast\times\N\times \N\times\N$.
\end{theorem}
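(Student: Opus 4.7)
The plan is to prove the two parts separately: the direct implication (a system solution implies $C$ is infinite) by an explicit construction, and the ``moreover'' statement by a converse extraction.

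For the direct implication, the driving observation is that the $k$-th iterate $g := f_\lambda^k$ is of the same polynomial form
\[
g(z) = z^{p^{k\ell}} + \mu, \qquad \mu := \sum_{i=0}^{k-1}\lambda^{p^{i\ell}},
\]
so $g$ belongs to the same family with $\ell$ replaced by $k\ell$. I would specialize $\mu$ to $\mu_\ast := \alpha_1 - \alpha_1^{p^{k\ell}} + \gamma$; a telescoping induction using $(x+y)^{p^N} = x^{p^N}+y^{p^N}$ in characteristic $p$ then yields
\[
g^s(\alpha_1) = \alpha_1 + \sum_{i=0}^{s-1}\gamma^{p^{ik\ell}}, \qquad g^s(\alpha_2) = \alpha_1 + \delta_1^{p^{sk\ell}} + \sum_{i=0}^{s-1}\gamma^{p^{ik\ell}}
\]
for every $s\ge 0$. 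Taking $s=s_2$ in the first identity and using \eqref{eq:system_10} gives $g^{s_2}(\alpha_1) = \alpha_1 + \delta_2 = \beta$. Assuming WLOG $s_2\ge s_1$ (the opposite case reduces to this via Lemma~\ref{lem:actually}), taking $s=s_2-s_1$ in the second identity and combining with the consequence $\delta_1^{p^{(s_2-s_1)k\ell}} = \sum_{i=s_2-s_1}^{s_2-1}\gamma^{p^{ik\ell}}$ of \eqref{eq:system_10} (obtained by raising the first equation to the $p^{(s_2-s_1)k\ell}$-th power and reindexing) likewise gives $g^{s_2-s_1}(\alpha_2) = \beta$. Setting $m := ks_2$ and $n := k(s_2-s_1)$, every $\lambda\in\Lbar$ satisfying $\sum_{i=0}^{k-1}\lambda^{p^{i\ell}} = \mu_\ast$ then lies in $C(\alpha_1,\alpha_2;\beta)$.

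To produce infinitely many such $\lambda$, I would exploit the fact that $(\gamma, k+jr, s_1, s_2)$ is also a solution of \eqref{eq:system_10} for every $j\ge 0$, where $r\ge 1$ is the smallest integer with $\gamma^{p^{r\ell}} = \gamma$ (such $r$ exists since $\gamma\in\Fq^\ast$): indeed $\gamma^{p^{i(k+jr)\ell}} = \gamma^{p^{ik\ell}}$ because $\gamma$ has period $r$ under the Frobenius $\tau\colon x\mapsto x^{p^\ell}$. For each $k' := k+jr$, the $\Fp$-linearized equation $\sum_{i=0}^{k'-1}\lambda^{p^{i\ell}} = \alpha_1 - \alpha_1^{p^{k'\ell}} + \gamma$ has exactly $p^{(k'-1)\ell}$ solutions in $\Lbar$, namely a coset of the kernel of a separable additive polynomial of degree $p^{(k'-1)\ell}$. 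Since these cardinalities grow without bound, the union of the resulting solution sets cannot lie in any finite subset of $\Lbar$, so $C(\alpha_1,\alpha_2;\beta)$ is infinite.

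For the ``moreover'' statement, I would argue contrapositively. Given any $\lambda\in C(\alpha_1,\alpha_2;\beta)$ realizing $f_\lambda^m(\alpha_1) = f_\lambda^n(\alpha_2) = \beta$ with WLOG $m>n$, set $\gamma_0 := \lambda + \alpha_1^{p^\ell} - \alpha_1 \in \Lbar$. The same telescoping gives
\[
\sum_{i=0}^{m-n-1}\gamma_0^{p^{i\ell}} = \delta_1 \qquad\text{and}\qquad \sum_{i=0}^{m-1}\gamma_0^{p^{i\ell}} = \delta_2,
\]
so $(\gamma_0, 1, m-n, m)$ is a ``formal'' solution of \eqref{eq:system_10}---but a priori $\gamma_0\in\Lbar$ rather than $\Fq^\ast$. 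The main obstacle is to promote $\gamma_0$ to a genuine $\gamma\in\Fq^\ast$. Since $\gamma_0$ is a root of $\sum_i X^{p^{i\ell}} - \delta_1\in\Fq[X]$, it lies in some finite extension $\mathbb{F}_{q^t}\subseteq\Lbar$; moreover, the Frobenius $\phi\colon x\mapsto x^q$ fixes $\delta_1,\delta_2\in\Fq$, so each conjugate $\phi^j(\gamma_0)$ again yields a $\Lbar$-solution. I anticipate producing the required $\Fq^\ast$-solution by a Galois-averaging argument over $\mathrm{Gal}(\Fq(\gamma_0)/\Fq)$, coupled with a judicious choice of $k$ so that a suitable partial Frobenius-sum built from $\gamma_0$ is $\phi$-fixed, together with a compatible rescaling of $s_1, s_2$; the non-vanishing of $\gamma$ will follow from $\delta_1\ne 0$. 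This Galois-combinatorial bookkeeping, rather than any new dynamical input, is the principal technical challenge.
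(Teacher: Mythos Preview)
Your direct implication is essentially the paper's argument: construct $\lambda$ from the equation $f_\lambda^k(\alpha_1)=\alpha_1+\gamma$, then replace $k$ by $k+jr$ to generate infinitely many. One small correction: the appeal to Lemma~\ref{lem:actually} for ``WLOG $s_2\ge s_1$'' is not quite apt, since that lemma starts from a point already in $C$. What you actually need when $s_2\le s_1$ is periodicity: from $f_\lambda^k(\alpha_1)=\alpha_1+\gamma$ with $\gamma\in\Fpbar$, Lemma~\ref{lem:prep_0} gives that $\alpha_1$ is periodic, and then $\alpha_1,\alpha_2,\beta$ all lie on the same cycle, so $\beta$ is reached from $\alpha_2$ in positive time.

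For the ``moreover'' direction there is a genuine gap. Your $\gamma_0=f_\lambda(\alpha_1)-\alpha_1$ and the observation that it lies in $\Fpbar$ are fine, but the proposed ``Galois averaging over $\mathrm{Gal}(\Fq(\gamma_0)/\Fq)$'' is a red herring: the trace of $\gamma_0$ has no reason to satisfy the system for any $(k,s_1,s_2)$. The paper's key move, which you are circling around with ``judicious choice of $k$'' and ``partial Frobenius-sum'', is simply to take
\[
k:=\min\Bigl\{s\ge 1:\ f_\lambda^s(\alpha_1)-\alpha_1=\sum_{i=0}^{s-1}\gamma_0^{p^{i\ell}}\in\Fq\Bigr\},
\qquad \gamma:=f_\lambda^k(\alpha_1)-\alpha_1\in\Fq.
\]
Such $k$ exists because $m-n$ and $m$ already have this property. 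The crucial step you are missing is a Euclidean-division lemma: if $s=ka+r$ with $0\le r<k$, then
\[
f_\lambda^{s}(\alpha_1)-\alpha_1=\bigl(f_\lambda^r(\alpha_1)-\alpha_1\bigr)+\bigl(f_\lambda^{ka}(\alpha_1)-\alpha_1\bigr)^{p^{r\ell}},
\]
and the second term lies in $\Fq$ (being a sum of $p^{jk\ell}$-th powers of $\gamma$); hence if the left side is in $\Fq$, so is $f_\lambda^r(\alpha_1)-\alpha_1$, forcing $r=0$ by minimality. This gives $k\mid(m-n)$ and $k\mid m$, whence $s_1=(m-n)/k$, $s_2=m/k$, and the recursion $u_{a+1}=u_a^{p^{k\ell}}+\gamma$ yields the system. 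Nonvanishing of $\gamma$ follows as you say: $\gamma=0$ would force $\alpha_1$ to have period dividing $k$, hence $\delta_1=f_\lambda^{ks_1}(\alpha_1)-\alpha_1=0$. No Galois machinery is needed; it is a pigeonhole-and-remainder argument in disguise.
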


\begin{remark}
It is interesting that under the hypotheses~\eqref{eq:790} of Theorem~\ref{thm:converse_ii}, either $C(\alpha_1,\alpha_2;\beta)$ is infinite or empty. Also, in this case, for \emph{each} $\l\in C(\alpha_1,\alpha_2;\beta)$, we have that $\alpha_1,\alpha_2,\beta$ are \emph{all} preperiodic under the action of $f_\l$ (see Lemmas~\ref{lem:prep_0}~and~\ref{lem:prep}).

On the other hand, under the assumption that $d=p^\ell$, it could be that $C(\alpha_1,\alpha_2;\beta)$ is finite and nonempty, but this can only occur if either $\alpha_1-\alpha_2$ or $\alpha_1-\beta$ is \emph{transcendental} over $\Fp$.
\end{remark}

The following result shows that condition~(A) in Theorem~\ref{thm:main_0} implies the existence of infinitely many parameters $\l\in C(\alpha_1,\alpha_2;\beta)$ (see~\eqref{eq:0001}).

\begin{theorem}
\label{thm:converse_i}
Let $L$ be a field of characteristic $p$, let $\alpha,\beta\in L$, let $d\ge 2$ be an integer, and let $f_\l(z)=z^d+\l$ be a family of polynomials parameterized by $\l\in\Lbar$. Then there exist infinitely many $\l\in\Lbar$ such that  $f_\l^m(\alpha)=\beta$ for some $m\in\mathbb{N}$ (where $m$ depends on $\lambda$).
\end{theorem}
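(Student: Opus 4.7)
The plan is to study, for each $n \geq 1$, the polynomial
\[
P_n(\lambda) \colonequals f_\lambda^n(\alpha) - \beta \in L[\lambda],
\]
and its zero set $\Lambda_n = \{\lambda \in \Lbar : f_\lambda^n(\alpha) = \beta\}$. A routine induction using $f_\lambda^{n+1}(\alpha) = f_\lambda^n(\alpha)^d + \lambda$ shows $P_n$ is monic of degree $d^{n-1}$, so each $\Lambda_n$ is nonempty, and the task is to prove $\bigcup_{n \geq 1}\Lambda_n$ is infinite. The approach is a proof by contradiction: if this union is a finite set $\{\lambda_1,\dots,\lambda_N\}$, then each $P_n = \prod_j (\lambda-\lambda_j)^{a_j(n)}$ with $\sum_j a_j(n) = d^{n-1}$, and by pigeonhole some fixed $\lambda_{j^*}$ occurs as a root of $P_n$ with multiplicity $a_{j^*}(n) \geq d^{n-1}/N$ for infinitely many $n$. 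I would then aim to bound $a_{j^*}(n)$ independently of $n$, yielding the desired contradiction.

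Writing $g_n(\lambda) = f_\lambda^n(\alpha)$ and formally differentiating the recursion $g_{n+1} = g_n^d + \lambda$ gives $g_{n+1}'(\lambda) = d\, g_n(\lambda)^{d-1} g_n'(\lambda) + 1$. When $p \mid d$, this collapses to $g_{n+1}' = 1$ (since $d = 0$ in $\Lbar$ and $g_1' = 1$); hence $P_n'(\lambda) = 1$ identically, each $P_n$ is separable, and every root has multiplicity exactly $1$, immediately contradicting $a_{j^*}(n) \to \infty$.

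When $p \nmid d$ the polynomials $P_n$ need not be separable, and the plan is to expand $g_n(\lambda_{j^*}+u) = \sum_{k\geq 0} H_{n,k} u^k$ as a formal Taylor series, so that $a_{j^*}(n)$ is the smallest $k \geq 1$ with $H_{n,k} \neq 0$ (when $H_{n,0} = \beta$). Substituting into the recursion yields explicit formulas for each $H_{n+1,k}$ in terms of $(H_{n,0},\dots,H_{n,k})$ and $\lambda_{j^*}$; in particular $H_{n+1,1} = d\, H_{n,0}^{d-1} H_{n,1} + 1$, which equals $1$ whenever $H_{n,0} = \beta$ and $H_{n,1} = 0$. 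So whenever $a_{j^*}(n) \geq 2$ one gets $a_{j^*}(n+1) \leq 1$, and iterating this analysis at higher orders (and distinguishing the subcases $\lambda_{j^*} = \beta - \beta^d$ and $\lambda_{j^*} \neq \beta - \beta^d$) should give a bound on $a_{j^*}(n)$ depending only on $\alpha,\beta,d,\lambda_{j^*}$, again contradicting $a_{j^*}(n)\to\infty$. The hard part of the plan will be this bookkeeping for higher Hasse--Schmidt derivatives when $p \nmid d$: showing the algebraic conditions $H_{n,1} = H_{n,2} = \cdots = H_{n,K-1} = 0$ cannot simultaneously hold for $K$ arbitrarily large, which amounts to showing that the trajectory $(H_{n,0},H_{n,1},\dots)$ under the induced recursion cannot pass through the point $(\beta,0,0,\dots,0)$ to arbitrarily high order.
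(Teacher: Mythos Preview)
Your argument for the case $p\mid d$ is clean and actually stronger than the paper's: the paper treats only $d=p^\ell$ using the explicit additive formula, while your derivative computation $g_{n+1}'=d\,g_n^{d-1}g_n'+1\equiv 1$ shows every $P_n$ is separable whenever $p\mid d$, immediately forcing $\bigcup_n\Lambda_n$ to be infinite.

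For $p\nmid d$, however, your plan has a real gap. The observation ``$a_{j^*}(n)\ge 2\Rightarrow a_{j^*}(n+1)\le 1$'' is correct but is a statement about \emph{consecutive} indices; it does nothing to bound $a_{j^*}(n)$ at a single $n$. You could perfectly well have $a_{j^*}(n)$ enormous, then $a_{j^*}(n+1)\le 1$, then $a_{j^*}(n+2)$ enormous again, and so on. Working backwards is no better: from $g_n=g_{n-1}^d+\lambda$, the condition $a_{j^*}(n)\ge K$ translates into $K-1$ polynomial constraints on the Taylor coefficients $H_{n-1,1},\dots,H_{n-1,K-1}$, but these coefficients are whatever the dynamics produces and there is no evident mechanism preventing them from satisfying those constraints for arbitrarily large $K$. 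The ``bookkeeping'' you defer is not routine: it amounts to proving an effective bound on root multiplicities of iterated polynomials in positive characteristic, which is not obviously true and would itself be a theorem of independent interest.

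The paper sidesteps this entirely. Assuming the root set is finite, it writes $P_{m,\alpha}(u)-\beta=\prod_i(u-\lambda_i)^{e_{m,i}}$ and uses the recursion $P_{m+1}=P_m^d+u$ to interpret the pairs $\bigl(P_m(u)-\beta,\,P_{m+1}(u)-\beta\bigr)$ as infinitely many points of a finitely generated subgroup $\Gamma\subset\mathbb{G}_m^2\bigl(L(u)\bigr)$ lying on the irreducible curve $(x+\beta)^d=y+(\beta-u)$. The Moosa--Scanlon structure theorem for $V\cap\Gamma$ then forces the degrees $d^{m-1}$ to lie in finitely many sets of the shape $\{A_0+A_1 p^{kn}:n\in\N\}$, which is impossible for infinitely many $m$ precisely because $d$ is not a power of $p$. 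So the paper trades your unproven multiplicity bound for a deep structural input, and the argument goes through.
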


Indeed, under the condition~(A) from Theorem~\ref{thm:main_0} that $\alpha_1^d=\alpha_2^d$, we have that $f_\l^n(\alpha_1)=f_\l^n(\alpha_2)$ for \emph{all} $n\in\N$. Applying Theorem~\ref{thm:converse_i} to starting point $\alpha_1$ and target point $\beta$, we obtain that the set $C(\alpha_1,\alpha_2;\beta)$ from equation~\eqref{eq:0001} must be infinite. 

In Section~\ref{sec:i}, we prove Theorem~\ref{thm:converse_i} by contradiction. Using the assumption that the equations in $\l$ of the form $f_\l^m(\alpha)=\beta$ (as we vary $m$) have only finitely many solutions, we obtain that a certain plane curve (see equation~\eqref{eq:105}) contains \emph{infinitely} many points from a suitable finitely generated subgroup of $\mathbb{G}_m^2$. This allows us to apply the main result of Moosa-Scanlon \cite{F} on the structure of the intersection between a subvariety of a torus (in characteristic $p$) with a finitely generated subgroup to derive a contradiction.

\begin{remark}
\label{rem:fpbar}
We emphasize that both Theorems~\ref{thm:converse_i}~and~\ref{thm:converse_ii} also hold when $\alpha_1,\alpha_2,\beta\in\Fpbar$, i.e., the converse implication in Theorem~\ref{thm:main_0} does not depend on whether all three points $\alpha_1,\alpha_2,\beta$ are in $\Fpbar$.
\end{remark}


\subsection{A general conjecture}
\label{subsec:conjecture}

It is natural to consider a general colliding orbits problem for arbitrary families of polynomials in normal form. A polynomial of degree $d\ge 2$ is in \emph{normal form} if it is monic and its coefficient for the monomial $x^{d-1}$ is $0$. In Conjecture~\ref{conj:general}, we allow both the starting points $\alpha_1,\alpha_2$ and the target point $\beta$ to vary in an algebraic family as well.

\begin{conjecture}
\label{conj:general}
Let $L$ be a field of characteristic $p$, and let $\alpha_1(z),\alpha_2(z),\beta(z)\in L[z]$. Suppose $f_\l(x)\in L[x]$ is a family of polynomials of degree $d\ge 2$ (parameterized by $\l\in \Lbar$) in normal form, i.e.
\begin{equation}
\label{eq:conj_form}
f_\l(x)=x^d + \sum_{i=0}^{d-2}c_i(\l)\cdot x^i,
\end{equation}
for some polynomials $c_i(z)\in L[z]$ for $i=0,\dots, d-2$. We let 
$$C(\alpha_1,\alpha_2;\beta)=\left\{\l\in\Lbar\colon\text{ $f_\l^m(\alpha_1(\l))=\beta(\l)$ and $f_\l^n(\alpha_2(\l))=\beta(\l)$ for some $m,n\in\N$}\right\}.$$ 
If $C(\alpha_1,\alpha_2;\beta)$ is infinite, then at least one of the following conditions must hold:
\begin{enumerate}[label=(\arabic*)]
\item\label{conj:item-1} there exists a family of polynomials $g_\l(x)$ (similar to \eqref{eq:conj_form}, but not necessarily normalized) and there exist integers $k>0$ and $m,n\ge 0$ such that
\begin{equation} 
\label{eq:conj_1}
f_\l^k\circ g_\l=g_\l\circ f_\l^k\text{ and }f_\l^m(\alpha_1(\l))=g_\l\left(f_\l^n(\alpha_2(\l))\right)\text{ (or $f_\l^m(\alpha_2(\l))=g_\l\left(f_\l^n(\alpha_1(\l))\right)$),}
\end{equation}
for all $\l\in \Lbar$.
\item\label{conj:item-2} there exists $k\in\N$ such that for some $j\in\{1,2\}$, we have that $f_\l^k(\alpha_j(\l))=\beta(\l)$ for \emph{all} $\l\in\Lbar$. 
\item\label{conj:item-3} for \emph{each} $\l\in \Lbar$, the polynomial $\tilde{f}_\l(x)\colonequals f_\l(x)-c_0(\l)$ is additive (i.e., $\tilde{f}_\l(x+y)=\tilde{f}_\l(x)+\tilde{f}_\l(y)$ for all $x,y$). Furthermore,  for each $\l\in\Lbar$, we have that $\delta_1(\l)\colonequals\alpha_2(\l)-\alpha_1(\l)$ and $\delta_2(\l)\colonequals\beta(\l)-\alpha_1(\l)$ are preperiodic under the action of $\tilde{f}_\l(x)$.
\item\label{conj:item-4} $c_i(z)\in\Fpbar[z]$ for $i=0,\dots, d-2$ and $\alpha_1(z),\alpha_2(z),\beta(z)\in\Fpbar[z]$.
\end{enumerate}
\end{conjecture}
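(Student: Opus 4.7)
The plan is to follow the architecture of the proof of Theorem~\ref{thm:main_0}, suitably generalized to arbitrary normal-form families. As a first reduction, I would isolate the case where alternative~\ref{conj:item-4} fails, that is, where some coefficient $c_i(z)$ or some section $\alpha_1(z), \alpha_2(z), \beta(z)$ has positive transcendence degree over $\Fpbar$. I would then split further according to whether the iteration exponents $m, n$ realizing the collisions $f_\l^m(\alpha_1(\l))=f_\l^n(\alpha_2(\l))=\beta(\l)$ remain bounded as $\l$ runs through $C(\alpha_1,\alpha_2;\beta)$: if either sequence of exponents is bounded, a rigidity argument along an infinite subsequence would upgrade pointwise identities to a functional identity $f_\l^k(\alpha_j(\l))=\beta(\l)$ valid for every $\l\in\Lbar$, landing in alternative~\ref{conj:item-2}.

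In the contrary case, both iteration exponents tend to infinity along an infinite sequence $\{\l_k\}\subseteq C(\alpha_1,\alpha_2;\beta)$. Combining the identity $d^m\hhat_{f_\l}(\alpha_j(\l)) = \hhat_{f_\l}(\beta(\l))$ with a Call--Silverman style variation of canonical heights (analogous to Proposition~\ref{prop:useful_variation}) yields
\[
\lim_{k\to\infty}\hhat_{f_{\l_k}}(\alpha_1(\l_k))=\lim_{k\to\infty}\hhat_{f_{\l_k}}(\alpha_2(\l_k))=0.
\]
An equidistribution-based input generalizing Theorem~\ref{thm:4.1} should then promote this small-height information to the global identity $\hhat_{f_\l}(\alpha_1(\l)) = \hhat_{f_\l}(\alpha_2(\l))$ valid for every $\l\in\Lbar$, which is the rigid dynamical relation that must ultimately be exploited.

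With that identity in hand, I would apply a Medvedev--Scanlon-type classification of preperiodic subvarieties of $\bA^2$ under the split map $(x,y)\mapsto(f_\l(x), f_\l(y))$: the algebraic locus cut out by $(\alpha_1(\l), \alpha_2(\l))$ as $\l$ varies must be invariant under a common iterate, forcing either a graph relation $y = g_\l(x)$ with $f_\l^k\circ g_\l = g_\l\circ f_\l^k$ (alternative~\ref{conj:item-1}) or, in characteristic $p$ only, a relation coming from an underlying additive structure (alternative~\ref{conj:item-3}). Combining the resulting commuting polynomial $g_\l$ with the location of $\beta(\l)$ through a Theorem~\ref{thm:converse_i}-style construction would then produce integer solutions analogous to the system~\eqref{eq:system_010}, thereby completing the classification.

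The main obstacle I foresee is the characteristic-$p$ Medvedev--Scanlon classification, especially in the presence of additive families: one needs to describe the invariant curves of $f_\l\times f_\l$ uniformly in the parameter, accounting for Frobenius twists of the sections $\alpha_j(\l)$ that have no characteristic-$0$ analogue. A secondary difficulty is isotriviality, since an isotrivial degeneration of the family can produce hidden collisions that must be correctly absorbed into alternative~\ref{conj:item-4} rather than treated as genuine outliers; distinguishing these degenerations from true non-isotrivial behaviour is precisely where the characteristic-$p$ framework diverges from its characteristic-$0$ counterpart, and it is where I expect the bulk of the technical work in any complete proof to lie.
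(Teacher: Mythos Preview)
The statement you are addressing is a \emph{conjecture}, not a theorem: the paper does not prove it and explicitly says it expects Conjecture~\ref{conj:general} to be ``very difficult.'' So there is no proof in the paper to compare against; your proposal is a strategy for an open problem, not an alternative to an existing argument.

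Your outline does track the paper's own heuristic for how such a proof might go (see especially Remark~\ref{subsec:char_0}, where the authors sketch essentially the same sequence of steps for the characteristic-$0$ analogue). But the paper is clear about where the strategy breaks in characteristic $p$, and this is exactly the step you treat as a black box. After obtaining the equality of canonical heights $\hhat_{f_\l}(\alpha_1(\l))=\hhat_{f_\l}(\alpha_2(\l))$ for all $\l$, you appeal to a ``Medvedev--Scanlon-type classification'' to extract the relation in alternative~\ref{conj:item-1} or~\ref{conj:item-3}. The paper singles out precisely this passage as the main obstruction: there is no known generalization of Theorem~\ref{thm:5.1} (the step that, for $f_\l(z)=z^d+\l$, converts equal local canonical heights into the explicit relation $\alpha_1^d=\alpha_2^d$) to arbitrary normal-form families in characteristic $p$. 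The Medvedev--Scanlon theorem is a characteristic-$0$ result, and the tools it relies on (Ritt's classification of polynomial decompositions, among others) do not survive the presence of Frobenius and additive polynomials. You flag this as a difficulty, but it is more than that---it is the entire content of the conjecture beyond the special case handled in Theorem~\ref{thm:main_0}. A secondary gap: the equidistribution input you invoke to upgrade the small-height sequence to a global identity (your generalization of Theorem~\ref{thm:4.1}) is also specific to the family $z^d+\l$ in the reference~\cite{1} and would itself need to be established for arbitrary families over function fields of characteristic $p$.
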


\begin{remark}
\label{rem:simplified_form}
One could formulate Conjecture~\ref{conj:general} for an arbitrary (unnormalized) family of polynomials $f_\l\in L[\l][x]$ (of degree $d\ge 2$), but this would complicate condition~\ref{conj:item-4}. When $p\nmid d$, this simplification has no loss of generality: any such family can be normalized through a linear conjugation, at the expense of parameterizing $\l$  by a curve rather than the affine line. 
\end{remark}

We briefly discuss the statements~\ref{conj:item-1}-\ref{conj:item-4} in the conclusion of Conjecture~\ref{conj:general}. Statement~\ref{conj:item-1} is a significant generalization of conclusion~(A) in Theorem~\ref{thm:main_0}; for arbitrary families of polynomials~\eqref{eq:conj_form}, the starting points $\alpha_1(\l)$ and $\alpha_2(\l)$ may be dynamically related through the much more complicated relation~\eqref{eq:conj_1} from Conjecture~\ref{conj:general} (see also \cite[Theorem~1.3]{Matt-2}). It is likely that to obtain a converse statement in Conjecture~\ref{conj:general}, i.e., that the set $C(\alpha_1,\alpha_2;\beta)$ is infinite, one would need a stronger statement than~\ref{conj:item-1}.

Statement~\ref{conj:item-2} does not appear for the dynamical system considered in Theorem~\ref{thm:main_0}. However, for general dynamical systems, one needs to account for the possibility that $\beta(\l)$ is in the forward orbit of $\alpha_1(\l)$ or $\alpha_2(\l)$ (for \emph{all} $\l\in\Lbar$). If this were to happen, one would expect the corresponding set $C(\alpha_1,\alpha_2;\beta)$ to be infinite due to a possible extension of our Theorem~\ref{thm:converse_i}. 

Statement~\ref{conj:item-3} asks that the only monomials $x^i$ in $f_\l(x)$ (for $i>0$) appearing with a nonzero coefficient correspond to $i=p^j$ for some $j\ge 0$; this is the generalization of conclusion~(B) appearing in Theorem~\ref{thm:main_0}. Again, a converse to Conjecture~\ref{conj:general} would require a more refined version of statement~\ref{conj:item-3} (see the system~\eqref{eq:system_010} from Theorem~\ref{thm:main_0} for $f_\l(z)=z^d+\l$).

Finally, we expect statement~\ref{conj:item-4} from Conjecture~\ref{conj:general} yields that  $C(\alpha_1,\alpha_2;\beta)$ is infinite as long as the family $f_\l(z)$ does \emph{not} satisfy statement~\ref{conj:item-3} (similar to Conjecture~\ref{conf:fpbar} for the special family of polynomials $f_\l(z)=z^d+\l$) \emph{and} also assuming that neither $\alpha_1$ nor $\alpha_2$ is \emph{persistently preperiodic} for our family of polynomials (i.e., for $j=1,2$,  there exists no $0\le m<n$ such that $f_\l^m(\alpha_j(\l))=f_\l^n(\alpha_j(\l))$ for \emph{all} $\l\in\Lbar$). In Subsection~\ref{subsec:9.4}, we present some numerical evidence supporting our expectation.

We expect Conjecture~\ref{conj:general} to be \emph{very difficult}. The main obstacle is the lack of a generalization of Theorem~\ref{thm:5.1} (obtained in \cite{1} for $f_\l(z)\colonequals z^d+\l$) to arbitrary families of polynomials (so far, \cite{22} provides the most general known extension of the main result from \cite{1} to families of polynomials of degrees coprime to the characteristic of the given field). This difficulty, in turn, stems from the fact that some of the key tools available in characteristic $0$ for determining the \emph{exact dynamical relation} between points of equal canonical height (with respect to the given family of polynomials) do not exist in characteristic $p$. 

\begin{remark}
\label{subsec:char_0}
We expect that the analogue of Conjecture~\ref{conj:general} for dynamical systems over a field $L$ of characteristic $0$ is more manageable and would only lead to the conclusions~\ref{conj:item-1}~and~\ref{conj:item-2}. The general strategy would follow the steps from our paper. While there would be no complications from additive polynomials (which are nontrivial only in characteristic $p$), new technical challenges would arise from dealing with arbitrary polynomial families. A potential line of attack would be to use the variation of the canonical height in families, as proven by Ingram \cite{Patrick}, to generalize \eqref{eq:to_0} when $C(\alpha_1,\alpha_2;\beta)$ is infinite. Then using \cite[Theorem~1.3]{Matt-2}, together with the description of the periodic plane curves under the coordinatewise action of polynomials from \cite{Alice-Tom}, one should be able to derive the conclusions~\ref{conj:item-1}-\ref{conj:item-2} from Conjecture~\ref{conj:general}. There are further complications when the dynamical system $(f_\l,\alpha_1(\l),\alpha_2(\l),\beta(\l))$ is not defined over $\overline{\Q}$. However, we believe these can be overcome using the methods from \cite[Section~10]{GHT-ANT} and the description of points of canonical height $0$ from \cite{Rob} for a polynomial defined over a function field. We hope to return to this general question for dynamical systems over fields of characteristic $0$ in a sequel paper.
\end{remark}


\section{Preliminary results}\label{sec:preliminary}

Let $L$ be a field of characteristic $p$ with a fixed algebraic closure $\Lbar$. We consider the family of polynomials $f_\l(z)=z^d+\l$ parameterized by $\l\in\Lbar$.

This Section is divided into two parts. In Subsection~\ref{subsec:1}, we gather some information about the iterates of $f_\l(z)$. In Subsection~\ref{subsec:trdeg}, we work under the hypotheses of Theorem~\ref{thm:main} for the points $\alpha_1,\alpha_2,\beta\in L$ (i.e., assuming the set $C(\alpha_1,\alpha_2;\beta)$ from~\eqref{eq:1} is infinite) to obtain information regarding the transcendence degree of $\Fp(\alpha_1,\alpha_2,\beta)/\Fp$.


\subsection{Iterates of our family of polynomials}
\label{subsec:1}

Let $\alpha\in L$. Following \cite[Subsection~3.1]{1}, for each $n\in\N$, there exists a polynomial $P_{n,\alpha}(\l)\in L[\l]$ of degree $d^{n-1}$ such that
\begin{equation}
\label{eq:2}
P_{n,\alpha}(\l)=f_\l^n(\alpha)\text{ for each }\l\in\Lbar.
\end{equation} 

\begin{remark}
\label{rem:GCD_2}
Using the notation from \eqref{eq:2}, we can reformulate the hypotheses from Theorem~\ref{thm:main_0} (or from Theorem~\ref{thm:main}) that $C(\alpha_1,\alpha_2;\beta)$ is infinite as follows. Given $\alpha_1,\alpha_2,\beta\in L$, we are asking that there exist infinitely many $\l\in\Lbar$ such that for some $m,n\in\N$, $$P_{m,\alpha_1}(\l)-\beta=P_{n,\alpha_2}(\l)-\beta=0;$$
or alternatively, the set of roots for all  polynomials $\gcd\left(P_{m,\alpha_1}(x)-\beta,P_{n,\alpha_2}(x)-\beta\right)\in L[x]$ (as we vary $m,n\in\N$) is infinite. 
\end{remark}

A simple induction on $n$ yields the following result.
\begin{lemma}
\label{lem:poly}
For each $n\in\mathbb{N}$, the coefficients of the polynomial $P_{n,\alpha}(\lambda)$ are themselves polynomials in $\alpha$, that is, 
\begin{equation}
\label{eq:3}
P_{n,\alpha}(\l)=\sum_{i=0}^{d^{n-1}} c_{n,i}(\alpha)\l^{d^{n-1}-i}.
\end{equation}
Furthermore, we have the following more precise information:
\begin{itemize}
\item[(a)] $c_{n,0}(\alpha)=1$;
\item[(b)] $c_{n,d^{n-1}}(\alpha)=\alpha^{d^n}$;
\item[(c)] $\deg_\alpha(c_{n,i}(\alpha))\le d\cdot i$, for $i=0,\dots, d^{n-1}$.   
\end{itemize}
\end{lemma}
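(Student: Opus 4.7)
The plan is to prove Lemma~\ref{lem:poly} by a direct induction on $n$, using the recursive identity
\begin{equation*}
P_{n+1,\alpha}(\l)=f_\l\bigl(P_{n,\alpha}(\l)\bigr)=P_{n,\alpha}(\l)^d+\l,
\end{equation*}
which follows from $f_\l^{n+1}=f_\l\circ f_\l^n$ and the definition in~\eqref{eq:2}. The base case $n=1$ is immediate: $P_{1,\alpha}(\l)=\alpha^d+\l$, so $c_{1,0}(\alpha)=1$, $c_{1,1}(\alpha)=\alpha^d$, and both degree statements and (a), (b) hold.

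For the inductive step, I would assume the three properties for $n$ and expand $P_{n,\alpha}(\l)^d$ by the multinomial theorem. The coefficient of $\l^{d^n-j}$ in $P_{n,\alpha}(\l)^d$ is a finite sum, over tuples $(i_1,\ldots,i_d)$ with $0\le i_k\le d^{n-1}$ and $\sum_k i_k=j$, of the products $\prod_{k=1}^{d}c_{n,i_k}(\alpha)$. From this expansion:
\begin{itemize}
\item[(a)] The leading coefficient in $\l$ (i.e.\ $j=0$) is $c_{n,0}(\alpha)^d=1^d=1$, giving $c_{n+1,0}(\alpha)=1$.
\item[(b)] The constant term in $\l$ (i.e.\ $j=d^n$) forces every $i_k=d^{n-1}$, so it equals $c_{n,d^{n-1}}(\alpha)^d=(\alpha^{d^n})^d=\alpha^{d^{n+1}}$.
\item[(c)] For each admissible tuple, the inductive bound gives $\deg_\alpha\prod_k c_{n,i_k}(\alpha)\le\sum_k d\cdot i_k=d\cdot j$, so the coefficient of $\l^{d^n-j}$ has $\alpha$-degree at most $d\cdot j$.
\end{itemize}
Finally, the ``$+\l$'' in the recursion only alters the coefficient of $\l^{1}$, i.e.\ $c_{n+1,d^n-1}(\alpha)$, by adding the constant $1$; this does not disturb (a), (b), or the degree estimate $\deg_\alpha c_{n+1,d^n-1}(\alpha)\le d(d^n-1)$ (since $n\ge 1$).

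There is no real obstacle here; the argument is a routine bookkeeping induction. The only mildly delicate point is verifying that the ``$+\l$'' adjustment lands in a coefficient whose $\alpha$-degree bound is already at least $d\cdot(d^n-1)\ge 0$, so the added constant $1$ causes no violation of~(c). Everything else is a direct application of the multinomial expansion together with the inductive hypothesis.
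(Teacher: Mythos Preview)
Your proof is correct and follows essentially the same inductive argument as the paper's, using the recurrence $P_{n+1,\alpha}(\l)=P_{n,\alpha}(\l)^d+\l$ and tracking the leading, constant, and general coefficients. The only cosmetic difference is that the paper phrases part~(c) via a weighted-degree argument (weight $d$ for $\l$, weight $1$ for $\alpha$), which is exactly equivalent to your multinomial bound $\deg_\alpha\prod_k c_{n,i_k}(\alpha)\le d\sum_k i_k=dj$.
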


\begin{proof}
We prove that statements (a)-(c) hold by induction on $n$; the case $n=1$ is obvious since $f_\l(\alpha)=\alpha^d+\l$ and so, $c_{1,0}(\alpha)=1$, while $c_{1,1}(\alpha)=\alpha^d$.

Now, we assume the statements (a)-(c) hold for $c_{n,i}$ (for $0\le i\le d^{n-1}$) and we prove the same statements also hold for $c_{n+1,i}$ and $0\le i\le d^n$. We have 
\begin{equation}
\label{eq:0}
P_{n+1,\alpha}(\l)= P_{n,\alpha}(\l)^d + \l= \left(\sum_{i=0}^{d^{n-1}}c_{n,i}(\alpha) \l^{d^{n-1}-i}\right)^d+\l.
\end{equation}
By inspecting the expansion, $c_{n+1,0}(\alpha)=c_{n,0}(\alpha)^d$ and $c_{n+1,d^n}(\alpha)= c_{n,d^{n-1}}(\alpha)^d$. Hence, statements~(a)-(b) follow by the inductive hypothesis.  

Finally, regarding statement~(c), we assign weight $d$ to $\l$ and weight $1$ to $\alpha$. The total weight of each monomial (in $\alpha$ and $\l$) from $P_{n,\alpha}(\l)$ is therefore at most $d^n$ (using the inductive hypothesis for~(c) along with equation~\eqref{eq:3}). Then using the recurrence relation~\eqref{eq:0}, we conclude that each monomial in $P_{n+1,\alpha}(\l)$ has weight at most $d^{n+1}$; therefore, the degree of each $c_{n+1,i}(\alpha)$ is at most $d\cdot i$, as desired. \end{proof}

The following result is an easy consequence of Lemma~\ref{lem:poly}.
\begin{lemma}
\label{lem:finitely_l}
For each $\alpha,\beta\in L$ and for each $n\in\N$, there exist finitely many $\l\in\Lbar$ such that $f_\l^n(\alpha)=\beta$.
\end{lemma}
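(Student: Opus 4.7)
The plan is to reduce the statement to the elementary fact that a nonconstant single-variable polynomial has only finitely many roots in $\Lbar$. By the defining identity~\eqref{eq:2}, for each fixed $\alpha\in L$ and $n\in\N$, the condition $f_\lambda^n(\alpha)=\beta$ is equivalent to $P_{n,\alpha}(\lambda)-\beta=0$, where $P_{n,\alpha}\in L[\lambda]$ is the polynomial provided by Lemma~\ref{lem:poly}. Thus the set of $\lambda\in\Lbar$ for which $f_\lambda^n(\alpha)=\beta$ is precisely the zero set in $\Lbar$ of the single polynomial $P_{n,\alpha}(\lambda)-\beta\in L[\lambda]$, and the question reduces to showing that this polynomial is not identically zero.

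The key input is Lemma~\ref{lem:poly}(a): the leading coefficient $c_{n,0}(\alpha)$ of $P_{n,\alpha}(\lambda)$ is equal to $1$ (independent of $\alpha$). Since $n\in\N$, so $n\ge 1$, and since $d\ge 2$, we have $d^{n-1}\ge 1$, so $P_{n,\alpha}(\lambda)$ is a polynomial in $\lambda$ of degree exactly $d^{n-1}\ge 1$. Subtracting the constant $\beta$ does not affect the leading term, so $P_{n,\alpha}(\lambda)-\beta$ is still a nonconstant polynomial of degree $d^{n-1}$ in $L[\lambda]$, and hence has at most $d^{n-1}$ roots in the algebraic closure $\Lbar$.

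There is no real obstacle here; the statement is essentially a direct corollary of Lemma~\ref{lem:poly}(a), and the main purpose of recording it as a separate lemma is for future reference. It is worth emphasizing that the resulting bound $d^{n-1}$ on the cardinality of $\{\lambda\in\Lbar\colon f_\lambda^n(\alpha)=\beta\}$ is uniform in both $\alpha$ and $\beta$, depending only on $d$ and $n$, and that the argument makes no use of any transcendence or independence assumption on $\alpha$ and $\beta$. In particular, the infinitude of the set $C(\alpha_1,\alpha_2;\beta)$ in the definition~\eqref{eq:1} must be produced by letting $m$ and $n$ vary, which is what motivates the subsequent more delicate analysis.
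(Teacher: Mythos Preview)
Your proof is correct and follows essentially the same approach as the paper: the paper simply observes that $P_{n,\alpha}(\lambda)=\beta$ is a polynomial equation of degree $d^{n-1}$ in $\lambda$, hence has finitely many solutions. Your version is slightly more explicit in invoking Lemma~\ref{lem:poly}(a) to justify that the leading coefficient is nonzero, but the underlying argument is identical.
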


\begin{proof}
Since $P_{n,\alpha}(\l)=\beta$ (see~\eqref{eq:3}) is an equation of degree $d^{n-1}$ (in $\l$), there are only finitely many solutions $\lambda\in \Lbar$.
\end{proof}


\subsection{Transcendence degree for the field generated by our points}
\label{subsec:trdeg}

We let $\alpha_1,\alpha_2,\beta\in L$ and let $C(\alpha_1,\alpha_2;\beta)$ be the subset of $\Lbar$ (as in \eqref{eq:1}) consisting of all $\l$ for which there exist some $m,n\in\N$ such that
\begin{equation}
\label{eq:4}
f_\l^m(\alpha_1)=f_\l^n(\alpha_2)=\beta.
\end{equation}

\begin{lemma}
\label{lem:trdeg_lambda}
If $\l\in C(\alpha_1,\alpha_2;\beta)$, then $\l\in\overline{\Fp(\alpha_1,\beta)}\bigcap \overline{\Fp(\alpha_2,\beta)}.$
\end{lemma}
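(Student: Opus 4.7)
The plan is to unpack the definition of $C(\alpha_1,\alpha_2;\beta)$ and then invoke Lemma~\ref{lem:poly} to produce an explicit algebraic equation for $\l$ over each of the two fields $\Fp(\alpha_1,\beta)$ and $\Fp(\alpha_2,\beta)$. The argument is symmetric in $\alpha_1$ and $\alpha_2$, so I would only carry it out on one side and indicate the other is identical.

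First, I would note that $\l \in C(\alpha_1,\alpha_2;\beta)$ means there exist $m,n\in\N$ with $f_\l^m(\alpha_1)=\beta$ and $f_\l^n(\alpha_2)=\beta$. Using the polynomials $P_{n,\alpha}(\l)$ from~\eqref{eq:2}, the condition rewrites as $P_{m,\alpha_1}(\l)-\beta=0$ and $P_{n,\alpha_2}(\l)-\beta=0$.

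Next, by Lemma~\ref{lem:poly}, for any fixed $\alpha\in L$ and any $N\in\N$ the polynomial $P_{N,\alpha}(x)$ is a polynomial in $x$ whose coefficients are themselves polynomials in $\alpha$ with coefficients in $\Fp$, and, by part~(a) of that lemma, it is monic (in $x$) of positive degree $d^{N-1}\ge 1$. Consequently $P_{m,\alpha_1}(x)-\beta$ is a nonzero element of $\Fp[\alpha_1,\beta][x]\subseteq \Fp(\alpha_1,\beta)[x]$ having $\l$ as a root, which shows $\l$ is algebraic over $\Fp(\alpha_1,\beta)$, i.e.\ $\l\in\overline{\Fp(\alpha_1,\beta)}$. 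The same argument applied to $P_{n,\alpha_2}(x)-\beta$ shows $\l\in\overline{\Fp(\alpha_2,\beta)}$, giving the claim.

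There is no real obstacle here: the only substantive ingredient is the monicity statement of Lemma~\ref{lem:poly}(a), which guarantees that $P_{m,\alpha_1}(x)-\beta$ is not the zero polynomial even if $\alpha_1$ or $\beta$ happen to cause accidental cancellations among the other coefficients. This lemma is essentially a bookkeeping step, and its purpose in the larger argument is to let subsequent sections (in particular the transcendence-degree analysis of Subsection~\ref{subsec:trdeg}) reduce questions about the parameter $\l$ to questions about the field generated by the three fixed points $\alpha_1,\alpha_2,\beta$.
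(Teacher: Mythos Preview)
Your proposal is correct and follows essentially the same approach as the paper: both rewrite $f_\l^m(\alpha_1)=\beta$ as the polynomial equation $P_{m,\alpha_1}(\l)=\beta$ with coefficients in $\Fp(\alpha_1,\beta)$ (via Lemma~\ref{lem:poly}), observe this exhibits $\l$ as a root of a monic polynomial over that field, and then invoke symmetry for $\alpha_2$. Your version is slightly more explicit about why monicity matters (to ensure the polynomial is nonzero), but the argument is the same.
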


\begin{proof}
Writing $f_\l^m(\alpha_1)=\beta$ as $P_{m,\alpha_1}(\l)=\beta$ (see Lemma~\ref{lem:poly}), we obtain the following equation for $\l$:
\begin{equation}
\label{2-eq}
\l^{d^{m-1}}+\sum_{i=0}^{d^{m-1}}c_{m,i}(\alpha_1)\l^{d^{m-1}-i}=\beta.
\end{equation}
In particular, equation~\eqref{2-eq} implies that $\l\in \overline{\Fp(\alpha_1,\beta)}$. By symmetry, we also have $\l\in\overline{\Fp(\alpha_2,\beta)}$.
\end{proof}

\begin{lemma}
\label{lem:trdeg_not_3}
If $C(\alpha_1,\alpha_2;\beta)$ is nonempty, then $\alpha_1\in \overline{\Fp(\alpha_2,\beta)}$ and $\alpha_2\in \overline{\Fp(\alpha_1,\beta)}$. 
\end{lemma}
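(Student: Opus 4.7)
The plan is to fix any $\lambda\in C(\alpha_1,\alpha_2;\beta)$ and then exploit the fact that the polynomial identity $f_\lambda^m(\alpha_1)=\beta$ can be read in two ways: as a polynomial equation in $\lambda$ (which is how Lemma~\ref{lem:trdeg_lambda} is proved) or as a polynomial equation in $\alpha_1$. The key observation is that for each fixed $m\in\N$, the expression $f_\lambda^m(\alpha)$ viewed as a polynomial in the two variables $\alpha$ and $\lambda$ is monic of degree $d^m$ in the variable $\alpha$.

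More concretely, let $m,n\in\N$ be such that $f_\lambda^m(\alpha_1)=f_\lambda^n(\alpha_2)=\beta$. By Lemma~\ref{lem:trdeg_lambda}, $\lambda\in\overline{\Fp(\alpha_2,\beta)}$. I would then write $P_{m,\alpha_1}(\lambda)-\beta=0$ and instead of viewing this as a polynomial relation in $\lambda$, view it as a polynomial relation in the indeterminate $\alpha_1$ with coefficients in $\Fp[\lambda,\beta]$. Using Lemma~\ref{lem:poly}(b), the summand corresponding to $i=d^{m-1}$ contributes the monomial $\alpha_1^{d^m}$ (with coefficient $1$ independent of $\lambda$), while by Lemma~\ref{lem:poly}(c), every other summand $c_{m,i}(\alpha_1)\lambda^{d^{m-1}-i}$ with $0\le i<d^{m-1}$ has degree at most $d\cdot i<d^m$ in $\alpha_1$. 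Hence $P_{m,\alpha_1}(\lambda)-\beta$, regarded as a polynomial in $\alpha_1$ with coefficients in $\Fp[\lambda,\beta]$, is monic of degree $d^m$, and in particular nonzero. Thus $\alpha_1$ is algebraic over $\Fp(\lambda,\beta)$, so
\[
\alpha_1\in\overline{\Fp(\lambda,\beta)}\subseteq\overline{\Fp(\alpha_2,\beta)},
\]
where the last inclusion follows because $\lambda\in\overline{\Fp(\alpha_2,\beta)}$.

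The symmetric conclusion $\alpha_2\in\overline{\Fp(\alpha_1,\beta)}$ follows by exchanging the roles of $(\alpha_1,m)$ and $(\alpha_2,n)$. There is no real obstacle here; the only thing to verify carefully is the degree bound from Lemma~\ref{lem:poly}, which ensures that the leading monomial $\alpha_1^{d^m}$ cannot be cancelled by contributions from the other summands.
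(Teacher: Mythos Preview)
Your proof is correct and follows essentially the same approach as the paper: both arguments use Lemma~\ref{lem:trdeg_lambda} to place $\lambda$ in the appropriate algebraic closure, then read the equation $P_{m,\alpha}(\lambda)=\beta$ as a monic polynomial relation in $\alpha$ (via Lemma~\ref{lem:poly}(b),(c)) to conclude that $\alpha\in\overline{\Fp(\lambda,\beta)}$. The only cosmetic difference is that the paper derives $\alpha_2\in\overline{\Fp(\alpha_1,\beta)}$ first and then invokes symmetry, whereas you start with the other half.
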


\begin{proof}
Let $\l\in C(\alpha_1,\alpha_2;\beta)$, i.e., $\l$ satisfies equations~\eqref{eq:4}. Then Lemma~\ref{lem:trdeg_lambda} yields that 
\begin{equation}
\label{eq:7}
\l\in\overline{\Fp(\alpha_1,\beta)}. 
\end{equation}
Writing $f_\l^n(\alpha_2)=\beta$ as $P_{n,\alpha_2}(\l)=\beta$, i.e.,
\begin{equation}
\label{eq:5}
\l^{d^{n-1}}+c_{n,1}(\alpha_2)\l^{d^{n-1}-1}+\cdots + c_{n,d^{n-1}-1}(\alpha_2)\l+ \alpha_2^{d^n}=\beta,
\end{equation} 
where for each $i=1,\dots, d^{n-1}-1$, $c_{n,i}(\alpha_2)$ is a polynomial in $\alpha_2$ of degree at most $d\cdot i< d^n$ (according to Lemma~\ref{lem:poly}~(c)), we conclude that $\alpha_2\in \overline{\Fp(\l,\beta)}$. Then equation~\eqref{eq:7} yields the desired conclusion that $\alpha_2\in\overline{\Fp(\alpha_1,\beta)}$. By symmetry, we also have $\alpha_1\in\overline{\Fp(\alpha_2,\beta)}$. 
\end{proof}

\begin{lemma}
\label{lem:trdeg_1_2}
If $C(\alpha_1,\alpha_2;\beta)$ is infinite, then $\alpha_1\in\overline{\Fp(\alpha_2)}$ and $\alpha_2\in\overline{\Fp(\alpha_1)}$.
\end{lemma}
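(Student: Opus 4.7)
The plan is to leverage the infinitude of $C(\alpha_1,\alpha_2;\beta)$ in conjunction with the finiteness statement of Lemma~\ref{lem:finitely_l} and the algebraic relations of Lemmas~\ref{lem:trdeg_lambda}~and~\ref{lem:trdeg_not_3} in order to eliminate $\beta$ from those relations by playing off two parameters from $C(\alpha_1,\alpha_2;\beta)$ against each other.

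\textbf{Step 1: unboundedness of the iterate counts.} For each $\l\in C(\alpha_1,\alpha_2;\beta)$ we fix witnesses $(m_\l,n_\l)\in\N^2$ so that $f_\l^{m_\l}(\alpha_1)=f_\l^{n_\l}(\alpha_2)=\beta$. By Lemma~\ref{lem:finitely_l}, for every fixed $m\in\N$ only finitely many $\l\in\Lbar$ satisfy $f_\l^m(\alpha_1)=\beta$, so as $\l$ ranges over the infinite set $C(\alpha_1,\alpha_2;\beta)$ the values $m_\l$ must be unbounded, and the same holds for $n_\l$. Consequently, we can select two distinct elements $\l_1,\l_2\in C(\alpha_1,\alpha_2;\beta)$ whose associated witness pairs $(m_1,n_1)$ and $(m_2,n_2)$ satisfy $m_1\neq m_2$ and $n_1\neq n_2$.

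\textbf{Step 2: polynomial relations over $\Fp[\l_1,\l_2]$.} Equating the two expressions for $\beta$ coming from the $\alpha_1$-orbit, $P_{m_1,\alpha_1}(\l_1)=\beta=P_{m_2,\alpha_1}(\l_2)$, yields
\[
P_{m_1,\alpha_1}(\l_1)-P_{m_2,\alpha_1}(\l_2)=0.
\]
Viewed as a polynomial in the single variable $\alpha_1$ with coefficients in $\Fp[\l_1,\l_2]$, Lemma~\ref{lem:poly}(b)-(c) shows that each $P_{m_i,\alpha_1}(\l_i)$ is monic of degree $d^{m_i}$ in $\alpha_1$ with the remaining coefficients of strictly smaller degree. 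Because $m_1\neq m_2$, the difference is a nontrivial monic polynomial equation in $\alpha_1$, whence $\alpha_1\in\overline{\Fp(\l_1,\l_2)}$. The symmetric argument applied to $\alpha_2$ (using $n_1\neq n_2$) yields $\alpha_2\in\overline{\Fp(\l_1,\l_2)}$. Combining this with Lemma~\ref{lem:trdeg_lambda} and the identity $\beta=P_{m_1,\alpha_1}(\l_1)\in\Fp[\alpha_1,\l_1]$, we also obtain $\l_2\in\overline{\Fp(\alpha_1,\l_1)}$, and symmetrically $\l_2\in\overline{\Fp(\alpha_2,\l_1)}$.

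\textbf{The main obstacle.} The hardest step is the final elimination of the parameter $\l_1$: we must convert the relations above into algebraic dependencies between $\alpha_1$ and $\alpha_2$ alone. We anticipate that this requires selecting a third element $\l_3\in C(\alpha_1,\alpha_2;\beta)$---again available by infinitude---with yet another distinct iterate pair, and then eliminating $\l_1$ from the resulting overdetermined system via a resultant computation in $\Fp[\alpha_1,\alpha_2]$, thereby producing a nontrivial polynomial relation between $\alpha_1$ and $\alpha_2$ over $\Fp$. Equivalently, one argues that $\mathrm{trdeg}_{\Fp}\Fp(\alpha_1,\alpha_2)\le 1$, which is precisely the desired conclusion $\alpha_1\in\overline{\Fp(\alpha_2)}$ and $\alpha_2\in\overline{\Fp(\alpha_1)}$.
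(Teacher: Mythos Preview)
Your Steps~1 and~2 are correct, and you rightly identify that the elimination of the auxiliary parameters $\l_i$ is the crux. However, the proposal stops exactly where the real work begins, and the suggested completion via resultants does not go through as stated.

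The difficulty is this: even after introducing a third parameter $\l_3$ with distinct iterate counts, you obtain a system of polynomial relations among $\alpha_1,\alpha_2,\l_1,\l_2,\l_3$ (after eliminating $\beta$). For the resultant argument to yield a \emph{nontrivial} relation in $\Fp[\alpha_1,\alpha_2]$, you must verify that the elimination ideal is nonzero---equivalently, that the relevant resultants do not vanish identically as polynomials in $\alpha_1,\alpha_2$. You give no mechanism for checking this, and there is no a~priori reason it should hold: the iterate counts $(m_i,n_i)$ are attached to the specific $\l_i$ and vary with them, so there is no single fixed polynomial system to which a generic-dimension count applies. In effect, the non-vanishing of the resultant \emph{is} the content of the lemma, and the argument becomes circular at this point.

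The paper's proof takes a completely different route. It argues by contradiction: assuming $\alpha_1\notin L_1\colonequals\overline{\Fp(\alpha_2)}$, one chooses a place $v$ of the function field $L_1(\alpha_1,\beta)/L_1$ with $|\alpha_1|_v>1$ (while automatically $|\alpha_2|_v\le 1$). For $\l\in C(\alpha_1,\alpha_2;\beta)$ with large iterate counts $m,n$, the equation $f_\l^n(\alpha_2)=\beta$ forces $|\l|_v\le\max\{|\beta|_v,1\}^{1/d^{n-1}}$, whereas the equation $f_\l^m(\alpha_1)=\beta$ together with $|\alpha_1|_v>1$ and the degree bounds of Lemma~\ref{lem:poly}(c) forces $|\l|_v\ge|\alpha_1|_v^{d}$. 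These bounds are incompatible once $m,n$ are large enough (which Lemma~\ref{lem:finitely_l} guarantees). The valuation-theoretic input supplies precisely the quantitative leverage that pure algebraic elimination lacks.
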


\begin{proof}
Suppose $C(\alpha_1,\alpha_2;\beta)$ is infinite. Due to the symmetry between $\alpha_1$ and $\alpha_2$, it suffices to prove that $\alpha_1\in \overline{\Fp(\alpha_2)}$ (since an identical argument would yield $\alpha_2\in\overline{\Fp(\alpha_1)}$). 

We let $L_1\colonequals\overline{\Fp(\alpha_2)}$ and $K\colonequals L_1(\alpha_1,\beta)$. If $K=L_1$, then $\alpha_1\in L_1=\overline{\Fp(\alpha_2)}$, as desired. Henceforth, we assume $K/L_1$ is a function field, either of transcendence degree equal to $1$, or of transcendence degree equal to $2$ (in which case, $\alpha_1$ and $\beta$ are algebraically independent over $L_1$). We argue by contradiction and assume that $\alpha_1\notin L_1$. 

Let $V$ be a smooth projective variety defined over $L_1$ whose function field equals $K$ (either $V$ is a curve if $\trdeg_{L_1}K=1$, or $V=\mathbb{P}^2$ if $\alpha_2$ and $\beta$ are algebraically independent over $L_1$). Let $\Omega_V$ be an inequivalent set of absolute values on $K$ corresponding to the irreducible divisors of $V$. For the function field $K/L_1$, we have:
\begin{equation}
\label{eq:9}
|\gamma|_w\le 1\text{ for each }w\in\Omega_V\text{ if and only if }\gamma\in L_1.
\end{equation}
Since $\alpha_1\notin L_1$, there exists $v\in\Omega_V$ such that $|\alpha_1|_v>1$. We also fix an extension of $|\cdot |_v$ to an absolute value on $\Kbar\subseteq \Lbar$.

Let $\l\in C(\alpha_1,\alpha_2;\beta)$; so, there exist $m,n\in\N$ such that $f_\l^m(\alpha_1)=f_\l^n(\alpha_2)=\beta$. Since we assumed that $C(\alpha_1,\alpha_2;\beta)$ is infinite, Lemma~\ref{lem:finitely_l} allows us to assume that both $m$ and $n$ are arbitrarily large; in particular, we may assume that 
\begin{equation}
\label{eq:8}
|\beta|_v<|\alpha_1|_v^{d^{{\rm min}\{m,n\}}}.
\end{equation}
The equation $f_\l^n(\alpha_2)=\beta$ yields $P_{n,\alpha_2}(\l)=\beta$ and so, using equation~\eqref{eq:3} along with the fact that $|\alpha_2|_v\le 1$ (see~\eqref{eq:9}), we conclude that 
\begin{equation}
\label{eq:10}
|\l|_v\le \max\{|\beta|_v,1\}^{\frac{1}{d^{n-1}}}.
\end{equation}
On the other hand, the equation $f_\l^m(\alpha_1)=\beta$ yields $P_{m,\alpha_1}(\l)=\beta$, i.e.,
\begin{equation}
\label{eq:11}
\l^{d^{m-1}}+\sum_{i=1}^{d^{m-1}} c_{m,i}(\alpha_1)\cdot \l^{d^{m-1}-i}=\beta.
\end{equation}
Lemma~\ref{lem:poly}~(c) yields that $\deg(c_{m,i})\le d\cdot i$ for each $i=1,\dots, d^{m-1}$; also, $c_{m,d^{m-1}}(\alpha_1)=\alpha_1^{d^m}$. Since  $|\beta|_v<|\alpha_1|_v^{d^m}$ by~\eqref{eq:8}, there exists some $i\in\{0,\dots, d^{m-1}-1\}$ such that
\begin{equation}
\label{eq:12}
\left|c_{m,i}(\alpha)\cdot \l^{d^{m-1}-i}\right|_v\ge |\alpha_1|_v^{d^m}.
\end{equation}
Because each $c_{m,i}$ is a polynomial with coefficients in $\Fp$ of degree at most $d\cdot i$ (according to Lemma~\ref{lem:poly}~(c)), we have that
\begin{equation}
\label{eq:13}
\left|c_{m,i}(\alpha_1)\right|_v\le |\alpha_1|_v^{d\cdot i}\text{ for each }i=0,\dots, d^{m-1}-1.
\end{equation}
Combining inequalities \eqref{eq:12} and \eqref{eq:13}, we obtain that 
\begin{equation}
\label{eq:14}
|\l|_v\ge |\alpha_1|_v^d.
\end{equation}
Next, combining inequalities \eqref{eq:10} and \eqref{eq:14}, we get
\begin{equation}
\label{eq:015}
\max\left\{|\beta|_v,1\right\}\ge |\l|_v^{d^{n-1}}\ge |\alpha_1|_v^{d^n}>1.
\end{equation}
Inequalities \eqref{eq:8} and \eqref{eq:015} provide a contradiction; so, we must have that $\alpha_1\in L_1$. 

This concludes our proof of Lemma~\ref{lem:trdeg_1_2}.
\end{proof}

\begin{proposition}
\label{prop:trdeg_right}
Let $\alpha_1,\alpha_2,\beta\in L$ and assume $C(\alpha_1,\alpha_2;\beta)$ is infinite. Then at least one of the following two conditions must hold:
\begin{itemize}
\item[(1)] $\alpha_1^d=\alpha_2^d$.
\item[(2)] $\beta\in\overline{\Fp(\alpha_1)}=\overline{\Fp(\alpha_2)}$.
\end{itemize}
\end{proposition}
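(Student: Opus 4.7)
The plan is to build on Lemma~\ref{lem:trdeg_1_2}, which under the infinitude of $C(\alpha_1,\alpha_2;\beta)$ already gives the equality $L_1 := \overline{\mathbb{F}_p(\alpha_1)} = \overline{\mathbb{F}_p(\alpha_2)}$. I would argue by assuming that condition~(2) fails, i.e.\ $\beta \notin L_1$, and deducing condition~(1). Since $L_1$ is algebraically closed, $\beta$ must be transcendental over $L_1$, so $K := L_1(\beta)$ is a function field of transcendence degree one over $L_1$. I would then fix a place $v$ on $K/L_1$ with $|\beta|_v > 1$ (for instance the pole of $\beta$, viewing $K$ as the function field of $\mathbb{P}^1_{L_1}$), extend $|\cdot|_v$ to an absolute value on $\overline{L}$, and use that every element of $L_1$ has $v$-norm at most $1$.

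Next, mirroring the valuation argument in the proof of Lemma~\ref{lem:trdeg_1_2} but now with $\beta$ playing the role of the ``large'' element, I would pick any $\lambda \in C(\alpha_1,\alpha_2;\beta)$ with $f_\lambda^m(\alpha_1) = f_\lambda^n(\alpha_2) = \beta$. Because each coefficient $c_{m,i}(\alpha_1)$ lies in $L_1$ (Lemma~\ref{lem:poly} shows these are $\mathbb{F}_p$-polynomials in $\alpha_1$) and hence has $v$-norm at most $1$, the ultrametric inequality applied to $P_{m,\alpha_1}(\lambda) = \beta$ first forces $|\lambda|_v > 1$ (otherwise the left-hand side would have norm at most $1 < |\beta|_v$) and then $|\lambda|_v^{d^{m-1}} = |\beta|_v$, since the leading term $\lambda^{d^{m-1}}$ strictly dominates the lower ones. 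The symmetric calculation with $\alpha_2$ yields $|\lambda|_v^{d^{n-1}} = |\beta|_v$, and because $|\beta|_v > 1$ this forces $m = n$.

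At that point, consider the polynomial $R_m(\lambda) := P_{m,\alpha_1}(\lambda) - P_{m,\alpha_2}(\lambda) \in L_1[\lambda]$, which vanishes at our chosen $\lambda$. If $R_m$ were not identically zero, algebraic closedness of $L_1$ would force $\lambda \in L_1$, whence $\beta = P_{m,\alpha_1}(\lambda) \in L_1$, contradicting $\beta \notin L_1$. Hence $R_m \equiv 0$, i.e.\ $P_{m,\alpha_1} \equiv P_{m,\alpha_2}$ as polynomials in $\lambda$. I would then unwind the recursion $P_{m,\alpha}(\lambda) = P_{m-1,\alpha}(\lambda)^d + \lambda$ to get $P_{m-1,\alpha_1}^d = P_{m-1,\alpha_2}^d$ in the UFD $L_1[\lambda]$; this gives $P_{m-1,\alpha_1} = \zeta \cdot P_{m-1,\alpha_2}$ for some $\zeta \in L_1$ with $\zeta^d = 1$, and comparing leading coefficients (both polynomials are monic by Lemma~\ref{lem:poly}(a)) forces $\zeta = 1$. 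Iterating down to $m = 1$ yields $\alpha_1^d + \lambda = \alpha_2^d + \lambda$, which is condition~(1).

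The main obstacle I expect is pinning down the valuation step that forces $m = n$: one has to verify both that $|c_{m,i}(\alpha_1)|_v \le 1$ via the structural information in Lemma~\ref{lem:poly} and that the leading term strictly dominates, in order to conclude $|\lambda|_v^{d^{m-1}} = |\beta|_v$ cleanly. Once $m = n$ is in hand, the remaining steps are essentially formal; the only minor subtlety is handling the extraction of $d$-th roots in characteristic $p$ uniformly (via injectivity of Frobenius when $d = p^\ell$, and via $d$-th roots of unity in $L_1$ otherwise), but in every case the monic normalization from Lemma~\ref{lem:poly}(a) kills the residual root-of-unity ambiguity. Conceptually the proof is a dual to Lemma~\ref{lem:trdeg_1_2}: algebraic closedness of $L_1$ traps $\lambda$ back inside $L_1$ unless $R_m$ vanishes identically, and the identity $R_m \equiv 0$ then propagates down the iteration recursion to the desired collision $\alpha_1^d = \alpha_2^d$.
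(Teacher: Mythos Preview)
Your proposal is correct and largely parallels the paper's proof: you invoke Lemma~\ref{lem:trdeg_1_2} for $L_1=\overline{\Fp(\alpha_1)}=\overline{\Fp(\alpha_2)}$, pass to the rational function field $L_1(\beta)$, use the place at infinity to show $|\lambda|_v^{d^{m-1}}=|\beta|_v=|\lambda|_v^{d^{n-1}}$ and hence $m=n$, and then observe that $\lambda\notin L_1$ forces the polynomial identity $P_{m,\alpha_1}\equiv P_{m,\alpha_2}$. All of this matches the paper.

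The genuine divergence is in the final step, extracting $\alpha_1^d=\alpha_2^d$ from the identity $P_{m,\alpha_1}\equiv P_{m,\alpha_2}$. The paper reads off the constant term $\alpha_1^{d^m}=\alpha_2^{d^m}$ (which suffices when $d=p^\ell$), and for general $d=p^r s$ with $s\ge 2$ proves a separate technical Lemma~\ref{lem:exact_form} computing the subleading nonzero coefficient of $P_{m,\alpha}$ explicitly, then equates that coefficient for $\alpha_1$ and $\alpha_2$ to get $\alpha_1^{p^{rm}s}=\alpha_2^{p^{rm}s}$ and hence $\alpha_1^s=\alpha_2^s$. Your descent via the recursion $P_{m,\alpha}=P_{m-1,\alpha}^d+\lambda$ is cleaner: from $P_{m-1,\alpha_1}^d=P_{m-1,\alpha_2}^d$ in the domain $L_1[\lambda]$, injectivity of Frobenius handles the $p$-part of $d$, factoring $x^s-1$ over the algebraically closed $L_1$ handles the prime-to-$p$ part, and monicity (Lemma~\ref{lem:poly}(a)) kills the root-of-unity ambiguity, so you iterate straight down to $P_{1,\alpha_1}=P_{1,\alpha_2}$. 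This bypasses Lemma~\ref{lem:exact_form} entirely and treats all $d$ uniformly; the paper's route, on the other hand, makes the relevant coefficient explicit, which could be of independent use.
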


\begin{proof}
First, Lemma~\ref{lem:trdeg_1_2} yields the equality $\overline{\Fp(\alpha_1)}=\overline{\Fp(\alpha_2)}$. So, letting $L_1\colonequals\overline{\Fp(\alpha_1)}$, it suffices to prove that if  $\beta\notin L_1$, then $\alpha_1^d=\alpha_2^d$.

We let $K_1=L_1(\beta)$; this is a rational function field of transcendence degree $1$ (since we assumed that $\beta\notin L_1$). We view $K_1$ as the function field of $\mathbb{P}^1$ over $L_1$. Let $|\cdot|_\infty$ be the absolute value on the function field $K_1/L_1$ corresponding to the place at infinity from $\mathbb{P}^1_{L_1}$; hence $|\beta|_\infty>1$. We fix an extension of $|\cdot |_\infty$ to an absolute value on the algebraic closure $\overline{K_1}$.

Let $j\in\{1,2\}$, let $\l\in\overline{K_1}$ and let $\ell\in\N$ such that $f_\l^\ell(\alpha_j)=\beta$. Then equation~\eqref{eq:3} yields
\begin{equation}
\label{eq:15}
\l^{d^{\ell-1}}+\sum_{i=1}^{d^{\ell-1}} c_{\ell,i}(\alpha_j)\cdot \l^{d^{\ell-1}-i}=\beta.
\end{equation}
Since $\left|c_{\ell,i}(\alpha_j)\right|_\infty\le 1$ (note that $\alpha_j\in L_1$), we get that 
\begin{equation}
\label{eq:16}
|\l|_\infty=|\beta|_\infty^{\frac{1}{d^{\ell-1}}}>1.
\end{equation} 
Now, let $\l\in C(\alpha_1,\alpha_2;\beta)$, and let $m,n\in\N $ such that $f_\l^m(\alpha_1)=f_\l^n(\alpha_2)=\beta$. Equation~\eqref{eq:16} yields that $$|\l|_\infty=|\beta|_\infty^{\frac{1}{d^{m-1}}}=|\beta|_\infty^{\frac{1}{d^{n-1}}};$$
since $|\beta|_\infty>1$, we have $m=n$. Consequently, $P_{m,\alpha_1}(\l)=P_{m,\alpha_2}(\l)=\beta$, i.e.
\begin{equation}
\label{eq:17}
\l^{d^{m-1}}+\sum_{i=1}^{d^{m-1}}c_{m,i}(\alpha_1)\cdot \l^{d^{m-1}-i}= \l^{d^{m-1}}+\sum_{i=1}^{d^{m-1}}c_{m,i}(\alpha_2)\cdot \l^{d^{m-1}-i},
\end{equation}
with the notation for $c_{m,i}$ as in Lemma~\ref{lem:poly}. Noting that $c_{m,d^{m}}(\alpha_j)=\alpha_j^{d^m}$ for $j=1,2$,
equation \eqref{eq:17} yields
\begin{equation}
\label{eq:18}
\sum_{i=1}^{d^{m-1}-1} \left(c_{m,i}(\alpha_1)-c_{m,i}(\alpha_2)\right)\cdot \l^{d^{m-1}-i} + \left(\alpha_1^{d^m}-\alpha_2^{d^m}\right)=0.
\end{equation}
Because $\l$ is transcendental over $L_1$ (due to \eqref{eq:16}), equation~\eqref{eq:18} yields that 
\begin{equation}
\label{eq:19}
\alpha_1^{d^m}=\alpha_2^{d^m}\text{ and}
\end{equation}
\begin{equation}
\label{eq:20}
c_{m,i}(\alpha_1)=c_{m,i}(\alpha_2)\text{ for each }i=1,\dots, d^{m-1}-1.
\end{equation}
Now, we write $d=p^r\cdot s$, for some nonnegative integer $r$ and some positive integer $s$, which is not divisible by $p$. If $s=1$, i.e., $d=p^r$, then equation~\eqref{eq:19} yields that $\alpha_1=\alpha_2$ and therefore, condition~(1) from the conclusion of Proposition~\ref{prop:trdeg_right} holds. Also, if $r=0$ (i.e., $d$ is \emph{not} divisible by $p$), we claim that $\alpha_1^d=\alpha_2^d$. Indeed, in this case, $c_{2,1}(\alpha_j)$ is given by the following formula for $j=1,2$:
\[c_{2,1}(\alpha_j)=\left\{\begin{array}{ccc}
d\alpha_j^d & \text{ if } & d>2,\\
2\alpha_j^2+1 & \text{ if } & d=2.
\end{array}\right.\]
An easy induction on $m$ shows that if $d>2$, then $c_{m,1}(\alpha_j)=d^{m-1}\alpha_j^d$ for $j=1,2$. Therefore, equation~\eqref{eq:20} yields $\alpha_1^d=\alpha_2^d$ (note that $p$ does not divide $d$ in this case because we assumed that $r=0$). A similar induction on $m$ shows that if $d=2$, then for $j=1,2$ and $m\ge 2$, we have that 
$$c_{m,1}(\alpha_j)=2^{m-2}\cdot \left(2\alpha_j^2+1\right)$$
and therefore, once again equation~\eqref{eq:20} yields $\alpha_1^2=\alpha_2^2$ (recall that in this case, $d=2$ and $p>2$).

So, from now on, we may assume $s\ge 2$ and $r\ge 1$. 
\begin{lemma}
\label{lem:exact_form}
With the above notation, we have that $c_{m,i}=0$ for $i=1,\dots, p^{(m-1)r}-1$, while 
$$c_{m,p^{(m-1)r}}(\alpha_j)=s^{(p^{r(m-1)}-1)/(p^r-1)}\alpha_j^{p^{rm}s}\text{ for $j=1,2$.}$$
\end{lemma}

\begin{proof}[Proof of Lemma~\ref{lem:exact_form}.]
The result follows readily by induction on $m$, using the recurrence relation~\eqref{eq:0}. The desired formula holds trivially also when $m=1$. To aid the reader's intuition for the general case, we show the computation for $m=2$. For the sake of simplifying our notation in Lemma~\ref{lem:exact_form},  we let $\alpha\colonequals \alpha_j$ (for $j=1,2$) and observe that
$$P_{2,\alpha}(\l) = \left(\alpha^d+\l\right)^d = \left(\alpha^{sp^r} + \l\right)^{p^r\cdot s}+\l= \left(\alpha^{sp^{2r}}+\l^{p^r}\right)^s+\l,$$
which yields that 
$$P_{2,\alpha}(\l)=\l^{p^r\cdot s}+s\cdot \alpha^{sp^{2r}}\cdot \l^{p^r(s-1)} + \text{ lower order terms,}$$
proving the claimed formula when $m=2$. For the inductive step, assume the formula holds for some $m\geq 1$:
$$P_{m,\alpha}(\l) = \l^{p^{(m-1)r}\cdot s^{m-1}} + s^{(p^{r(m-1)}-1)/(p^r-1)}\alpha^{p^{rm}s}\cdot \l^{p^{(m-1)r}\cdot \left(s^{m-1}-1\right)} + \text{ lower order terms.}$$
Then the recurrence relation \eqref{eq:0} yields
$$P_{m+1,\alpha}(\l) = \left(\l^{p^{(m-1)r}\cdot s^{m-1}} + s^{(p^{r(m-1)}-1)/(p^r-1)}\alpha^{p^{rm}s}\cdot \l^{p^{(m-1)r}\cdot \left(s^{m-1}-1\right)} + \cdots\right)^{p^r\cdot s}+\l.$$
Since the field has characteristic $p$, we can distribute the $p^r$ exponent, which gives
$$P_{m+1,\alpha}(\l)= \left(\l^{p^{mr}\cdot s^{m-1}} + s^{p^r\cdot (p^{r(m-1)}-1)/(p^r-1)}\alpha^{p^{r(m+1)}s}\cdot \l^{p^{mr}\cdot \left(s^{m-1}-1\right)} + \cdots\right)^s+\l.$$
We conclude that 
$$P_{m+1,\alpha}(\l) = \l^{p^{mr}\cdot s^m}+ s\cdot s^{p^r\cdot (p^{r(m-1)}-1)/(p^r-1)}\alpha^{p^{r(m+1)}s}\cdot \l^{p^{mr}\cdot s^{m-1}\cdot (s-1) + p^{mr}\cdot \left(s^{m-1}-1\right)} + \text{ $\cdots$, i.e.,}$$
$$P_{m+1,\alpha}(\l) = \l^{d^m} +s^{\frac{p^{rm}-1}{p^r-1}}\alpha^{p^{r(m+1)}s}\cdot \l^{d^m-p^{mr}}+\text{ lower order terms,} $$
which completes the inductive step and thus proves Lemma~\ref{lem:exact_form}.
\end{proof}
Applying Lemma~\ref{lem:exact_form}, the equation~\eqref{eq:20} implies that:
\begin{equation}
\label{eq:21}
s^{(p^{r(m-1)}-1)/(p^r-1)} \alpha_1^{p^{rm}s}=s^{(p^{r(m-1)}-1)/(p^r-1)}\alpha_2^{p^{rm}s}.
\end{equation}
Since $s\ne 0 $ in $\Fp$, equation~\eqref{eq:21} yields $\alpha_1^s=\alpha_2^s$ and therefore, $\alpha_1^d=\alpha_2^d$, as desired.

This concludes our proof of Proposition~\ref{prop:trdeg_right}.
\end{proof}


\section{Heights}
\label{sec:heights}

In this Section, we establish the framework of absolute values and heights needed for our proof of Theorem~\ref{thm:main}. Throughout Section~\ref{sec:heights}, we let $t$ be a transcendental element over $\Fpbar$. 


\subsection{Absolute values for the one-variable rational function field}

We let $\Omega_0\colonequals\Omega_{\Fpbar(t)}$ be the set of absolute values on $\Fpbar(t)$ defined as follows. For each  $c\in\Fpbar$, we have the unique absolute value $|\cdot |_{v_c}$ in $\Omega_0$ normalized  as follows: for any nonzero rational function $\frac{g_1}{g_2}$ (with $g_1,g_2\in\Fpbar[t]\setminus\{0\}$), we have $$\left|\frac{g_1}{g_2}\right|_{v_c}\colonequals e^{{\rm ord}_c(g_2)-{\rm ord}_c(g_1)},$$ 
where ${\rm ord}_c(g)$ is the order of vanishing at the point $c\in\Fpbar$ of any nonzero polynomial $g\in\Fpbar[t]$. Besides the above absolute values $|\cdot|_{v_c}$, we also have the absolute value $|\cdot|_{v_\infty}\in\Omega_0$ normalized as follows: for any nonzero rational function $\frac{g_1}{g_2}$ (with $g_1,g_2\in\Fpbar[t]\setminus\{0\}$), we have 
$$\left|\frac{g_1}{g_2}\right|_{v_\infty}=e^{\deg(g_1)-\deg(g_2)}.$$
It is immediate that for each nonzero rational function $g\in\Fpbar(t)$, we have the following product formula:
\begin{equation}
\label{eq:prod_formula}
\prod_{v\in \Omega_0}|g|_v=1.
\end{equation} 


\subsection{Extending the absolute values to the perfect closure}
\label{subsec:extending}

We let $L_0\subset\Lbar$ be the perfect closure of $\Fpbar(t)$, i.e., 
\begin{equation}
\label{eq:L}
L_0\colonequals\Fpbar\left(t, t^{1/p}, t^{1/p^2},\cdots, t^{1/p^n},\cdots\right).
\end{equation}
The field $L_0$ is \emph{perfect}, meaning all its finite extensions are separable.

Each absolute value $|\cdot |_v$ from $\Omega_0$ has a unique extension to an absolute value on $L_0$; we denote by $\Omega_{L_0}$ the set of all these extended absolute values. Once again, we have a product formula for each nonzero $\gamma\in L_0$:
\begin{equation}
\label{eq:prod_formula_2}
\prod_{v\in\Omega_{L_0}}|\gamma|_v=1.
\end{equation}


\subsection{Heights for any algebraic element}
\label{subsec:heights_0}

For any real number $u$, we define $\log^+|u|\colonequals\log\max\{|u|,1\}$.

We fix a finite extension $K$ of $L_0$. For any absolute value $|\cdot |_v\in \Omega_{L_0}$, there exist finitely many places $w$ of $K$ lying above the place $v$ of $L_0$, denoted by $w|v$. We normalize the corresponding absolute values $|\cdot |_w$ on $K$, and we denote by $\Omega\colonequals\Omega_K$ the corresponding set of absolute values. For each $\gamma\in L_0$ and each $v\in\Omega_{L_0}$, we have the following relation:
\begin{equation}
\label{eq:coherence}
|\gamma|_v=\prod_{\substack{w\in\Omega_{K}\\ w|v}}|\gamma|_w.
\end{equation}
Using equations~\eqref{eq:coherence}~and~\eqref{eq:prod_formula_2}, $K$ satisfies the product formula with respect to the absolute values from $\Omega=\Omega_K$, i.e.,
$$\prod_{w\in\Omega}|\gamma|_w=1\text{ for each nonzero }\gamma\in K.$$
We fix an algebraic closure $\Kbar$ of $K$. Then for each $w\in\Omega=\Omega_K$, we fix an extension of $|\cdot|_w$ to an absolute value on $\Kbar$.

For any $\gamma\in \Kbar$, we  define the \emph{Weil height} of $\gamma$ as follows:
\begin{equation}
\label{eq:height}
h(\gamma)\colonequals h_K(\gamma)\colonequals \frac{1}{[K(\gamma):K]}\cdot \sum_{\substack{\sigma:K(\gamma)\to \Kbar\\ \sigma|_{K}={\rm id}_{K}}}  \sum_{v\in\Omega}\log^+\left|\sigma(\gamma)\right|_v.
\end{equation}
\begin{remark}
\label{rem:K}
By construction, the Weil height depends on our choice of field $K$. In our proof of Theorem~\ref{thm:main}, we will choose a field $K$ so that it contains $\alpha_1,\alpha_2,\beta$ (for more details, see Section~\ref{sec:proof}).
\end{remark}


\subsection{Canonical heights}
\label{subsec:heights}

For any $\l\in\Kbar$, we consider the polynomial $f_\l(z)=z^d+\l$ of degree $d\ge 2$. Let $\gamma\in \Kbar$. For each $v\in\Omega=\Omega_K$, we construct the \emph{local canonical height}: 
\begin{equation}
\label{eq:local_canonical}
\hhat_{\l,v}(\gamma)\colonequals\lim_{n\to\infty} \frac{\log^+\left|f_\l^n(\gamma)\right|_v}{d^n}.
\end{equation}
Equation~\eqref{eq:local_canonical} yields that for each $m\in\N$:
\begin{equation}
\label{eq:local_canonical_2}
\hhat_{\l,v}\left(f_\l^m(\gamma)\right)=d^m\cdot \hhat_{\l,v}(\gamma).
\end{equation}
We define the \emph{(global) canonical height} $\hhat_\l(\gamma)$ as follows:
\begin{equation}
\label{eq:global_canonical_2}
\hhat_\l(\alpha)=\lim_{n\to\infty} \frac{h\left(f_\l^n(\gamma)\right)}{d^n}.
\end{equation}
Equation~\eqref{eq:global_canonical_2} yields that for each $m\in\N$:
\begin{equation}
\label{eq:global_canonical_3}
\hhat_{\l}\left(f_\l^m(\gamma)\right)=d^m\cdot \hhat_{\l}(\gamma).
\end{equation}
For more details regarding the canonical height associated to a polynomial, see \cite[Section~3.4]{Silverman} and also \cite{C-S}. Furthermore, we have the following connection between the local and the global canonical heights.

\begin{lemma}
\label{lem:canonical_heights}
Let $\alpha\in K$ and $\l\in \Kbar$. Then the following holds:
\begin{equation}
\label{eq:50}
\hhat_\l(\alpha)=\frac{1}{[K(\lambda):K]}\cdot \sum_{\substack{\sigma:K(\lambda)\to \Kbar\\ \sigma|_K={\rm id}|_K}} \sum_{v\in\Omega_K} \hhat_{\sigma(\l),v}(\alpha).
\end{equation}
\end{lemma}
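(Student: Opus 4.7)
The plan is to unpack both sides of \eqref{eq:50} from their defining limits and reduce the identity to a place-by-place computation whose support is controlled uniformly in $n$. Starting from $\hhat_\l(\alpha)=\lim_n h(f_\l^n(\alpha))/d^n$, I expand the Weil height on the right via \eqref{eq:height}. The key observation is that because $\alpha\in K$ and $\sigma|_K={\rm id}_K$, every embedding $\sigma:K(\l)\to\Kbar$ satisfies $\sigma(f_\l^n(\alpha))=f_{\sigma(\l)}^n(\alpha)$, since $\sigma$ fixes $\alpha$ and carries the coefficients of $f_\l$ to those of $f_{\sigma(\l)}$. In particular $f_\l^n(\alpha)\in K(\l)$, so I can average over the $n$-independent set of embeddings of $K(\l)$ rather than of the possibly smaller subfield $K(f_\l^n(\alpha))$.

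The first step is a combinatorial reformulation of the Weil height: for any $\gamma\in K(\l)$, each embedding $\tau:K(\gamma)\to\Kbar$ fixing $K$ extends in exactly $[K(\l):K(\gamma)]$ ways to an embedding $\sigma:K(\l)\to\Kbar$, with each such $\sigma$ agreeing with $\tau$ on $K(\gamma)$. Dividing by $[K(\l):K]=[K(\l):K(\gamma)]\cdot[K(\gamma):K]$ then gives
\[
h(\gamma)=\frac{1}{[K(\l):K]}\sum_{\sigma:K(\l)\to\Kbar}\sum_{v\in\Omega_K}\log^+|\sigma(\gamma)|_v.
\]
Applying this to $\gamma=f_\l^n(\alpha)$, substituting $\sigma(\gamma)=f_{\sigma(\l)}^n(\alpha)$, dividing through by $d^n$, and letting $n\to\infty$ reduces \eqref{eq:50} to justifying the interchange of $\lim_n$ with the sums over $\sigma$ and over $v\in\Omega_K$.

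The interchange with the finite sum over $\sigma$ is automatic. For the sum over $v$, I would exhibit a finite set $S\subseteq\Omega_K$ outside of which every term vanishes uniformly in $n$. Let $S$ consist of those $v$ with $|\alpha|_v>1$ together with those $v$ with $|\sigma(\l)|_v>1$ for some embedding $\sigma$; this is finite since $\alpha$ and the conjugates of $\l$ all lie in a finite extension of $K$. Because $K$ is a finite extension of the perfect closure of $\Fpbar(t)$, every $v\in\Omega_K$ is non-archimedean, so the ultrametric inequality gives $|f_{\sigma(\l)}(z)|_v\le\max\{|z|_v^d,|\sigma(\l)|_v\}$; for $v\notin S$, induction from $|\alpha|_v\le 1$ and $|\sigma(\l)|_v\le 1$ yields $|f_{\sigma(\l)}^n(\alpha)|_v\le 1$, so $\log^+|f_{\sigma(\l)}^n(\alpha)|_v=0$ for every $n$. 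Thus the sum over $v$ is supported on the finite set $S$ for all $n$, the interchange of $\lim_n$ with $\sum_v$ is trivial, and each surviving term converges to $\hhat_{\sigma(\l),v}(\alpha)$ by \eqref{eq:local_canonical}. This is essentially the standard local-global decomposition of the Call--Silverman canonical height in the function-field setting; the only real care needed is the embedding bookkeeping above when $f_\l^n(\alpha)$ generates a proper subfield of $K(\l)$, which is of course no serious obstacle.
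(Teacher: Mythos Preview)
Your proof is correct and follows essentially the same route as the paper: expand $\hhat_\l(\alpha)$ via its defining limit, use $\alpha\in K$ to rewrite $\sigma(f_\l^n(\alpha))=f_{\sigma(\l)}^n(\alpha)$, and then interchange the limit with the sum over places because only finitely many $v$ contribute. You are actually more careful than the paper on two points---the embedding bookkeeping when $K(f_\l^n(\alpha))\subsetneq K(\l)$, and the explicit construction of the finite set $S$ (the paper simply cites the analogue of Lemma~\ref{lem:easy}\ref{lem:easy:item:i} for the latter).
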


\begin{proof}
Employing equations~\eqref{eq:global_canonical_2}, \eqref{eq:local_canonical} and \eqref{eq:height}, we have:
\begin{equation}
\label{eq:521}
\hhat_\l(\alpha)=\frac{1}{[K(\l):K]}\cdot \lim_{n\to\infty} \frac{1}{d^n}\cdot \sum_{\substack{\sigma:K(\l)\to \Kbar\\ \sigma|_K={\rm id}|_K}} \sum_{v\in\Omega_K}\log^+\left|\sigma\left(f_\l^n(\alpha)\right)\right|_v
\end{equation}
and so, because $\alpha\in K$, we get
\begin{equation}
\label{eq:520}
\hhat_\l(\alpha)=\frac{1}{[K(\l):K]}\cdot \lim_{n\to\infty} \frac{1}{d^n}\cdot \sum_{\substack{\sigma:K(\l)\to \Kbar\\ \sigma|_K={\rm id}|_K}} \sum_{v\in\Omega_K}\log^+\left|f_{\sigma(\l)}^n(\alpha)\right|_v\text{; therefore,}
\end{equation}
\begin{equation}
\label{eq:52}
\hhat_\l(\alpha)=\frac{1}{[K(\l):K]}\cdot \sum_{v\in\Omega_K} \sum_{\substack{\sigma:K(\l)\to \Kbar\\ \sigma|_K={\rm id}|_K}} \hhat_{\sigma(\l),v}(\alpha).
\end{equation}
The limit and the corresponding summation over all $v\in\Omega_K$ in~\eqref{eq:520} can be interchanged, because the limit equals $0$ for all but finitely many places $v$ (see \cite[Lemma~3.4]{1}). 
\end{proof}


\section{Bounds for the canonical height for our family of polynomials}
\label{sec:bounds}

We continue with the framework for local and global canonical heights from Section~\ref{sec:heights}. Let 
$$L_0=\Fpbar\left(t,t^{\frac{1}{p}},t^{\frac{1}{p^2}},\cdots\right)$$
and let $K$ be a finite extension of $L_0$. We denote by $\Omega_0$ the set of absolute values on $L_0$ constructed as in Subsection~\ref{subsec:extending}. Also, we let $\Omega\colonequals\Omega_K$ be the normalized absolute values on $K$ corresponding to the places of $K$ lying above the places from $\Omega_0$ (see Subsection~\ref{subsec:heights_0}). In addition, we fix an extension of each $|\cdot|_v$ to an absolute value on $\Kbar$. We construct the local and global canonical heights (see equations~\eqref{eq:local_canonical}~and~\eqref{eq:global_canonical_2}) with respect to polynomials $f_\l(z)\colonequals z^d+\gamma$ (for $\gamma\in\Kbar$). The following result is proved in \cite[Lemma~3.4]{1}.

\begin{lemma}
\label{lem:easy}
Let $v\in \Omega$ and  let $\gamma,\l\in\Kbar$.  
\begin{enumerate}[label=(\roman*)]
\item\label{lem:easy:item:i} If $|\gamma|_v\le 1$ and $|\l|_v\le 1$, then $\hhat_{\l,v}(\gamma)=0$. 
\item\label{lem:easy:item:ii} If $|\gamma|_v^d>\max\{1,|\l|_v\}$, then $\hhat_{\l,v}(\gamma)=\log|\gamma|_v>0$.
\item\label{lem:easy:item:iii} If $|\l|_v>\max\{1,|\gamma|_v^{d}\}$, then $\hhat_{\l,v}(\gamma)=\frac{\log|\l|_v}{d}>0$. 
\end{enumerate}
\end{lemma}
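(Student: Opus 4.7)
The plan is to exploit the fact that every absolute value in $\Omega$ is non-archimedean. Indeed, each $|\cdot|_v$ on $K$ sits above some place of $L_0$ coming from an order of vanishing (or a $\deg$-valuation) on $\Fpbar(t)$, and all such valuations are non-archimedean; extensions to a finite algebraic closure of $L_0$ remain non-archimedean. Consequently, for any $a,b\in\Kbar$ we have $|a+b|_v\le\max\{|a|_v,|b|_v\}$, with equality whenever $|a|_v\ne|b|_v$. This strong triangle inequality is the single tool that drives all three cases.

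For part~\ref{lem:easy:item:i}, I would prove by induction on $n\ge 0$ that $|f_\l^n(\gamma)|_v\le 1$. The base case $n=0$ is the hypothesis $|\gamma|_v\le 1$. For the inductive step, I would estimate
\[
\bigl|f_\l^{n+1}(\gamma)\bigr|_v = \bigl|f_\l^n(\gamma)^d + \l\bigr|_v \le \max\bigl\{|f_\l^n(\gamma)|_v^d,\,|\l|_v\bigr\}\le 1.
\]
It follows that $\log^+|f_\l^n(\gamma)|_v=0$ for every $n$, and the defining limit~\eqref{eq:local_canonical} is zero.

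For part~\ref{lem:easy:item:ii}, I would prove by induction on $n\ge 0$ the stronger statement that $|f_\l^n(\gamma)|_v=|\gamma|_v^{d^n}$. The base case is trivial. For the step, assuming $|f_\l^n(\gamma)|_v=|\gamma|_v^{d^n}$, note that
\[
|f_\l^n(\gamma)|_v^d = |\gamma|_v^{d^{n+1}}\ge |\gamma|_v^d > \max\{1,|\l|_v\},
\]
so in $f_\l^{n+1}(\gamma)=f_\l^n(\gamma)^d+\l$ the first term strictly dominates the second in $|\cdot|_v$. The ultrametric equality case then yields $|f_\l^{n+1}(\gamma)|_v=|\gamma|_v^{d^{n+1}}$. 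In particular $|f_\l^n(\gamma)|_v>1$, so $\log^+|f_\l^n(\gamma)|_v=d^n\log|\gamma|_v$, and dividing by $d^n$ and passing to the limit gives $\hhat_{\l,v}(\gamma)=\log|\gamma|_v>0$.

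For part~\ref{lem:easy:item:iii}, I would reduce to part~\ref{lem:easy:item:ii} by iterating once. Under the hypothesis $|\l|_v>\max\{1,|\gamma|_v^d\}$, the strong triangle inequality gives $|f_\l(\gamma)|_v=|\gamma^d+\l|_v=|\l|_v$. Set $\gamma'\colonequals f_\l(\gamma)$. Then $|\gamma'|_v^d=|\l|_v^d>|\l|_v$ (because $|\l|_v>1$), so $\gamma'$ satisfies the hypothesis of~\ref{lem:easy:item:ii}; hence $\hhat_{\l,v}(\gamma')=\log|\gamma'|_v=\log|\l|_v$. Combining with the functional equation~\eqref{eq:local_canonical_2} we conclude
\[
\hhat_{\l,v}(\gamma)=\frac{\hhat_{\l,v}(f_\l(\gamma))}{d}=\frac{\log|\l|_v}{d}>0,
\]
as desired. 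There is no serious obstacle here; the only point worth flagging is the non-archimedean nature of every $v\in\Omega$, which is what makes the ultrametric equality case (and thus the clean induction in~\ref{lem:easy:item:ii}) available.
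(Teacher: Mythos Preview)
Your argument is correct and is the standard way to prove this lemma: use the ultrametric inequality to control the iterates in each of the three regimes, reducing~\ref{lem:easy:item:iii} to~\ref{lem:easy:item:ii} after one application of $f_\l$. The paper itself does not supply a proof but simply cites \cite[Lemma~3.4]{1}, so there is nothing to compare against beyond noting that your write-up fills in exactly what that citation is meant to cover.
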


Lemma~\ref{lem:easy} yields the following key inequality.
\begin{proposition}
\label{prop:useful_variation}
For $\alpha\in K$ and $\l\in\Kbar$, we have that 
\begin{equation}
\label{eq:variation}
\frac{h(\l)}{d}-h(\alpha) \le \hhat_\l(\alpha)\le \frac{h(\l)}{d}+h(\alpha).
\end{equation}
\end{proposition}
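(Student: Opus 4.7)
The strategy is to establish a local analog of the desired inequality at each place $v\in\Omega$, then sum over places and average over Galois conjugates via Lemma~\ref{lem:canonical_heights}. Since $\alpha\in K$, the Weil height $h(\alpha)$ is simply $\sum_{v\in\Omega}\log^+|\alpha|_v$, which is what makes the local-to-global step clean.

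\textbf{Local upper bound.} I would first show that for each $v\in\Omega$ and each $\l\in\Kbar$,
$$\hhat_{\l,v}(\alpha)\le \log^+|\alpha|_v+\frac{\log^+|\l|_v}{d}.$$
Since every place of $K$ is non-archimedean (the base field is a function field), the ultrametric inequality gives $|f_\l(\gamma)|_v\le \max\{|\gamma|_v^d,|\l|_v\}$ for any $\gamma$. Writing $A\colonequals\log^+|\alpha|_v$ and $B\colonequals\log^+|\l|_v$, an easy induction on $n$ then yields $\log^+|f_\l^n(\alpha)|_v\le \max\{d^nA,d^{n-1}B\}$. Dividing by $d^n$ and letting $n\to\infty$ gives $\hhat_{\l,v}(\alpha)\le \max\{A,B/d\}\le A+B/d$.

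\textbf{Local lower bound.} Next I would show
$$\hhat_{\l,v}(\alpha)\ge \frac{\log^+|\l|_v}{d}-\log^+|\alpha|_v,$$
by case analysis using Lemma~\ref{lem:easy}. In case~\ref{lem:easy:item:i} both sides vanish. In case~\ref{lem:easy:item:ii}, $\hhat_{\l,v}(\alpha)=\log|\alpha|_v=A$ and the inequality $|\l|_v\le|\alpha|_v^d$ forces $B\le dA$, so $B/d-A\le 0\le A$. In case~\ref{lem:easy:item:iii}, $\hhat_{\l,v}(\alpha)=B/d\ge B/d-A$ because $A\ge 0$. The only configuration not directly handled by Lemma~\ref{lem:easy} is the borderline case $|\l|_v=|\alpha|_v^d>1$; there $B/d-A=0$, while $\hhat_{\l,v}(\alpha)\ge 0$ holds trivially since local canonical heights are limits of $\log^+(\cdot)/d^n\ge 0$.

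\textbf{Globalization.} Summing both local inequalities over the (finitely many nonzero) places $v\in\Omega$, and using $\sum_v\log^+|\alpha|_v=h(\alpha)$, we obtain for every embedding $\sigma\colon K(\l)\to\Kbar$ fixing $K$:
$$\frac{1}{d}\sum_{v\in\Omega}\log^+|\sigma(\l)|_v - h(\alpha)\le \sum_{v\in\Omega}\hhat_{\sigma(\l),v}(\alpha)\le h(\alpha)+\frac{1}{d}\sum_{v\in\Omega}\log^+|\sigma(\l)|_v.$$
Averaging over all $\sigma$ and applying Lemma~\ref{lem:canonical_heights} to the middle sum turns it into $\hhat_\l(\alpha)$, while the outer averages collapse to $h(\l)/d\pm h(\alpha)$ by the very definition~\eqref{eq:height} of the Weil height, yielding the proposition.

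\textbf{Main obstacle.} The only subtle point is the edge case $|\l|_v=|\alpha|_v^d>1$ in the local lower bound, which lies outside the three clean regimes of Lemma~\ref{lem:easy}. Here cancellation in $\alpha^d+\l$ can make $\hhat_{\l,v}(\alpha)$ strictly smaller than $\max\{A,B/d\}$, so we cannot hope to match the sharp upper bound. Fortunately, the inequality asked for in Proposition~\ref{prop:useful_variation} is one-sided with $B/d-A=0$ in this regime, so the trivial estimate $\hhat_{\l,v}(\alpha)\ge 0$ already suffices.
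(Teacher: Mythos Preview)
Your proof is correct and follows the same local-to-global architecture as the paper: establish place-by-place bounds, then sum and average via Lemma~\ref{lem:canonical_heights}. The one noteworthy difference is in the local upper bound: the paper performs a four-way case split on the relative sizes of $|\alpha|_v$ and $|\l|_v$, including a separate inductive argument for the borderline case $|\alpha|_v^d=|\l|_v>1$, whereas your single ultrametric induction $\log^+|f_\l^n(\alpha)|_v\le\max\{d^nA,d^{n-1}B\}$ handles all regimes uniformly---a modest streamlining. Your treatment of the local lower bound and the globalization step are essentially identical to the paper's.
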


Proposition~\ref{prop:useful_variation} belongs to a long series of results regarding the variation of the canonical height in algebraic families. Most of these results are formulated over number fields (see \cite{C-S, D-M-2, G-M, Patrick, Silverman-variation-1, Silverman-variation-2, Silverman-variation-3, Tate}); by contrast, only a few such results are stated over function fields of positive characteristic (see \cite{G-JNT}). 

\begin{proof}[Proof of Proposition~\ref{prop:useful_variation}.]
We first prove the right-hand side inequality from~\eqref{eq:variation}. 

Let $v\in \Omega=\Omega_{K}$ and let $\sigma\colon K(\gamma)\to \Kbar$ be a field homomorphism fixing $K$ pointwise. We let 
\begin{equation}
\label{eq:M_v}
M_{v,\sigma}\colonequals\log^+|\alpha|_v + \frac{\log^+|\sigma(\l)|_v}{d}. 
\end{equation}
Lemma~\ref{lem:easy}~\ref{lem:easy:item:i} shows that 
\begin{equation}
\label{eq:23}
\text{if $|\alpha|_v,|\sigma(\l)|_v\le 1$, then $\hhat_{\sigma(\l),v}(\alpha)=0=M_{v,\sigma}$.} 
\end{equation}
Next, assume that $\max\{|\alpha|_v,|\sigma(\l)|_v\}>1$, i.e., $M_{v,\sigma}>0$.  
If $|\alpha|_v>|\sigma(\l)|_v^{\frac{1}{d}}$, then Lemma~\ref{lem:easy}~\ref{lem:easy:item:ii} yields that 
\begin{equation}
\label{eq:22}
\hhat_{\sigma(\l),v}(\alpha)=\log|\alpha|_v\le M_{v,\sigma}.
\end{equation} 
If $|\alpha|_v<|\sigma(\l)|^{\frac{1}{d}}$, then Lemma~\ref{lem:easy}~\ref{lem:easy:item:iii} guarantees that
\begin{equation}
\label{eq:24}
\hhat_{\sigma(\l),v}=\frac{\log|\sigma(\l)|_v}{d}\le M_{v,\sigma}.
\end{equation}
Now, if $|\alpha|_v=|\sigma(\l)|_v^{\frac{1}{d}}>1$, then we get $\left|f_{\sigma(\l)}(\alpha)\right|_v\le |\alpha|_v^d=|\sigma(\l)|_v$ and so,
\begin{equation}
\label{eq:25}
\log\left|f_{\sigma(\l)}(\alpha)\right|_v< d\cdot M_{v,\sigma}.
\end{equation}
For each $n\ge 1$, we have 
\begin{equation}
\label{eq:26}
\left|f_{\sigma(\l)}^{n+1}(\alpha)\right|_v\le \max\left\{\left|f^n_{\sigma(\l)}(\alpha)\right|_v^d,|\sigma(\l)|_v\right\}.
\end{equation}
Employing inequalities~\eqref{eq:25}~and~\eqref{eq:26}, a simple induction on $n$  results in:
\begin{equation}
\label{eq:27}
\left|f_{\sigma(\l)}^n(\alpha)\right|_v\le d^n\cdot M_{v,\sigma}.
\end{equation}
Inequality \eqref{eq:27} and the definition~\eqref{eq:local_canonical} of the local canonical height together give:
\begin{equation}
\label{eq:28}
\hhat_{\sigma(\l),v}(\alpha)\le M_{v,\sigma}.
\end{equation}
Using equations~\eqref{eq:28}, \eqref{eq:24}, \eqref{eq:23} and \eqref{eq:22}, along with Lemma~\ref{lem:canonical_heights}, we get that
$$
\hhat_{\l}(\alpha)= \frac{1}{[K(\l):K]}\cdot \sum_{\substack{\sigma:K(\l)\to\Kbar\\ \sigma|_K={\rm id}|_K}} 
 \sum_{v\in\Omega} \hhat_{v,\sigma(\l)}(\alpha)\le \frac{1}{[K(\l):K]}\cdot \sum_{\substack{\sigma:K(\l)\to\Kbar\\ \sigma|_K={\rm id}|_K}} 
 \sum_{v\in\Omega} M_{v,\sigma} \text{ and so,}
$$

$$
\hhat_\l(\alpha)\le \frac{1}{[K(\l):K]}\cdot \sum_{\substack{\sigma:K(\l)\to\Kbar\\ \sigma|_K={\rm id}|_K}} \sum_{v\in\Omega} \left(\log^+|\alpha|_v +\frac{\log^+|\sigma(\l)|_v}{d}\right) \text{, which yields}$$

$$\hhat_\l(\alpha)\le  \left(\sum_{v\in\Omega} \log^+|\alpha|_v\right) + \frac{1}{d\cdot [K(\l):K]}\cdot \sum_{\substack{\sigma:K(\l)\to\Kbar\\ \sigma|_K={\rm id}|_K}} \sum_{v\in\Omega} \log^+|\sigma(\l)|_v \text{; hence}
$$

\begin{equation}
\label{eq:29}
\hhat_\l(\alpha)\le h(\alpha)+\frac{h(\l)}{d}\text{, thus proving the right-hand side of \eqref{eq:variation}.}
\end{equation}
Next, we will establish the inequality from the left-hand side of~\eqref{eq:variation}. Again, we obtain suitable inequalities for each place $v\in\Omega$; this time, we define $N_{v,\sigma}\colonequals\frac{\log^+|\sigma(\l)|_v}{d}-\log^+|\alpha|_v$. Immediately, we note that
\begin{equation}
\label{eq:30}
\text{if $\log^+|\sigma(\l)|_v\le d\cdot \log^+|\alpha|_v$, then $\hhat_{\sigma(\l),v}(\alpha)\ge 0\ge N_v$.} 
\end{equation}
On the other hand, if $|\sigma(\l)|_v>\max\left\{1,|\alpha|^d_v\right\}$, then Lemma~\ref{lem:easy}~\ref{lem:easy:item:iii} yields that 
\begin{equation}
\label{eq:31}
\hhat_{\sigma(\l),v}(\alpha)= \frac{\log|\sigma(\l)|_v}{d}=\frac{\log^+|\sigma(\l)|_v}{d}\ge N_v.
\end{equation}
Combining inequalities~\eqref{eq:30}~and~\eqref{eq:31} along with Lemma~\ref{lem:canonical_heights}, we obtain
$$
\hhat_\l(\alpha) = \frac{1}{[K(\l):K]}\cdot \sum_{\substack{\sigma:K(\l)\to\Kbar\\ \sigma|_K={\rm id}|_K}} \sum_{v\in\Omega} \hhat_{v,\sigma(\l)}(\alpha) \ge \frac{1}{[K(\l):K]}\cdot \sum_{\substack{\sigma:K(\l)\to\Kbar\\ \sigma|_K = {\rm id}|_K}}\sum_{v\in\Omega} N_{v,\sigma}$$
and so,
$$
\hhat_\l(\alpha)\ge \frac{1}{[K(\l):K]}\cdot \sum_{\substack{\sigma:K(\l)\to \Kbar\\ \sigma|_K = {\rm id}|_K}} \sum_{v\in\Omega} \left(\frac{\log^+|\sigma(\l)|_v}{d}-\log^+|\alpha|_v\right)\text{, which yields}$$
$$\hhat_\l(\alpha)\ge \left(\frac{1}{d\cdot [K(\l):K]}\cdot \sum_{\substack{\sigma:K(\l)\to\Kbar\\ \sigma|_K = {\rm id}|_K}} \sum_{v\in\Omega} \log^+|\sigma(\l)|_v \right)
-\left(\sum_{v\in\Omega}\log^+|\alpha|_v\right)\text{; hence}
$$
\begin{equation}
\label{eq:32}
\hhat_\l(\alpha)\ge  \frac{h(\l)}{d}-h(\alpha)\text{, as desired for the left-hand side of \eqref{eq:variation}.}
\end{equation}
Inequalities \eqref{eq:29} and \eqref{eq:32} establish the desired conclusion of Proposition~\ref{prop:useful_variation}.
\end{proof}

The following result is instrumental in our proof for Theorem~\ref{thm:main}.
\begin{proposition}
\label{prop:useful_2}
Let $d\ge 2$ be an integer,  let $\alpha,\beta\in K$ and let $\l\in\Kbar$. Let $f_\l(z)\colonequals z^d+\l$ and let $m\in\N$. If $f_\l^m(\alpha)=\beta$, then 
\begin{equation}
\label{eq:33}
\hhat_\l(\alpha)\le \frac{2h(\alpha)+2h(\beta)}{d^m}.
\end{equation}
\end{proposition}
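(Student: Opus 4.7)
The plan is to combine the two-sided variation estimate from Proposition~\ref{prop:useful_variation} with the functional equation~\eqref{eq:global_canonical_3} for the canonical height under iteration, applied to both $\alpha$ and $\beta=f_\l^m(\alpha)$.

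First, I would use \eqref{eq:global_canonical_3} to rewrite
$$\hhat_\l(\beta)=\hhat_\l\bigl(f_\l^m(\alpha)\bigr)=d^m\cdot \hhat_\l(\alpha),$$
so the target inequality is equivalent to $\hhat_\l(\beta)\le 2h(\alpha)+2h(\beta)$. The advantage of this reformulation is that one side of the estimate now involves $\hhat_\l(\beta)$ directly, which can be controlled via the right-hand inequality of Proposition~\ref{prop:useful_variation}:
$$\hhat_\l(\beta)\le \frac{h(\l)}{d}+h(\beta).$$
Next, to eliminate the unknown $h(\l)$, I would apply the left-hand inequality of Proposition~\ref{prop:useful_variation} to $\alpha$, giving
$$\frac{h(\l)}{d}\le \hhat_\l(\alpha)+h(\alpha)=\frac{\hhat_\l(\beta)}{d^m}+h(\alpha).$$
Combining the two displayed inequalities yields
$$\hhat_\l(\beta)\le \frac{\hhat_\l(\beta)}{d^m}+h(\alpha)+h(\beta),$$
i.e., $\bigl(1-d^{-m}\bigr)\hhat_\l(\beta)\le h(\alpha)+h(\beta)$. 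Since $d\ge 2$ and $m\ge 1$, we have $1-d^{-m}\ge 1/2$, so $\hhat_\l(\beta)\le 2h(\alpha)+2h(\beta)$, and dividing by $d^m$ recovers \eqref{eq:33}.

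There is no genuine obstacle here; the argument is essentially a one-line manipulation once Proposition~\ref{prop:useful_variation} and the iteration identity \eqref{eq:global_canonical_3} are in place. The only point that requires a moment of care is to route the variation inequality through both endpoints, $\alpha$ and $\beta$, rather than only through $\alpha$: applying the variation bound to $\alpha$ alone would not produce a factor of $d^{-m}$, and applying it to $\beta$ alone would not involve $h(\alpha)$. The choice to bound $\hhat_\l(\beta)$ from above using $\beta$ and from below (via $\hhat_\l(\alpha)$) using $\alpha$ is what makes the unknown quantity $h(\l)$ cancel and produces the $1-d^{-m}\ge 1/2$ saving.
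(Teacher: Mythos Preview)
Your proof is correct and follows essentially the same approach as the paper: both combine the lower bound of Proposition~\ref{prop:useful_variation} at $\alpha$ with the upper bound at $\beta$, together with the functional equation $\hhat_\l(\beta)=d^m\hhat_\l(\alpha)$, and then use $1-d^{-m}\ge 1/2$. The paper routes the algebra through an explicit intermediate bound on $h(\l)$ before substituting back, whereas you eliminate $h(\l)$ directly; the difference is purely cosmetic.
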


\begin{remark}
\label{2-rem}
It is worth noting that variants of Proposition~\ref{prop:useful_2} were obtained in other settings, such as for families of elliptic curves over number fields (see \cite[Proposition~3.1]{GHT-PJM}) and also for families of Drinfeld modules (see \cite[Proposition~4.4]{G-JNT}). In each case, one combines a result on the variation of the canonical height in the respective family of maps (in our case, Proposition~\ref{prop:useful_variation}) with the precise growth rate of the canonical height in a given orbit (see equation~\eqref{eq:global_canonical_3}).
\end{remark}

\begin{proof}[Proof of Proposition~\ref{prop:useful_2}.]
From Proposition~\ref{prop:useful_variation}, we have the inequalities:
\begin{equation}
\label{eq:34}
\hhat_\l(\alpha)\ge \frac{h(\l)}{d}-h(\alpha)\text{ and }\hhat_\l(\beta)\le \frac{h(\l)}{d}+h(\beta).
\end{equation}
Using the condition $f_\l^m(\alpha)=\beta$ and the identity $d^m\hhat_\l(\alpha)=\hhat_\l(\beta)$ from equation~\eqref{eq:global_canonical_3}, we combine the inequalities in~\eqref{eq:34} to obtain:
\begin{equation}
\label{eq:35}
d^m\cdot \left(\frac{h(\l)}{d}-h(\alpha)\right)\le \frac{h(\l)}{d}+h(\beta).
\end{equation}
Rearranging the terms in \eqref{eq:35} gives an upper bound on the height of $\lambda$:
\begin{equation}
\label{eq:36}
h(\l)\le \frac{d^m\cdot h(\alpha)+h(\beta)}{\frac{d^m-1}{d}}.
\end{equation}
Using the fact that $d^m-1\ge \frac{d^m}{2}$ (since $d\ge 2$ and $m\ge 1$), inequality~\eqref{eq:36} becomes
\begin{equation}
\label{eq:37}
h(\l)\le \frac{2d^{m+1}\cdot h(\alpha)+2d\cdot h(\beta)}{d^m}.
\end{equation}
Combining inequality~\eqref{eq:37} with the second inequality in~\eqref{eq:34} leads to
\begin{equation}
\label{eq:38}
\hhat_\l(\beta)\le \frac{h(\l)}{d}+h(\beta)\le \frac{2d^{m+1}\cdot h(\alpha)+2d\cdot h(\beta)}{d^{m+1}} + h(\beta).
\end{equation}
Since $d^{m+1}\ge 2d$ for $d\ge 2$, inequality~\eqref{eq:38} implies
\begin{equation}
\label{eq:39}
\hhat_\l(\beta)\le 2h(\alpha)+ h(\beta) + h(\beta)=2h(\alpha)+2h(\beta).
\end{equation}
Finally, using $\hhat_\l(\alpha)=\frac{\hhat_\l(\beta)}{d^m}$ from equation~\eqref{eq:global_canonical_3} along with inequality~\eqref{eq:39}, we obtain
$$\hhat_\l(\alpha)\le \frac{2h(\alpha)+2h(\beta)}{d^m},$$
as desired.
\end{proof}


\section{Proof of Theorem~\ref{thm:main}}
\label{sec:proof}

We work with the notation and hypotheses from Theorem~\ref{thm:main}. In particular, we have the family of polynomials $f_\l(z)=z^d+\l$ (with a fixed $d\ge 2$) parameterized by $\l\in\Lbar$, where $L$ is a field of characteristic $p$. Furthermore, for some $\alpha_1,\alpha_2,\beta\in L$, the set
\begin{equation}
\label{eq:01}
C(\alpha_1,\alpha_2;\beta)=\left\{\l\in\Lbar\colon\text{ there exist $m,n\in\N$ such that $f_\l^m(\alpha_1)=f_\l^n(\alpha_2)=\beta$}\right\}
\end{equation} 
is assumed to be infinite. Our goal is to prove that one of the alternatives~(i)-(iii) in Theorem~\ref{thm:main} must hold. 
We split our analysis into several subsections.


\subsection{Reduction to a field of transcendence degree one}

We first prove that we may assume $\alpha_1$ is transcendental over $\Fp$.

\begin{proposition}
\label{prop:trdeg_1}
Under the hypotheses from Theorem~\ref{thm:main}, at least one of the following three alternatives must hold:
\begin{itemize}
\item[(A)] $\alpha_1^d=\alpha_2^d$.
\item[(B)] $\alpha_1$ is transcendental over $\Fp$, while $\alpha_2,\beta\in \overline{\Fp(\alpha_1)}\subseteq \Lbar$.
\item[(C)] $\alpha_1,\alpha_2,\beta\in\Fpbar$.
\end{itemize}
\end{proposition}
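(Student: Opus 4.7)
The plan is to chain Proposition~\ref{prop:trdeg_right} with Lemma~\ref{lem:trdeg_1_2} and then split on whether $\alpha_1$ is algebraic or transcendental over $\Fp$. First, I would apply Proposition~\ref{prop:trdeg_right} to the infinite set $C(\alpha_1,\alpha_2;\beta)$: this already forces either alternative~(A), i.e.\ $\alpha_1^d=\alpha_2^d$, in which case there is nothing left to prove, or else the inclusion $\beta\in\overline{\Fp(\alpha_1)}=\overline{\Fp(\alpha_2)}$. I would continue the argument under this second case.

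Next, I would invoke Lemma~\ref{lem:trdeg_1_2}, whose hypothesis (infinitude of $C(\alpha_1,\alpha_2;\beta)$) is in force throughout, to obtain the additional containment $\alpha_2\in\overline{\Fp(\alpha_1)}$. Combined with the conclusion from the previous paragraph, this means that in the remaining case all three of the points $\alpha_1,\alpha_2,\beta$ lie inside the field $\overline{\Fp(\alpha_1)}$.

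Finally, I would perform a dichotomy on $\alpha_1$ itself. If $\alpha_1$ is algebraic over $\Fp$, then $\overline{\Fp(\alpha_1)}=\Fpbar$, so the inclusions just produced place $\alpha_2,\beta\in\Fpbar$, which is exactly alternative~(C). If instead $\alpha_1$ is transcendental over $\Fp$, then the containments $\alpha_2,\beta\in\overline{\Fp(\alpha_1)}$ already secured are exactly the content of alternative~(B). Either way, one of the three listed alternatives is satisfied, which closes the proof.

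Since all the substantive work has already been absorbed into Lemma~\ref{lem:trdeg_1_2} (the ultrametric blow-up argument at a place where $\alpha_1$ has absolute value greater than $1$) and into Proposition~\ref{prop:trdeg_right} (the place-at-infinity estimate forcing $m=n$ together with the vanishing of the non-extremal coefficients from Lemma~\ref{lem:exact_form}), I do not foresee any genuine obstacle at this stage; Proposition~\ref{prop:trdeg_1} functions as a repackaging step that reorganizes the output of those two earlier reductions into the trichotomy (A)/(B)/(C) used in the remainder of the paper.
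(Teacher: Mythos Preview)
Your proposal is correct and essentially mirrors the paper's own argument: both proofs combine Lemma~\ref{lem:trdeg_1_2} and Proposition~\ref{prop:trdeg_right} and then split on whether $\alpha_1$ is algebraic or transcendental over $\Fp$. The only cosmetic difference is ordering---the paper case-splits on $\alpha_1$ first and then invokes the two results inside each branch, whereas you apply Proposition~\ref{prop:trdeg_right} globally before the dichotomy (and your separate appeal to Lemma~\ref{lem:trdeg_1_2} is in fact redundant, since the equality $\overline{\Fp(\alpha_1)}=\overline{\Fp(\alpha_2)}$ recorded in conclusion~(2) of Proposition~\ref{prop:trdeg_right} already gives $\alpha_2\in\overline{\Fp(\alpha_1)}$).
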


\begin{proof}
Either $\alpha_1$ belongs to $\Fpbar$ or $\alpha_1$ is is transcendental over $\Fp$. First, if $\alpha_1\in\Fpbar\subseteq \Lbar$, then $\alpha_2\in\Fpbar$ by Lemma~\ref{lem:trdeg_1_2}. Applying Proposition~\ref{prop:trdeg_right} yields  $\alpha_1^d=\alpha_2^d$ which is alternative~(A), or $\beta\in \Fpbar$ which is alternative~(C).

Next, assume $\alpha_1$ is transcendental over $\Fp$. Lemma~\ref{lem:trdeg_1_2} yields $\alpha_2\in\overline{\Fp(\alpha_1)}$. Furthermore, Proposition~\ref{prop:trdeg_right} yields $\beta\in\overline{\Fp(\alpha_1)}$ which is alternative~(B), or $\alpha_1^d=\alpha_2^d$ which is alternative~(A). The proof of Proposition~\ref{prop:trdeg_1} is complete in all cases.  \end{proof}

The alternatives~(A)~and~(C) from Proposition~\ref{prop:trdeg_1} match the alternatives~(i)~and~(iii) from the conclusion of Theorem~\ref{thm:main}.  In light of Lemma~\ref{lem:trdeg_1_2} and Proposition~\ref{prop:trdeg_right}, we may henceforth assume that:
\begin{equation}
\label{eq:assumption}
\alpha_1\notin\Fpbar\text{ and also, $\alpha_2,\beta\in\overline{\Fp(\alpha_1)}$.} 
\end{equation}
We will prove that under hypothesis~\eqref{eq:assumption}, at least one of the alternatives~(i)~or~(ii) from Theorem~\ref{thm:main} must hold, i.e.,
\begin{itemize}
\item[(i)]  $\alpha_1^d=\alpha_2^d$; or 
\item[(ii)] $d$ is a power of the characteristic $p$ of our field $L$, and $\alpha_1-\beta,\alpha_2-\beta\in\Fpbar$.
\end{itemize}


\subsection{Two sequences of heights tending to zero}

Working under the assumption~\eqref{eq:assumption}, let $t\colonequals\alpha_1$ (which is transcendental over $\Fpbar$) and also, let  
$$L_0\colonequals\Fpbar\left(t,t^{\frac{1}{p}},t^{\frac{1}{p^2}},\cdots, t^{\frac{1}{p^k}},\cdots \right).$$
Then, according to \eqref{eq:assumption}, there exists a finite extension $K$ of $L_0$ such that $\alpha_1,\alpha_2,\beta\in K$. As a consequence of Lemma~\ref{lem:trdeg_lambda}, we have that $C(\alpha_1,\alpha_2;\beta)\subset\Kbar$. Next, we obtain an easy consequence of Proposition~\ref{prop:useful_2}, which is key for our argument.

\begin{proposition}
\label{prop:useful}
There exist an infinite subset $\{\l_k\}$ in $\Kbar$ such that 
\begin{equation}
\label{eq:tending_to_0}
\lim_{k\to\infty}\hhat_{\l_k}(\alpha_1)=\lim_{k\to\infty}\hhat_{\l_k}(\alpha_2)=0.
\end{equation}
\end{proposition}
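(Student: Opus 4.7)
The plan is to combine the infinitude of $C(\alpha_1,\alpha_2;\beta)$, the finiteness statement of Lemma~\ref{lem:finitely_l}, and the height bound of Proposition~\ref{prop:useful_2}. The heights $h(\alpha_1)$, $h(\alpha_2)$, $h(\beta)$ are fixed finite numbers (since $\alpha_1,\alpha_2,\beta\in K$), so the right-hand side of Proposition~\ref{prop:useful_2} goes to $0$ as the iteration count grows; the only real task is to extract a sequence of parameters whose iteration counts simultaneously blow up.

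First, I would enumerate infinitely many distinct $\l\in C(\alpha_1,\alpha_2;\beta)$, and for each such $\l$ select a pair $(m(\l),n(\l))\in\N^2$ with $f_\l^{m(\l)}(\alpha_1)=f_\l^{n(\l)}(\alpha_2)=\beta$. I claim that one may extract a subsequence $\{\l_k\}$ along which both $m(\l_k)\to\infty$ and $n(\l_k)\to\infty$. Indeed, if $\{m(\l)\}$ were bounded over our chosen parameters, then by the pigeonhole principle some fixed value $m$ would be attained by infinitely many $\l$, contradicting Lemma~\ref{lem:finitely_l}, which asserts that the equation $P_{m,\alpha_1}(\l)=\beta$ has only finitely many solutions in $\Kbar$. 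Hence $\{m(\l)\}$ is unbounded and admits a subsequence tending to infinity; along this subsequence the same pigeonhole argument applied to $\{n(\l)\}$, using Lemma~\ref{lem:finitely_l} for $\alpha_2$, produces a further subsequence $\{\l_k\}$ along which $n(\l_k)\to\infty$ as well.

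Next, I would apply Proposition~\ref{prop:useful_2} twice, once to $\alpha_1$ with iteration count $m_k\colonequals m(\l_k)$ and once to $\alpha_2$ with iteration count $n_k\colonequals n(\l_k)$. This yields the inequalities
\begin{equation*}
0\le \hhat_{\l_k}(\alpha_1)\le \frac{2h(\alpha_1)+2h(\beta)}{d^{m_k}}
\quad\text{and}\quad
0\le \hhat_{\l_k}(\alpha_2)\le \frac{2h(\alpha_2)+2h(\beta)}{d^{n_k}}.
\end{equation*}
Since $h(\alpha_1), h(\alpha_2), h(\beta)$ are fixed finite real numbers and $d\ge 2$ while $m_k,n_k\to\infty$, both upper bounds tend to $0$, establishing~\eqref{eq:tending_to_0}.

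The only delicate point is the simultaneous unboundedness of $m(\l)$ and $n(\l)$, but this is settled cleanly by iterated pigeonholing against Lemma~\ref{lem:finitely_l}; no further obstacle arises, since the height bound from Proposition~\ref{prop:useful_2} has already done the serious work.
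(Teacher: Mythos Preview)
Your proof is correct and follows essentially the same approach as the paper: both use Lemma~\ref{lem:finitely_l} to force the iteration counts to blow up and then invoke Proposition~\ref{prop:useful_2} to squeeze the canonical heights to zero. Your iterated pigeonhole argument just makes explicit the subsequence extraction that the paper states in one line.
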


\begin{proof}
Using Lemma~\ref{lem:finitely_l}, for each $\ell\in\N$ there exist finitely many $\l\in\Kbar$ such that $f_\l^\ell(\alpha_1)=\beta$ (or $f_\l^\ell(\alpha_2)=\beta$). Therefore, there exists an infinite sequence $\{\l_k\}_{k\in\N}$ of elements in $C(\alpha_1,\alpha_2;\beta)\subset \Kbar$ for which the corresponding exponents $m_k,n_k\in\N$ satisfying
\begin{equation}
\label{eq:41}
f_{\l_k}^{m_k}(\alpha_1)=f_{\l_k}^{n_k}(\alpha_2)=\beta,
\end{equation}
must also satisfy
\begin{equation}
\label{eq:42}
\lim_{k\to\infty} m_k=\lim_{k\to\infty}n_k=\infty.
\end{equation}
Now, applying Proposition~\ref{prop:useful_2} to the two relations in ~\eqref{eq:41} yields the following inequalities:
\begin{equation}
\label{eq:43}
\hhat_{\l_k}(\alpha_1)\le \frac{2h(\alpha_1)+2h(\beta)}{d^{m_k}}\text{ and }\hhat_{\l_k}(\alpha_2)\le \frac{2h(\alpha_2)+2h(\beta)}{d^{n_k}}.
\end{equation}
Inequalities \eqref{eq:43} combined with equation~\eqref{eq:42} lead to the desired conclusion.
\end{proof}


\subsection{Equality for all local heights}
First, for each $v\in\Omega_K$, we fix an extension of $|\cdot|_v$ to an absolute value on the entire $\Kbar$; also, we define the local canonical heights $\hhat_{\l,v}$ as in \eqref{eq:local_canonical}. Next, we state \cite[Theorem~4.1]{1}, which is instrumental in our proof.

\begin{theorem}
\label{thm:4.1}
With the above notation for $K,\alpha_1,\alpha_2$, assume there exist infinitely many $\{\l_k\}_{k\in\N}$ such that $\lim_{k\to\infty}\hhat_{\l_k}(\alpha_1)=\lim_{k\to\infty}\hhat_{\l_k}(\alpha_2) = 0$. Then for each  $\l\in \Kbar$ and for each $v\in\Omega_{K}$, we have that 
\begin{equation}
\label{eq:44}
\hhat_{\l,v}(\alpha_1)=\hhat_{\l,v}(\alpha_2).
\end{equation}
\end{theorem}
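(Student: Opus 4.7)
The plan is potential-theoretic: view each local canonical height $\hhat_{\lambda,v}(\alpha_i)$, as a function of $\lambda$, as a Green function on the Berkovich parameter line at $v$, and use the small-height sequence $\{\l_k\}$ to equidistribute Galois orbits of $\l_k$ against the corresponding equilibrium measures; equality of these measures for $i=1,2$ then forces the Green functions to coincide pointwise on the parameter space.

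First, fix $v\in\Omega_K$ (and an extension of $|\cdot|_v$ to $\Kbar$) and set $G_{v,i}(\lambda)\colonequals \hhat_{\lambda,v}(\alpha_i)$. By Lemma~\ref{lem:easy}, $G_{v,i}$ is a continuous nonnegative function of $\lambda$ on $\Aberk{v}$ that vanishes precisely where $\alpha_i$ has bounded $f_\lambda$-orbit at $v$ and satisfies $G_{v,i}(\lambda)=\tfrac{\log|\lambda|_v}{d}$ once $|\lambda|_v$ is sufficiently large. Standard potential theory on the Berkovich line identifies $G_{v,i}$ as the Green function of a compact subset of $\Aberk{v}$, whose associated Laplacian is a probability measure $\mu_{v,i}$.

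Second, Lemma~\ref{lem:canonical_heights} decomposes the global canonical height as an average over Galois conjugates of $\sum_v G_{v,i}(\sigma(\lambda))$, so non-negativity of each $G_{v,i}$ combined with $\hhat_{\l_k}(\alpha_i)\to 0$ forces the corresponding averaged sum at each place $v$ to tend to $0$, separately for $i=1,2$. An adelic equidistribution theorem in the style of Chambert-Loir and Baker--Rumely, adapted to the function-field / positive-characteristic setting (the reduction~\eqref{eq:assumption}, which guarantees that $\alpha_1$ is transcendental over $\Fp$, is what rules out the isotrivial obstruction endemic to characteristic $p$), then yields that the Galois orbits of $\l_k$ equidistribute with respect to $\mu_{v,i}$ at every place $v$, separately for $i=1,2$.

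Since the \emph{same} sequence $\{\l_k\}$ equidistributes to both $\mu_{v,1}$ and $\mu_{v,2}$, we conclude $\mu_{v,1}=\mu_{v,2}$, and hence $\Delta(G_{v,1}-G_{v,2})=0$ on $\Aberk{v}$. Both functions have identical growth $\tfrac{\log|\lambda|_v}{d}+O(1)$ near the Gauss/infinity point, so their difference is a bounded harmonic function on $\Aberk{v}$, necessarily constant; evaluating along $\l_k$, where both averaged sums vanish in the limit, forces this constant to be zero. This yields $G_{v,1}\equiv G_{v,2}$ on $\Aberk{v}$, i.e.\ $\hhat_{\lambda,v}(\alpha_1)=\hhat_{\lambda,v}(\alpha_2)$ for every $\l\in\Kbar$ and every $v\in\Omega_K$. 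The principal obstacle is the equidistribution input: Yuan's theorem is the standard tool over number fields, but in positive characteristic one must either import a tailored analogue (as in the second author's prior work on related problems) or establish one by hand, exploiting the explicit escape-rate structure of the family $f_\l(z)=z^d+\l$ to control the semi-positive metrics involved.
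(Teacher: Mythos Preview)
The paper does not prove Theorem~\ref{thm:4.1}; it is quoted from \cite[Theorem~4.1]{1}, so there is no in-paper argument to compare against. Your outline is exactly the Baker--DeMarco strategy that \cite{1} adapts to the positive-characteristic function-field setting: interpret $\lambda\mapsto\hhat_{\lambda,v}(\alpha_i)$ as a continuous subharmonic potential on the Berkovich parameter line at $v$, invoke an adelic equidistribution theorem for the Galois orbits of the small-height points $\l_k$ to conclude $\mu_{v,1}=\mu_{v,2}$ at every place, and then recover the Green functions from their Laplacians plus their asymptotics at infinity. So as a sketch your proposal matches the cited proof.

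One minor correction: the step ``evaluating along $\l_k$ \dots\ forces this constant to be zero'' is unnecessary and, as written, not quite right, since for a fixed place $v$ the local averages $\tfrac{1}{[K(\l_k):K]}\sum_\sigma G_{v,i}(\sigma(\l_k))$ need not tend to $0$ individually (only their sum over all $v$ does, by Lemma~\ref{lem:canonical_heights}). You already have everything you need from Lemma~\ref{lem:easy}~\ref{lem:easy:item:iii}: for $|\l|_v>\max\{1,|\alpha_1|_v^d,|\alpha_2|_v^d\}$ one has $G_{v,1}(\l)=G_{v,2}(\l)=\tfrac{\log|\l|_v}{d}$ \emph{exactly}, so the harmonic difference $G_{v,1}-G_{v,2}$ is not merely bounded but vanishes on a neighbourhood of infinity in $\Aberk{v}$, hence is identically zero. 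You are right that the equidistribution input is the genuine technical burden and that this is where the positive-characteristic adaptation lives; that is precisely what is carried out in \cite{1}.
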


Proposition~\ref{prop:useful} yields the existence of an infinite sequence $\{\l_k\}_{k\in\N}$ such that the hypotheses in Theorem~\ref{thm:4.1} are met. Therefore, we conclude that equation~\eqref{eq:44} holds for each absolute value $|\cdot|_v$ and for each $\l\in\Kbar$. 


\subsection{Finishing the proof of Theorem~\ref{thm:main} in the special case when the degree is a power of the characteristic}

In this Subsection, we work under the additional hypothesis that $d=p^\ell$ for some $\ell\in\N$; also, we know that equation~\eqref{eq:44} holds. With this new assumption, we will prove that 
\begin{itemize}
\item either $\alpha_1=\alpha_2$, i.e., conclusion~(i) from Theorem~\ref{thm:main} holds.
\item or $\alpha_1-\beta,\alpha_2-\beta\in\Fpbar$, i.e., conclusion~(ii) from Theorem~\ref{thm:main} holds. 
\end{itemize}
Since $d=p^\ell$, the iterates of $f_\l(z)=z^d+\ell$ have the following explicit form:
\begin{equation}
\label{eq:121}
f_\l^n(z)=z^{p^{\ell n}}+ \sum_{i=0}^{n-1}\l^{p^{i\ell}},
\end{equation}
for each $n\in\N$. The formula~\eqref{eq:121} will help in the proof of the following result.
\begin{proposition}
\label{prop:diff_fpbar}
With the above assumptions, we have $\alpha_1-\alpha_2\in\Fpbar$.
\end{proposition}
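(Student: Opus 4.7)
The plan is to exploit the local canonical height equality $\hhat_{\l,v}(\alpha_1)=\hhat_{\l,v}(\alpha_2)$, valid for every $\l\in\Kbar$ and every $v\in\Omega_K$ by Theorem~\ref{thm:4.1}, at a carefully chosen value of $\l$. Specifically, I would take $\l_0\colonequals \alpha_1-\alpha_1^d\in K$, so that $f_{\l_0}(\alpha_1)=\alpha_1$. Since $\alpha_1$ is a fixed point of $f_{\l_0}$, we have $f_{\l_0}^n(\alpha_1)=\alpha_1$ for all $n$, which forces $\hhat_{\l_0,v}(\alpha_1)=0$ at every place $v\in\Omega_K$; the local height equality then yields $\hhat_{\l_0,v}(\alpha_2)=0$ for every $v\in\Omega_K$ as well.

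Setting $\delta\colonequals \alpha_2-\alpha_1\in K$, the characteristic $p$ hypothesis together with $d=p^\ell$ gives the identity $(x+y)^d=x^d+y^d$. An easy induction on $n$ then shows
\begin{equation*}
f_{\l_0}^n(\alpha_2)=\alpha_1+\delta^{d^n}\quad\text{for every }n\ge 0,
\end{equation*}
since the inductive step reads $(\alpha_1+\delta^{d^n})^d+\l_0=\alpha_1^d+\delta^{d^{n+1}}+(\alpha_1-\alpha_1^d)=\alpha_1+\delta^{d^{n+1}}$.

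Suppose for contradiction that $\delta\notin\Fpbar$. Since $K$ is a finite extension of $L_0=\Fpbar(\alpha_1)^{\mathrm{perf}}$, whose ``constants'' (elements integral at every place) are precisely $\Fpbar$, any element of $K\setminus\Fpbar$ must have a pole at some place of $K$: indeed, taking the norm down to $L_0$ and applying the analogue of equation~\eqref{eq:9} for $L_0/\Fpbar$ (via the bijection between places of $L_0$ and closed points of $\mathbb{P}^1_{\Fpbar}$) forces the existence of $v_0\in\Omega_K$ with $|\delta|_{v_0}>1$. For all sufficiently large $n$ we then have $|\delta|_{v_0}^{d^n}>|\alpha_1|_{v_0}$, so the non-archimedean triangle inequality yields $|f_{\l_0}^n(\alpha_2)|_{v_0}=|\delta|_{v_0}^{d^n}$, whence
\begin{equation*}
\hhat_{\l_0,v_0}(\alpha_2)=\lim_{n\to\infty}\frac{d^n\log|\delta|_{v_0}}{d^n}=\log|\delta|_{v_0}>0,
\end{equation*}
contradicting $\hhat_{\l_0,v_0}(\alpha_2)=0$. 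Hence $\delta\in\Fpbar$, which is the desired conclusion.

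The main conceptual step (rather than a technical obstacle) is the choice of $\l_0$: by making $\alpha_1$ a fixed point of $f_{\l_0}$, the entire forward orbit of $\alpha_2$ collapses to the transparent sequence $\alpha_1+\delta^{d^n}$, so any pole of $\delta$ is directly amplified into a positive local canonical height, which is precisely what the equality from Theorem~\ref{thm:4.1} forbids.
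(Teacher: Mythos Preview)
Your proof is correct and takes a genuinely different route from the paper's. Both arguments exploit the local height equality~\eqref{eq:44} at a specially chosen parameter, together with the additivity of $z\mapsto z^{p^\ell}$, but the choice of parameter differs. The paper picks $\l$ so that $f_\l^n(\alpha_2)=0$ for a large $n$; it must then invoke Lemma~\ref{lem:easy} to compute $|\l|_v$ and obtain $\hhat_{\l,v}(\alpha_2)=\log|\alpha_2|_v/d^{n+2}$, and separately derive $f_\l^n(\alpha_1)=(\alpha_1-\alpha_2)^{d^n}$ to get $\hhat_{\l,v}(\alpha_1)=\log|\alpha_1-\alpha_2|_v$, with an extra ``without loss of generality $|\alpha_2|_v>1$'' reduction along the way. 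Your choice $\l_0=\alpha_1-\alpha_1^d$ makes $\alpha_1$ a fixed point, so $\hhat_{\l_0,v}(\alpha_1)=0$ at \emph{every} place for free; the orbit of $\alpha_2$ then becomes $\alpha_1+\delta^{d^n}$, from which $\hhat_{\l_0,v_0}(\alpha_2)=\log|\delta|_{v_0}$ drops out directly from the definition of the local height, with no appeal to Lemma~\ref{lem:easy} and no case split. Your argument is shorter and more transparent; the paper's version has the minor advantage that its parameter $\l$ is not tied to $\alpha_1$ specifically, but that plays no role here. The one step you justify somewhat informally---that $\delta\in K\setminus\Fpbar$ forces $|\delta|_{v_0}>1$ at some place---is exactly the same fact the paper uses without proof in the first line of its argument, so no additional rigor is lost.
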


\begin{proof}
We argue by contradiction. Assume $\alpha_1-\alpha_2\notin\Fpbar$, which means there exists some $v\in\Omega_K$ such that $|\alpha_1-\alpha_2|_v>1$. Since $|\alpha_1-\alpha_2|_v\le \max\{|\alpha_1|_v,|\alpha_2|_v\}$, we may assume without loss of generality that $|\alpha_2|_v>1$.  We choose $n\in\N$ large enough so that
\begin{equation}
\label{eq:60}
|\alpha_1-\alpha_2|_v^{d^n}>|\alpha_2|_v.
\end{equation}
We let $\lambda\in\Kbar$ such that $f_\l^n(\alpha_2)=0$. By equation~\eqref{eq:121}, we have: 
\begin{equation}
\label{eq:61}
\alpha_2^{p^{n\ell}}+\l^{p^{\ell(n-1)}}+ \l^{p^{\ell(n-2)}} + \cdots + \l^{p^\ell}+\l=0.
\end{equation} 
First, we compute the local canonical height of $\alpha_2$. Since $|\alpha_2|_v>1$, equation~\eqref{eq:61} implies:
\begin{equation}
\label{eq:62}
|\l|_v=|\alpha_2|_v^{\frac{1}{p^\ell}}=|\alpha_2|_v^{\frac{1}{d}}.
\end{equation}
As $|f_\l^n(\alpha_2)|_v=0<|\l|_v^{1/d}$, we apply Lemma~\ref{lem:easy}~\ref{lem:easy:item:iii} and use \eqref{eq:62} to find:
\begin{equation}
\label{eq:63}
\hhat_{\l,v}\left(f_\l^n(\alpha_2)\right)=\frac{\log|\l|_v}{d}= \frac{\log|\alpha_2|_v}{d^2}.
\end{equation}
Combining equations \eqref{eq:63} and \eqref{eq:local_canonical_2}, we obtain:
\begin{equation}
\label{eq:64}
\hhat_{\l,v}(\alpha_2)=\frac{\log|\alpha_2|_v}{d^{n+2}}.
\end{equation}
Next, we compute the local canonical height of $\alpha_1$. Using equations~\eqref{eq:121} and \eqref{eq:61}, we express $f_\l^n(\alpha_1)$ as follows:
\begin{equation}
\label{eq:65}
f_\l^n(\alpha_1)=\alpha_1^{p^{n\ell}}+\sum_{i=0}^{n-1}\l^{p^{i\ell}}= \alpha_1^{p^{n\ell}} -\alpha_2^{p^{n\ell}}= \left(\alpha_1-\alpha_2\right)^{d^n}.
\end{equation} 
Using equations \eqref{eq:65}, \eqref{eq:60} and \eqref{eq:62}, we deduce that 
\begin{equation}
\label{eq:66}
\left|f_\l^n(\alpha_1)\right|_v=|\alpha_1-\alpha_2|_v^{d^n}>|\alpha_2|_v> |\alpha_2|_v^{\frac{1}{d}}=|\l|_v.
\end{equation}
Equation \eqref{eq:66} allows us to apply Lemma~\ref{lem:easy}~\ref{lem:easy:item:ii}, which yields
\begin{equation}
\label{eq:67}
\hhat_{\l,v}\left(f_\l^n(\alpha_1)\right)=\log\left|f_\l^n(\alpha_1)\right|_v= d^n\log|\alpha_1-\alpha_2|_v.
\end{equation}
From equations \eqref{eq:67} and \eqref{eq:local_canonical_2}, it follows that
\begin{equation}
\label{eq:68}
\hhat_{\l,v}(\alpha_1)=\log|\alpha_1-\alpha_2|_v.
\end{equation} 
Finally, comparing the local heights from equations~\eqref{eq:64}~and~\eqref{eq:68}, our initial choice of $n$ in inequality~\eqref{eq:60} shows that $\hhat_{\l,v}(\alpha_1)>\hhat_{\l,v}(\alpha_2)$. This contradicts equation~\eqref{eq:44}. Therefore, there is no $v\in\Omega_K$ such that $|\alpha_1-\alpha_2|_v>1$. Hence, $\alpha_1-\alpha_2\in\Fpbar$, as desired.
\end{proof}

We recall the existence of the infinite sequence $\{\l_k\}_{k\in\N}$ in $\Kbar$ for which there exist corresponding exponents $m_k,n_k\in\N$ such that 
\begin{equation}
\label{eq:75}
f_{\l_k}^{m_k}(\alpha_1)=f_{\l_k}^{n_k}(\alpha_2)=\beta
\end{equation}
for each $k\in \N$. The next lemma shows that if $m_k=n_k$ for some $k\in\N$, then conclusion~(i) from Theorem~\ref{thm:main} must hold.  
\begin{lemma}
\label{lem:easy_perm}
If $m_k=n_k$ for some $k\in\N$, then $\alpha_1=\alpha_2$.
\end{lemma}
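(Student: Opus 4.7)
The plan is to exploit the explicit formula~\eqref{eq:121} for the iterates of $f_\l$ when $d=p^\ell$, which decouples the dependence on the starting point from the dependence on the parameter. Since $m_k=n_k$, call this common exponent $N$, and write out both equalities using~\eqref{eq:121}:
\begin{equation*}
\alpha_1^{p^{\ell N}}+\sum_{i=0}^{N-1}\l_k^{p^{i\ell}}=\beta=\alpha_2^{p^{\ell N}}+\sum_{i=0}^{N-1}\l_k^{p^{i\ell}}.
\end{equation*}
The $\l_k$-sums on the two sides are identical and cancel, leaving $\alpha_1^{p^{\ell N}}=\alpha_2^{p^{\ell N}}$.

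Since $L$ has characteristic $p$, the Frobenius endomorphism $x\mapsto x^p$ is injective on $\Kbar$, and hence so is its iterate $x\mapsto x^{p^{\ell N}}$. Therefore $\alpha_1=\alpha_2$, as required. Equivalently, one can write $(\alpha_1-\alpha_2)^{p^{\ell N}}=0$, which forces $\alpha_1-\alpha_2=0$.

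There is no real obstacle here: the lemma is essentially the observation that in characteristic $p$, the only way two additive polynomials of the form $z^{p^{\ell N}}+c$ can take the same value at $\alpha_1$ and $\alpha_2$ is if $\alpha_1=\alpha_2$, and this matches the comment in Remark~\ref{rem:not_symmetric} that, when $d=p^\ell$, alternative (A) of Theorem~\ref{thm:main_0} collapses to $\alpha_1=\alpha_2$.
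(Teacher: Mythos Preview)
Your proof is correct and is essentially the same argument as the paper's: both rest on the injectivity of $z\mapsto z^{p^{\ell N}}$ in characteristic $p$. The only cosmetic difference is that the paper phrases this as ``$f_\l$ is a permutation polynomial on $\Kbar$, hence so is $f_\l^{m_k}$,'' while you unpack the explicit formula~\eqref{eq:121} to cancel the $\l_k$-sums directly and then invoke Frobenius injectivity.
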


\begin{proof}
Since $d=p^\ell$, we have that $f_\l(z)$ is a permutation polynomial on $\Kbar$ (for each $\l\in\Kbar$); it follows that $f_\l^m$ also induces a permutation on $\Kbar$ for each $m\in\N$. Therefore, if $m_k=n_k$, the condition $f_{\l_k}^{m_k}(\alpha_1)=f_{\l_k}^{n_k}(\alpha_2)$ becomes $f_{\l_k}^{m_k}(\alpha_1)=f_{\l_k}^{m_k}(\alpha_2)$, which implies $\alpha_1=\alpha_2$.  
\end{proof}

Lemma~\ref{lem:easy_perm} thus shows that alternative~(i) from the conclusion of Theorem~\ref{thm:main} holds if $m_k=n_k$ for some $k\in\N$. From now on, we assume that $m_k\ne n_k$ for all $k\in\N$.

The following result is an easy application of the formula for $f_\l^n(z)$ from equation~\eqref{eq:121}. Furthermore, Lemma~\ref{lem:prep_0} will also be used in Section~\ref{sec:ii} in the proof of Theorem~\ref{thm:converse_ii}. 
\begin{lemma}
\label{lem:prep_0}
Let $\alpha,\l\in\Kbar$ and let $\gamma\in\Fpbar$. If for some $m\in\N$ the following holds:
\begin{equation}
\label{eq:72}
f_\l^m(\alpha)=\alpha+\gamma,
\end{equation}
then $\alpha$ and $\alpha+\gamma$ are periodic under the action of $f_\l$.
\end{lemma}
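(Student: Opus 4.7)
The plan is to exploit the explicit formula \eqref{eq:121} for iterates of $f_\lambda$, which shows that $f_\lambda^m$ is an $\mathbb{F}_{p^{m\ell}}$-additive polynomial plus a constant. The key identity is that for any $z,w$, $f_\lambda^m(z)-f_\lambda^m(w) = (z-w)^{p^{m\ell}}$.

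First, I will compute $f_\lambda^{km}(\alpha)$ by induction on $k$. The base case $k=1$ is the hypothesis. For the inductive step, applying $f_\lambda^m$ to $f_\lambda^{km}(\alpha) = \alpha + \sum_{j=0}^{k-1}\gamma^{p^{jm\ell}}$ and using characteristic $p$ (so that $p$-th powers distribute over sums) together with \eqref{eq:121} gives
\begin{equation*}
f_\lambda^{(k+1)m}(\alpha)=\alpha^{p^{m\ell}}+\sum_{j=0}^{k-1}\gamma^{p^{(j+1)m\ell}}+\sum_{i=0}^{m-1}\lambda^{p^{i\ell}}=\alpha+\sum_{j=0}^{k}\gamma^{p^{jm\ell}},
\end{equation*}
where in the last equality I substitute $\alpha^{p^{m\ell}}+\sum_{i=0}^{m-1}\lambda^{p^{i\ell}}=f_\lambda^m(\alpha)=\alpha+\gamma$.

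Second, since $\gamma\in\overline{\mathbb{F}_p}$, the Frobenius orbit of $\gamma$ under $x\mapsto x^{p^{m\ell}}$ is finite; let $r\in\N$ be the smallest integer such that $\gamma^{p^{rm\ell}}=\gamma$, and set $S\colonequals\sum_{j=0}^{r-1}\gamma^{p^{jm\ell}}$. A direct check shows $S^{p^{m\ell}}=S$, so $S\in\mathbb{F}_{p^{m\ell}}$. Plugging $k=r$ into the induction formula yields $f_\lambda^{rm}(\alpha)=\alpha+S$.

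Third, I iterate this last identity. Applying $f_\lambda^{rm}$ repeatedly (using again that $f_\lambda^{rm}(z)-f_\lambda^{rm}(w)=(z-w)^{p^{rm\ell}}$ together with $S^{p^{rm\ell}}=S$) gives $f_\lambda^{krm}(\alpha)=\alpha+kS$ for every $k\in\N$. Setting $k=p$ and using $p\cdot S=0$ in characteristic $p$, I conclude $f_\lambda^{prm}(\alpha)=\alpha$, so $\alpha$ is periodic under $f_\lambda$. Since $\alpha+\gamma=f_\lambda^m(\alpha)$ lies in the forward orbit of the periodic point $\alpha$, it is also periodic. There is no real obstacle here; the argument is purely algebraic and the only subtlety is recognizing that $S$ lies in a finite subfield fixed by $x\mapsto x^{p^{m\ell}}$, which then lets the characteristic $p$ cancellation $pS=0$ do the work.
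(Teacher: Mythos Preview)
Your proof is correct and follows essentially the same approach as the paper: both begin with the inductive formula $f_\lambda^{km}(\alpha)=\alpha+\sum_{j=0}^{k-1}\gamma^{p^{jm\ell}}$ derived from \eqref{eq:121}. The only difference is in the endgame: the paper observes that this sequence of differences lies in a fixed finite field $\mathbb{F}_{p^r}$ (where $\gamma\in\mathbb{F}_{p^r}$), invokes pigeonhole to get preperiodicity, and then uses that $f_\lambda$ is a bijection on $\overline{K}$ to upgrade preperiodic to periodic; you instead pin down an explicit period by showing $S\colonequals\sum_{j=0}^{r-1}\gamma^{p^{jm\ell}}\in\mathbb{F}_{p^{m\ell}}$ and hence $f_\lambda^{prm}(\alpha)=\alpha$. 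Your route is slightly more constructive and avoids the final appeal to bijectivity, but the core idea is the same.
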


\begin{proof}
Equation~\eqref{eq:121} and our hypothesis~\eqref{eq:72} together yield:
\begin{equation}
\label{eq:71}
f_\l^{2m}(\alpha)= f_\l^m(\alpha+\gamma)=\alpha^{p^{m\ell}}+\gamma^{p^{m\ell}}+\sum_{i=0}^{m-1} \l^{p^{i\ell}}=\gamma^{p^{m\ell}}+f_\l^m(\alpha)=\gamma^{p^{m\ell}}+\gamma+\alpha.
\end{equation}
An easy induction (iterating the computation from \eqref{eq:71} and employing equation~\eqref{eq:72}) shows that for each $n\ge 1$,
\begin{equation}
\label{eq:73}
f_\l^{mn}(\alpha+\gamma)=\sum_{j=0}^{n-1}\gamma^{p^{jm\ell}}+\alpha.
\end{equation}
We choose $r\in\N$ such that $\gamma\in \mathbb{F}_{p^r}$. Equation~\eqref{eq:73} then implies that for each $n\ge 1$,
\begin{equation}
\label{eq:74}
f_\l^{mn}(\alpha+\gamma)-\alpha\in \mathbb{F}_{p^r}.
\end{equation}
Equation~\eqref{eq:74} shows that the sequence $\{f_\l^{mn}(\alpha+\gamma)-\alpha\}_{n\geq 1}$ takes only finitely many values. Therefore, there exist positive integers $\ell_1<\ell_2$ such that $f_\l^{m\ell_1}(\alpha+\gamma)=f_\l^{m\ell_2}(\alpha+\gamma)$. Hence, $\alpha+\gamma$ is preperiodic under the action of $f_\l$. By equation~\eqref{eq:72}, $\alpha$ must also be preperiodic under the action of $f_\l$. Finally, because $f_\l$ induces a permutation on $\Lbar$ (as explained in the proof of Lemma~\ref{lem:easy_perm}), any preperiodic point is necessarily periodic. Thus, both $\alpha$ and $\alpha+\gamma$ are periodic points for $f_\l$, as desired.
\end{proof}

The following result is an immediate consequence of Lemma~\ref{lem:prep_0}.
\begin{lemma}
\label{lem:prep}
With the notation as in equation~\eqref{eq:75}, for each $k\in\N$, the points $\alpha_1,\alpha_2$, and $\beta$ are periodic under the action of $f_{\l_k}$.
\end{lemma}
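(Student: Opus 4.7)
The plan is to leverage Lemma~\ref{lem:prep_0} applied to $\beta$ itself, then propagate periodicity back to the preimages $\alpha_1$ and $\alpha_2$ using the fact that $f_{\l_k}$ is a permutation on $\Lbar$.

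Fix $k\in\N$. By Proposition~\ref{prop:diff_fpbar} we have $\gamma\colonequals \alpha_2-\alpha_1\in\Fpbar$. Since we have reduced to the case $m_k\ne n_k$, we may assume without loss of generality that $m_k<n_k$ (the other case is symmetric, using $\gamma'=\alpha_1-\alpha_2\in\Fpbar$ instead). Applying $f_{\l_k}^{n_k-m_k}$ to both sides of $f_{\l_k}^{m_k}(\alpha_1)=\beta$ gives
\begin{equation*}
f_{\l_k}^{n_k-m_k}(\beta)=f_{\l_k}^{n_k}(\alpha_1).
\end{equation*}
On the other hand, the explicit formula~\eqref{eq:121} and the identity $\alpha_2=\alpha_1+\gamma$ yield
\begin{equation*}
\beta=f_{\l_k}^{n_k}(\alpha_2)=(\alpha_1+\gamma)^{p^{n_k\ell}}+\sum_{i=0}^{n_k-1}\l_k^{p^{i\ell}}=\alpha_1^{p^{n_k\ell}}+\gamma^{p^{n_k\ell}}+\sum_{i=0}^{n_k-1}\l_k^{p^{i\ell}}=f_{\l_k}^{n_k}(\alpha_1)+\gamma^{p^{n_k\ell}},
\end{equation*}
so that $f_{\l_k}^{n_k}(\alpha_1)=\beta-\gamma^{p^{n_k\ell}}$. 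Combining the two displays, we get
\begin{equation*}
f_{\l_k}^{n_k-m_k}(\beta)=\beta+\widetilde{\gamma},\qquad \widetilde{\gamma}\colonequals -\gamma^{p^{n_k\ell}}\in\Fpbar.
\end{equation*}
Since $n_k-m_k\in\N$, Lemma~\ref{lem:prep_0} (applied with $\alpha=\beta$ and $\gamma=\widetilde{\gamma}$) shows that $\beta$ is periodic under $f_{\l_k}$.

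It remains to transfer periodicity from $\beta$ to $\alpha_1$ and $\alpha_2$. Let $q\in\N$ satisfy $f_{\l_k}^q(\beta)=\beta$. Since $d=p^\ell$, the map $f_{\l_k}$ is a bijection on $\Lbar$ (as noted in the proof of Lemma~\ref{lem:easy_perm}), and hence so is each iterate $f_{\l_k}^{m_k}$. From $f_{\l_k}^{m_k}(\alpha_1)=\beta$ we obtain
\begin{equation*}
f_{\l_k}^{m_k}\bigl(f_{\l_k}^{q}(\alpha_1)\bigr)=f_{\l_k}^{q}(\beta)=\beta=f_{\l_k}^{m_k}(\alpha_1),
\end{equation*}
and injectivity of $f_{\l_k}^{m_k}$ forces $f_{\l_k}^{q}(\alpha_1)=\alpha_1$, so $\alpha_1$ is periodic. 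An identical argument using $f_{\l_k}^{n_k}(\alpha_2)=\beta$ shows that $\alpha_2$ is periodic, completing the proof.

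No serious obstacle is anticipated: once Lemma~\ref{lem:prep_0} is in hand, the only subtlety is ensuring that the right-hand side of the key identity lies in $\Fpbar$, which is automatic because $\Fpbar$ is closed under taking $p^{n_k\ell}$-th powers.
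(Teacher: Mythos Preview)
Your proof is correct and follows essentially the same approach as the paper: both arguments reduce to a single application of Lemma~\ref{lem:prep_0} after a short manipulation using the permutation property of $f_{\l_k}$ and the explicit formula~\eqref{eq:121}. The only cosmetic difference is the order: the paper uses injectivity to obtain $f_{\l_k}^{n_k-m_k}(\alpha_2)=\alpha_1$ directly, applies Lemma~\ref{lem:prep_0} to $\alpha_2$, and then deduces periodicity of $\beta$ as a forward image; you instead establish periodicity of $\beta$ first and pull it back to $\alpha_1,\alpha_2$ via injectivity.
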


\begin{proof}
Let $k\in\N$. By our assumption that $m_k\ne n_k$, we may assume without loss of generality that $m_k<n_k$. Because $f_\l(z)$ induces a permutation on $\Kbar$, the relation~\eqref{eq:75} implies
\begin{equation}
\label{eq:76}
\alpha_1=f_{\l_k}^{n_k-m_k}(\alpha_2).
\end{equation}
Since $n_k-m_k\ge 1$ and $\alpha_1-\alpha_2\in\Fpbar$ by Proposition~\ref{prop:diff_fpbar},  equation~\eqref{eq:76} provides the hypothesis needed to apply Lemma~\ref{lem:prep_0}. The lemma then implies that $\alpha_1$ and $\alpha_2$ are periodic under the action of $f_{\l_k}$. From~\eqref{eq:75}, $\beta$ is also periodic under the action of $f_{\l_k}$. \end{proof}

We already proved that $\alpha_1-\alpha_2\in\Fpbar$; the following result shows that $\alpha_1-\beta\in\Fpbar$ as well, which gives the complete picture for alternative~(ii) from the conclusion of Theorem~\ref{thm:main}.
\begin{proposition}
\label{prop:diff_fpbar_2}
We must have $\alpha_1-\beta\in\Fpbar$.
\end{proposition}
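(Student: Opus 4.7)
The plan is to mimic the argument of Proposition~\ref{prop:diff_fpbar}, with the pair $(\alpha_1,\beta)$ replacing the pair $(\alpha_1,\alpha_2)$. The preparatory step is to establish the analogue of equation~\eqref{eq:44}, namely $\hhat_{\l,v}(\alpha_1)=\hhat_{\l,v}(\beta)$ for every $\l\in\Kbar$ and every $v\in\Omega_K$. To invoke Theorem~\ref{thm:4.1} for this pair, I will reuse the sequence $\{\l_k\}$ from Proposition~\ref{prop:useful}: we already have $\lim_{k\to\infty}\hhat_{\l_k}(\alpha_1)=0$, and under the running assumption $m_k\ne n_k$ for all $k$ (justified by Lemma~\ref{lem:easy_perm}), Lemma~\ref{lem:prep} tells us that $\beta$ is periodic under $f_{\l_k}$, whence $\hhat_{\l_k}(\beta)=0$ for every $k$. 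So the hypotheses of Theorem~\ref{thm:4.1} are met for $(\alpha_1,\beta)$.

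With that equality in hand, I argue by contradiction: assume $\alpha_1-\beta\notin\Fpbar$ and choose $v\in\Omega_K$ with $|\alpha_1-\beta|_v>1$. The ultrametric inequality forces $\max\{|\alpha_1|_v,|\beta|_v\}>1$, and by symmetry I may assume $|\beta|_v>1$ (otherwise, swap the roles of $\alpha_1$ and $\beta$ in what follows). I select $n\in\N$ large enough that $|\alpha_1-\beta|_v^{d^n}>|\beta|_v$ and then pick $\l\in\Kbar$ with $f_\l^n(\beta)=0$. From the explicit iteration formula~\eqref{eq:121} this forces $|\l|_v=|\beta|_v^{1/d}$, and Lemma~\ref{lem:easy}~\ref{lem:easy:item:iii} combined with equation~\eqref{eq:local_canonical_2} gives $\hhat_{\l,v}(\beta)=\log|\beta|_v/d^{n+2}$. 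For the same $\l$, substituting $\sum_{i=0}^{n-1}\l^{p^{i\ell}}=-\beta^{d^n}$ into~\eqref{eq:121} for $\alpha_1$ and using Frobenius (since $d^n=p^{n\ell}$) yields $f_\l^n(\alpha_1)=(\alpha_1-\beta)^{d^n}$. The choice of $n$ delivers $|f_\l^n(\alpha_1)|_v=|\alpha_1-\beta|_v^{d^n}>|\beta|_v>|\l|_v>1$, so Lemma~\ref{lem:easy}~\ref{lem:easy:item:ii} together with~\eqref{eq:local_canonical_2} yields $\hhat_{\l,v}(\alpha_1)=\log|\alpha_1-\beta|_v$. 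Since $\log|\alpha_1-\beta|_v>\log|\beta|_v/d^n>\log|\beta|_v/d^{n+2}$, the two local heights differ, contradicting the equality from the first paragraph.

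The only conceptual subtlety is the initial extension of Theorem~\ref{thm:4.1} to the pair $(\alpha_1,\beta)$, which crucially uses that $\beta$ is periodic under each $f_{\l_k}$ (so its canonical height vanishes identically along the sequence, not merely tends to zero). This is why the reduction to $m_k\ne n_k$ performed earlier via Lemma~\ref{lem:easy_perm}, feeding into Lemma~\ref{lem:prep}, was essential. Once that hypothesis is verified, everything else is a line-by-line parallel of the proof of Proposition~\ref{prop:diff_fpbar}, and I do not anticipate substantial new obstacles.
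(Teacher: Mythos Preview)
Your proof is correct, but it takes a substantially longer route than the paper's own argument. The paper does not re-invoke Theorem~\ref{thm:4.1} for the pair $(\alpha_1,\beta)$ at all. Instead, it fixes a single $\l=\l_k$, uses Lemma~\ref{lem:prep} to conclude that both $\alpha_1$ and $\beta$ are periodic under $f_\l$, and then observes directly from formula~\eqref{eq:121} that
\[
f_\l^n(\alpha_1)-f_\l^n(\beta)=(\alpha_1-\beta)^{p^{n\ell}}\quad\text{for every }n\in\N.
\]
Since both orbits are finite, the left-hand side assumes only finitely many values as $n$ varies; hence $(\alpha_1-\beta)^{p^{n_1\ell}}=(\alpha_1-\beta)^{p^{n_2\ell}}$ for some $n_1<n_2$, which immediately gives $\alpha_1-\beta\in\Fpbar$. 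No local canonical height computation and no second appeal to Theorem~\ref{thm:4.1} are needed.

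Your approach, by contrast, first upgrades equation~\eqref{eq:44} to the pair $(\alpha_1,\beta)$ via Theorem~\ref{thm:4.1} (your use of Lemma~\ref{lem:prep} to get $\hhat_{\l_k}(\beta)=0$ is exactly right), and then replays the entire height-comparison argument of Proposition~\ref{prop:diff_fpbar}. This is valid but heavier: you are using the analytic machinery to derive a conclusion that falls out of a one-line pigeonhole argument once periodicity is known. The paper's proof exploits the additive structure of the iteration formula~\eqref{eq:121} in a purely combinatorial way, which is why Proposition~\ref{prop:diff_fpbar_2} is so much shorter than Proposition~\ref{prop:diff_fpbar}. What your approach buys is uniformity of method---it shows that the $(\alpha_1,\beta)$ case can be handled exactly like the $(\alpha_1,\alpha_2)$ case---but at the cost of redundancy, since the periodicity established in Lemma~\ref{lem:prep} already contains everything you need.
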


\begin{proof}
Fix $k\in\mathbb{N}$, and let $\l\colonequals\l_k$. According to Lemma~\ref{lem:prep}, $\alpha_1$ and $\beta$ are periodic under the action of $f_\l$. Using formula~\eqref{eq:121}, we compute:
\begin{equation}
\label{eq:78}
f_\l^n(\alpha_1)-f_\l^n(\beta)=\left(\alpha_1-\beta\right)^{p^{n\ell}}\text{ for each }n\in\N.
\end{equation}
Since both sequences $\{f_\l^n(\alpha_1)\}_{n\geq 1}$ and $\{f_\l^n(\beta)\}_{n\geq 1}$ have finitely many elements, the left-hand side of~\eqref{eq:78} takes only finitely many values as $n\in\N$ varies. Thus, the right-hand side of~\eqref{eq:78} also takes finitely many values,  that is, $\left(\alpha_1-\beta\right)^{p^{n_1\ell}}=\left(\alpha_1-\beta\right)^{p^{n_2\ell}}$ for some positive integers $n_1<n_2$. Consequently, we obtain $\alpha_1-\beta\in\Fpbar$.
\end{proof}

To summarize, Propositions~\ref{prop:diff_fpbar}~and~\ref{prop:diff_fpbar_2} yield that when $d=p^\ell$,  the hypotheses in Theorem~\ref{thm:main} imply that at least one of the alternatives~(i)-(iii) from its conclusion must hold. Therefore, for the remainder of the proof, we assume $d$ is not a power of $p$.


\subsection{Conclusion for our proof of Theorem~\ref{thm:main}}

We have now reduced the proof to the case where $\alpha_1\notin\Fpbar$ and $d$ is not a power of the characteristic $p$. The following result, which is \cite[Proposition~5.1]{1}, provides the final step.
\begin{theorem}
\label{thm:5.1}
With the above notation for $K,\alpha_1,\alpha_2$, assume equality~\eqref{eq:44} holds for each absolute value $|\cdot|_v$ and for each $\l\in\Kbar$. If $d$ is not a power of $p=\operatorname{char}(K)$, and at least one of $\alpha_1, \alpha_2$ is not in $\Fpbar$, then $\alpha_1^d=\alpha_2^d$.
\end{theorem}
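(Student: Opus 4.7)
I would argue by contradiction: suppose $\alpha_1^d \ne \alpha_2^d$ and exhibit some $\lambda \in \Kbar$ and $v \in \Omega_K$ with $\hhat_{\lambda,v}(\alpha_1) \ne \hhat_{\lambda,v}(\alpha_2)$, violating \eqref{eq:44}. The key idea is to specialize $\lambda$ so that a few iterates of $f_\lambda$ drive the orbit of $\alpha_1$ onto the critical orbit (through the critical point $0$), at which point Lemma~\ref{lem:easy} computes $\hhat_{\lambda,v}(\alpha_1)$ exactly, and then to compare it to $\hhat_{\lambda,v}(\alpha_2)$ directly.

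Because at least one of $\alpha_1,\alpha_2$ lies outside $\Fpbar$, one can pick a place $v \in \Omega_K$ with, say, $|\alpha_1|_v > 1$. Setting $\lambda_0 := -\alpha_1^d$ gives $f_{\lambda_0}(\alpha_1) = 0$ and $|\lambda_0|_v = |\alpha_1|_v^d > 1$; iterating Lemma~\ref{lem:easy}~\ref{lem:easy:item:iii} along the orbit $0 \mapsto \lambda_0 \mapsto \lambda_0^d + \lambda_0 \mapsto \cdots$ produces $|f_{\lambda_0}^n(0)|_v = |\lambda_0|_v^{d^{n-1}}$ for $n\ge 1$, so $\hhat_{\lambda_0,v}(0) = \log|\alpha_1|_v$ and therefore
\begin{equation*}
\hhat_{\lambda_0,v}(\alpha_1) \;=\; \frac{\hhat_{\lambda_0,v}(0)}{d} \;=\; \frac{\log|\alpha_1|_v}{d}.
\end{equation*}
Next, if $|\alpha_2|_v < |\alpha_1|_v$ then Lemma~\ref{lem:easy}~\ref{lem:easy:item:iii} applied to $\alpha_2$ gives $\hhat_{\lambda_0,v}(\alpha_2) = \log|\alpha_1|_v$, contradicting \eqref{eq:44} since $|\alpha_1|_v > 1$; if instead $|\alpha_2|_v > |\alpha_1|_v$, Lemma~\ref{lem:easy}~\ref{lem:easy:item:ii} gives $\hhat_{\lambda_0,v}(\alpha_2) = \log|\alpha_2|_v$, and matching $\log|\alpha_1|_v/d$ would require $|\alpha_2|_v^d = |\alpha_1|_v$, incompatible with $|\alpha_2|_v > 1$. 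So \eqref{eq:44} forces $|\alpha_1|_v = |\alpha_2|_v$ at every such $v$.

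In the equi-valuation regime $|\alpha_1|_v = |\alpha_2|_v > 1$, I push the same analysis one more step: letting $\xi := f_{\lambda_0}(\alpha_2) = \alpha_2^d - \alpha_1^d$, the cases $|\xi|_v > |\alpha_1|_v$ and $|\xi|_v = |\alpha_1|_v$ are each ruled out by further mismatches coming from Lemma~\ref{lem:easy}, leaving the rigid valuative constraint $|\alpha_2^d - \alpha_1^d|_v < |\alpha_1|_v$ at every $v$ with $|\alpha_1|_v > 1$. To promote this inequality to the global equality $\alpha_1^d = \alpha_2^d$, I would iterate the construction by choosing for each $k \ge 1$ a parameter $\lambda_k \in \Kbar$ with $f_{\lambda_k}^k(\alpha_1) = 0$, repeating the orbit-collapse computation to obtain an explicit closed form for $\hhat_{\lambda_k,v}(\alpha_1)$, and assembling the resulting $k$-parameter family of constraints on the iterates of $\alpha_2$ to force $\alpha_1^d = \alpha_2^d$.

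The main obstacle is this last step: converting local valuative inequalities, valid at individually chosen parameters and places, into a single global algebraic conclusion. It is here that the hypothesis $p \nmid d$ is essential: in characteristic $p$ the binomial expansion $\alpha_2^d - \alpha_1^d = d\alpha_1^{d-1}(\alpha_2 - \alpha_1) + \binom{d}{2}\alpha_1^{d-2}(\alpha_2-\alpha_1)^2 + \cdots$ has a nontrivial leading linear term precisely when $p \nmid d$, providing the arithmetic leverage needed to separate $\alpha_2 - \alpha_1$ from $(\alpha_2 - \alpha_1)^d$. When $d = p^\ell$ the identity $\alpha_2^d - \alpha_1^d = (\alpha_2 - \alpha_1)^d$ collapses every such constraint into a single weaker one and the method breaks down, which is exactly why Theorem~\ref{thm:main_0} needs the separate alternative (B) for $d = p^\ell$.
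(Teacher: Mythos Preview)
The paper does not actually prove Theorem~\ref{thm:5.1}; it is quoted verbatim as \cite[Proposition~5.1]{1} and used as a black box. So there is no in-paper proof to compare against, and the honest answer is that you should consult \cite{1} for the argument.

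That said, your sketch is headed in the right direction: specializing $\lambda$ so that $f_\lambda^k(\alpha_1)=0$ and reading off $\hhat_{\lambda,v}(\alpha_1)$ exactly via Lemma~\ref{lem:easy} is precisely the mechanism used in \cite{1}, and your first-step conclusion $|\alpha_1|_v=|\alpha_2|_v$ at every place with $|\alpha_1|_v>1$ is correct. However, two points are genuinely incomplete. First, the assertion that the boundary case $|\xi|_v=|\alpha_1|_v$ (with $\xi=\alpha_2^d-\alpha_1^d$) is ``ruled out by further mismatches'' is not justified: in that case $|\xi|_v^d=|\lambda_0|_v$ and Lemma~\ref{lem:easy} is silent, so one must iterate and there is no a priori reason the iteration terminates in your favor. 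Second, and more importantly, the constraint $|\alpha_2^d-\alpha_1^d|_v<|\alpha_1|_v$ at the bad places is far from $\alpha_1^d=\alpha_2^d$: already $\alpha_2-\alpha_1\in\Fpbar^\ast$ satisfies it regardless of whether $p\mid d$, so the binomial heuristic you mention does not, by itself, separate the two cases. In \cite{1} this gap is closed by letting the parameter $\lambda$ range over solutions of $f_\lambda^k(\alpha_1)=0$ for \emph{all} $k$ and extracting, from the exact shape of the polynomials $P_{k,\alpha_j}(\lambda)$ (cf.\ Lemma~\ref{lem:exact_form}, where $p\nmid d$ enters through the nonvanishing of the subleading coefficient $s^{(p^{r(m-1)}-1)/(p^r-1)}$), an infinite family of algebraic identities that together force $\alpha_1^d=\alpha_2^d$. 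Your proposal correctly identifies this as ``the main obstacle'' but does not carry it out.
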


This completes the proof of Theorem~\ref{thm:main}.


\section{Proof of Theorem~\ref{thm:converse_i}}
\label{sec:i}

In this Section, we prove Theorem~\ref{thm:converse_i}. We work under its hypotheses: $d\ge 2$ is an integer, $L$ is a field of characteristic $p$, and $\alpha,\beta\in L$. As before, we consider the family of polynomials  $f_\l(z)=z^d+\l$  parameterized by $\l\in\Lbar$. We will prove that the set 
\begin{equation}
\label{eq:100}
C(\alpha;\beta)=\left\{\l\in\Lbar\colon\text{ there exists $m\in\N$ such that }f_\l^m(\alpha)=\beta\right\}
\end{equation}
is infinite.


\subsection{Proof of Theorem~\ref{thm:converse_i} when the degree is a power of the characteristic of our field}
\label{subsec:d_p_ell}
 
We first prove Theorem~\ref{thm:converse_i} under the additional assumption that $d=p^\ell$ for some $\ell\in\N$.

\begin{lemma}
\label{lem:infinitely_solutions_power_p}
With the above assumptions, $C(\alpha;\beta)$ is an infinite set.
\end{lemma}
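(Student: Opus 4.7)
The plan is to exploit the explicit formula for the iterates given in equation~\eqref{eq:121}: when $d=p^{\ell}$, we have $f_{\lambda}^{m}(\alpha) = \alpha^{p^{m\ell}} + g_{m}(\lambda)$, where $g_{m}(\lambda) \colonequals \sum_{i=0}^{m-1} \lambda^{p^{i\ell}}$ is an additive ($\Fp$-linear) polynomial in $\lambda$. Thus the condition $f_{\lambda}^{m}(\alpha) = \beta$ rewrites as the single equation $g_{m}(\lambda) = \beta - \alpha^{p^{m\ell}}$, and the fiber $S_{m} \colonequals \{\lambda \in \Lbar : f_{\lambda}^{m}(\alpha) = \beta\}$ coincides with the set of roots in $\Lbar$ of the polynomial $g_{m}(\lambda) - (\beta - \alpha^{p^{m\ell}})$, which has degree $p^{\ell(m-1)}$.

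The key observation is that this polynomial is separable: its formal derivative equals $g_{m}'(\lambda) = 1$, because every term $\lambda^{p^{i\ell}}$ with $i \geq 1$ has derivative zero in characteristic $p$, while the $i=0$ term contributes $1$. Consequently $S_{m}$ consists of exactly $p^{\ell(m-1)}$ distinct elements of $\Lbar$, and in particular $|S_{m}| \to \infty$ as $m \to \infty$. Since $C(\alpha;\beta) = \bigcup_{m \in \N} S_{m} \supseteq S_{m}$ for every $m \in \N$, we conclude that $|C(\alpha;\beta)| \geq p^{\ell(m-1)}$ for arbitrarily large $m$, so $C(\alpha;\beta)$ is infinite. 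There is essentially no obstacle here: the assumption that $d$ is a power of $p$ turns the map $\lambda \mapsto f_{\lambda}^{m}(\alpha)$ into an additive polynomial in $\lambda$ (up to an $\alpha$-dependent translation), and even a single sufficiently large $m$ already contributes arbitrarily many solutions. This shortcut will certainly fail when $d$ is not a power of $p$, where individual equations $f_{\lambda}^{m}(\alpha) = \beta$ cut out finite sets of bounded complexity and a genuinely different tool (such as the Moosa--Scanlon theorem alluded to in Section~\ref{sec:strategy}) will be required.
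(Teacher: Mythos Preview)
Your proof is correct and follows essentially the same approach as the paper: both use the explicit formula~\eqref{eq:121} to rewrite $f_\l^m(\alpha)=\beta$ as a polynomial equation in $\l$, observe that this polynomial is separable (since its derivative is $1$), and conclude that $C(\alpha;\beta)$ is infinite because the degrees $p^{\ell(m-1)}$ are unbounded. Your version is slightly more explicit in justifying separability and in noting that a single large $m$ already suffices, whereas the paper phrases the conclusion via the union over all $m$, but the underlying argument is the same.
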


\begin{proof} 
Since $d=p^\ell$, the equation $f_\l^m(\alpha)=\beta$ has the following explicit form due to~\eqref{eq:121}:
\begin{equation}
\label{eq:101}
\alpha^{p^{m\ell}}+\sum_{i=0}^{m-1}\l^{p^{i\ell}}=\beta.
\end{equation}
As the equation~\eqref{eq:101} is separable (in $\l$), it has distinct roots for each $m$. Therefore, the set $C(\alpha;\beta)$, being the union of these roots over all $m\in\mathbb{N}$, is infinite. 
\end{proof}


\subsection{Conclusion of our proof for  Theorem~\ref{thm:converse_i}}
\label{subsec:d_p_not_ell} In light of Lemma~\ref{lem:infinitely_solutions_power_p}, it suffices to prove Theorem~\ref{thm:converse_i} under the assumption that $d$ is \emph{not} a power of $p$.

We argue by contradiction and assume $C(\alpha;\beta)$ is finite. Let $C(\alpha;\beta)=\{\l_1,\l_2,\dots, \l_r\}$ for some $r\in\N$. For each $m\in\N$, the expansion of $f_\l^m(\alpha)$ from Lemma~\ref{lem:poly} yields the equation: 
\begin{equation}
\label{eq:102}
P_{m,\alpha}(\l)=\l^{d^{m-1}}+\sum_{i=1}^{d^{m-1}-1}c_{m,i}(\alpha)\cdot \l^{d^{m-1}-i} + \alpha^{d^m}=\beta.
\end{equation}
Since $C(\alpha;\beta)=\{\l_1,\l_2,\dots, \l_r\}$, for each $m\in\N$, there exist some nonnegative integers $e_{m,1},\dots, e_{m,r}$ such that the polynomial $P_{m,\alpha}(u)-\beta\in L[u]$ factors as follows in $\Lbar[u]$: 
\begin{equation}
\label{eq:103}
P_{m,\alpha}(u)-\beta=\prod_{i=1}^r (u-\l_i)^{e_{m,i}}.
\end{equation}
Note that the left-hand side (as a polynomial in $L[u]$) is a monic polynomial by~\eqref{eq:102}, which justifies the right-hand side of~\eqref{eq:103} because all the roots of $P_{m,\alpha}(u)-\beta$ are among $\l_1,\dots,\l_r$. On the other hand, we have the recurrence formula:
$$P_{m+1,\alpha}(u)=P_{m,\alpha}(u)^d+u,$$
which combined with equation~\eqref{eq:103} yields
\begin{equation}
\label{eq:104}
\left(\prod_{i=1}^r (u-\l_i)^{e_{m,i}}+\beta \right)^d +u -\beta = \prod_{i=1}^r (u-\l_i)^{e_{m+1,i}}.
\end{equation}
To interpret the equation~\eqref{eq:104}, we consider the subgroup $\Gamma$ of $\mathbb{G}_m^2(L(u))$ spanned by all the elements $(u-\l_i,1)$ and $(1,u-\l_i)$ for $i=1,\dots, r$. We also consider the curve $V$ inside $\mathbb{G}_m^2$ defined by the equation
\begin{equation}
\label{eq:105}
(x+\beta)^d = y + (\beta - u).
\end{equation} 
Since the equation~\eqref{eq:105} for $V$ is linear in $y$, the curve $V$ is geometrically irreducible. Also,  equation~\eqref{eq:104} shows that $V(L(u))\cap\Gamma$ is infinite, as it contains all points of the form 
\begin{equation}
\label{eq:111}
\left(P_{m,\alpha}(u)-\beta,P_{m+1,\alpha}(u)-\beta\right)=\left(\prod_{i=1}^r (u-\l_i)^{e_{m,i}},\prod_{i=1}^r (u-\l_i)^{e_{m+1,i}}\right).
\end{equation}
Note that $\beta^d\ne \beta-u$ because $\beta\in L$ and $u$ is a transcendental variable over $\Lbar$; thus, equation~\eqref{eq:105} shows that the curve $V$ is not the translate of an algebraic subgroup of $\mathbb{G}_m^2$. Indeed, the equation of any translate of a proper subtorus of $\mathbb{G}_m^2$ (defined over $\overline{L(u)}$) is of the form $x^ay^b=\zeta$ for some $\zeta\in\overline{L(u)}^{\ast}$ and some integers $a$ and $b$, not both equal to $0$.

We now employ \cite[Theorem B]{F}, which states that the intersection $V\cap\Gamma$ is a finite union of sets of the form 
\begin{equation}
\label{eq:107}
A(\eta_0,\eta_1,\dots,\eta_s;k_1,\dots, k_s)\cdot H,
\end{equation}
where $H\subseteq \Gamma$ is a subgroup, $s\in\N$, while for some given $\eta_0,\eta_1,\dots, \eta_s\in \mathbb{G}_m^2(\overline{L(u)})$ and some $k_1,\dots, k_s\in\N$, we have that 
\begin{equation}
\label{eq:F-set}
A(\eta_0,\eta_1,\dots, \eta_s,k_1,\dots, k_s)\colonequals\left\{\eta_0\cdot \prod_{i=1}^s \eta_i^{p^{k_in_i}}\colon n_i\in \N\right\}.
\end{equation}
In formula~\eqref{eq:107}, we use the notation $C_1\cdot C_2$ for any two subsets $C_1,C_2\subset \mathbb{G}_m^2$ to denote the set $\left\{c_1\cdot c_2\colon c_i\in C_i\text{ for }i=1,2\right\}$. 

Furthermore, as explained in \cite[Remark~2.11]{F}, there exists some $N\in\N$ such that 
\begin{equation}
\label{eq:F-set-2}
\eta_i^N\in\Gamma\text{ for }i=0,1,\dots, s.
\end{equation}
Since $V$ is an irreducible curve which is not a translate of an algebraic subgroup of $\mathbb{G}_m^2$, the subgroups $H$ from equation~\eqref{eq:107} must be finite (see also \cite[Corollary~2.3]{TAMS}). Therefore, at the expense of replacing each set~\eqref{eq:107} by finitely many other sets of the form~\eqref{eq:107}, we may assume from now on that $H$ is the trivial subgroup of $\Gamma$.

As $V$ is a curve, we have $s=1$ in equations~\eqref{eq:107}~and~\eqref{eq:F-set} by \cite[Corollary~2.3]{TAMS}. Thus, the intersection $V\cap\Gamma$ is a union of finitely many sets of the form
$$A(\eta_0,\eta_1;k)=\left\{\eta_0\cdot \eta_1^{p^{kn}}\colon n\in\N \right\},$$
for some given $\eta_0,\eta_1$ satisfying \eqref{eq:F-set-2} and some  $k\in\N$. Next, we write each $\eta_j\colonequals(\gamma_{j,1},\gamma_{j,2})$ for $j=0,1$. Using \eqref{eq:F-set-2}, we can express 
\begin{equation}
\label{eq:108}
\gamma_{j,1}^N\equalscolon\prod_{i=1}^r (u-\l_i)^{a_{j,i}}, 
\end{equation}
for some integers $a_{j,i}$ for $j=0,1$ and $i=1,\dots, r$. Therefore, the $N$-th power of the first components (in $\mathbb{G}_m^2$) of the points in $A(\eta_0,\eta_1;k)$ are of the form
\begin{equation}
\label{eq:109}
\prod_{i=1}^r \left(u-\l_i\right)^{a_{0,i}+ a_{1,i}p^{kn}},
\end{equation}
as we vary $n$ in $\N$. In particular, the degrees in $u$ of the polynomials appearing in equation~\eqref{eq:109} form the set 
\begin{equation}
\label{eq:110}
\left\{A_0+A_1p^{kn}\colon n\in\N\right\},
\end{equation}
where $A_0\colonequals\sum_{i=1}^r a_{0,i}$ and $A_1\colonequals\sum_{i=1}^r a_{1,i}$. On the other hand, we already know that $V\cap\Gamma$ contains the elements from equation~\eqref{eq:111}; in particular, each $P_{m,\alpha}(u)-\beta$ (as we vary $m\in\N$) appears as the first component of an element from the intersection $V\cap\Gamma$. We have (see \eqref{eq:102}) that the degree (in $u$) of $P_{m,\alpha}(u)-\beta$ is $d^{m-1}$. Therefore, the set 
$$\left\{d^{m-1}\colon m\in\N\right\}$$
is contained in the union of finitely many sets of the form \eqref{eq:110}. In particular, for some choice of $A_0$ and $A_1$ (and $k\in\N$), there exist infinitely many $m\in\N$ such that the equation
\begin{equation}
\label{eq:112}
A_0+A_1p^{kn}=d^{m-1},
\end{equation}
has some solution $n\in\mathbb{N}$. However, because $d$ is \emph{not} a power of $p$, equation~\eqref{eq:112} can have only finitely many solutions. This finiteness is a consequence of deep results in Diophantine analysis; for instance, it is a very special case of Laurent's famous theorem on the Mordell-Lang conjecture for algebraic tori \emph{in characteristic $0$} \cite[Th\'eor\`eme~2]{Laurent}. Alternatively, the same conclusion follows from the theory of the $S$-unit equation \cite[Theorem~1.1]{Schlickewei}. This final contradiction shows that the assumption that the equations~\eqref{eq:102} (as we vary $m\in\mathbb{N}$) have only finitely many roots $\l\in\Lbar$ is untenable. Therefore, the set $C(\alpha;\beta)$ must be infinite. 

This concludes our proof of  Theorem~\ref{thm:converse_i}.


\section{Proof of Theorem~\ref{thm:converse_ii}}
\label{sec:ii}

In this Section, we prove Theorem~\ref{thm:converse_ii}. We work under its stated hypotheses: $L$ is a field of characteristic $p$ with points $\alpha_1,\alpha_2,\beta\in L$ satisfying $\alpha_1\ne\alpha_2$, and $d=p^\ell$ for some $\ell\in\N$. As before, we let $f_\l(z)=z^d+\l$ for each $\l\in\Lbar$ and consider the set:
$$C(\alpha_1,\alpha_2;\beta)=\left\{\l\in\Lbar\colon\text{ there exist $m,n\in\N$ such that $f_\l^m(\alpha_1)=f_\l^n(\alpha_2)=\beta$}\right\}.$$
We let $\delta_1\colonequals\alpha_2-\alpha_1$ and $\delta_2\colonequals\beta-\alpha_1$. Furthermore, we assume 
\begin{equation}
\label{eq:79}
\delta_1\in\Fq^\ast\text{ and }\delta_2\in\Fq 
\end{equation}
for some finite subfield $\Fq\subseteq L$. Our goal is to prove that $C(\alpha_1,\alpha_2;\beta)$ is infinite if  the system of two equations:
\begin{equation}
\label{eq:system_1}
\left\{\begin{array}{ccc}
\delta_1 & = & \sum_{i=0}^{s_1-1}\gamma^{p^{ik\ell}}\\
\\
\delta_2 & = & \sum_{i=0}^{s_2-1}\gamma^{p^{ik\ell}}\end{array}\right.
\end{equation}
has a solution  $(\gamma,k,s_1,s_2)\in\Fq^\ast\times\N\times\N\times\N$. Moreover, we will also show that if the system~\eqref{eq:system_1} has no such solution, then $C(\alpha_1,\alpha_2;\beta)$ is \emph{empty}. 
We split our proof of Theorem~\ref{thm:converse_ii} over several Subsections of Section~\ref{sec:ii}. 


\subsection{Strategy for proving Theorem~\ref{thm:converse_ii}}

We obtain the desired conclusion from Theorem~\ref{thm:converse_ii} by first finding explicit conditions which are equivalent with the existence of \emph{at least one} $\l\in C(\alpha_1,\alpha_2;\beta)$ (see Proposition~\ref{prop:findings}); more precisely, the existence of some $\l\in C(\alpha_1,\alpha_2;\beta)$ is equivalent to a solution to the system~\eqref{eq:system_1}. Then we prove that one solution to the system~\eqref{eq:system_1} leads to infinitely many solutions to the system~\eqref{eq:system_1} and in turn, this leads to  infinitely many $\l\in C(\alpha_1,\alpha_2;\beta)$ (see Proposition~\ref{prop:one_to_infinite}).  

We will also prove that equation~\eqref{eq:79} alone \emph{does not always} imply that the set $C(\alpha_1,\alpha_2;\beta)$ is infinite (see Proposition~\ref{prop:not_always}); in other words, there are examples of $\delta_1,\delta_2$ as in~\eqref{eq:79} such that the system~\eqref{eq:system_1} has \emph{no solutions} and therefore, $C(\alpha_1,\alpha_2;\beta)$ is \emph{empty}.


\subsection{Reductions in our proof of Theorem~\ref{thm:converse_ii}}

Since $\alpha_2-\alpha_1,\beta-\alpha_1\in\Fpbar$, then $\alpha_1,\alpha_2,\beta\in \Fpbar(\alpha_1)$; so, without loss of generality (see also Lemma~\ref{lem:trdeg_lambda}), we may assume that $L=\Fpbar(\alpha_1)$.

We first obtain more precise information regarding $\delta_1=\alpha_2-\alpha_1$ and $\delta_2=\beta-\alpha_1$ under the assumption that $C(\alpha_1,\alpha_2;\beta)$ is \emph{nonempty}. To that end, let $\l\in C(\alpha_1,\alpha_2;\beta)$ and let $m,n\in\N$ such that 
\begin{equation}
\label{eq:81}
f_\l^m(\alpha_1)=f_\l^n(\alpha_2)=\beta.
\end{equation}
We will see next that we can, in fact, \emph{always} assume $m>n$ in \eqref{eq:81}.

\begin{lemma}
\label{lem:actually}
With the notation as in \eqref{eq:81}, we can assume that $m>n$.
\end{lemma}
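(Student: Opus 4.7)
The plan is to enlarge the exponent $m$ by a full period of $\alpha_1$ under $f_\l$, which preserves the identity $f_\l^m(\alpha_1)=\beta$ while letting $m$ grow past $n$.

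First I would dispose of the case $m=n$: then $f_\l^m(\alpha_1)=\beta=f_\l^m(\alpha_2)$, and since $d=p^\ell$ makes $f_\l$, and hence each iterate $f_\l^m$, a bijection on $\Lbar$, this would force $\alpha_1=\alpha_2$, contradicting the standing assumption. If $m>n$ there is nothing to prove, so from now on I may assume $m<n$.

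To boost $m$ past $n$, I would appeal to Lemma~\ref{lem:prep_0}. Rewrite the equation $f_\l^m(\alpha_1)=\beta$ as $f_\l^m(\alpha_1)=\alpha_1+\delta_2$; since $\delta_2\in\Fq\subseteq\Fpbar$, Lemma~\ref{lem:prep_0} applies and shows that $\alpha_1$ is periodic under $f_\l$ (the degenerate case $\delta_2=0$ is immediate, since then $\beta=\alpha_1$ is fixed by $f_\l^m$). Let $T\in\N$ be any period of $\alpha_1$. Then for every $k\in\N$,
\[
f_\l^{m+kT}(\alpha_1)=f_\l^m\bigl(f_\l^{kT}(\alpha_1)\bigr)=f_\l^m(\alpha_1)=\beta,
\]
so choosing $k$ large enough that $m'\colonequals m+kT>n$ yields a new pair $(m',n)$ with $f_\l^{m'}(\alpha_1)=f_\l^n(\alpha_2)=\beta$ and $m'>n$, as required. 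No substantive obstacle arises; the essential ingredient is the periodicity already established in Lemma~\ref{lem:prep_0}, together with the bijectivity of $f_\l$ on $\Lbar$ when $d=p^\ell$.
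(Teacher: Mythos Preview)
Your proof is correct and follows essentially the same approach as the paper: rule out $m=n$ by bijectivity, establish periodicity of $\alpha_1$ via Lemma~\ref{lem:prep_0}, and then increase $m$ by a multiple of the period. The only minor difference is that you invoke Lemma~\ref{lem:prep_0} directly from $f_\l^m(\alpha_1)=\alpha_1+\delta_2$ with $\delta_2\in\Fq$, whereas the paper first rewrites $f_\l^{n-m}(\alpha_2)=\alpha_1$ and uses $\delta_1\in\Fq$; your route is marginally more direct but otherwise equivalent.
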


\begin{proof}
First, we note that by Lemma~\ref{lem:easy_perm}, the case $m=n$ implies $\alpha_1=\alpha_2$, which contradicts the hypothesis in Theorem~\ref{thm:converse_ii}. Thus, it suffices to show that if $n>m$ in \eqref{eq:81}, then we can replace $m$ by a suitable integer larger than $n$ so that \eqref{eq:81} holds.

Assume $n>m$ in \eqref{eq:81}. Since $f_\l$ induces a permutation on $\Lbar$, equation~\eqref{eq:81} implies:
\begin{equation}
\label{eq:810}
f_\l^{n-m}(\alpha_2)=\alpha_1.
\end{equation}
Since $\alpha_1-\alpha_2=-\delta_1\in\Fpbar$, we can apply Lemma~\ref{lem:prep_0} to equation~\eqref{eq:810} to deduce that both $\alpha_1$ and $\alpha_2$ are periodic points for $f_\l$. Let $t_0\in\N$ be the period of $\alpha_1$ under the action of $f_\l$. Define $m'\colonequals m+nt_0$. By periodicity of $\alpha_1$, we have $f_\l^{m'}(\alpha_1)=f_\l^m(\alpha_1)=\beta$. This gives us another instance of \eqref{eq:81}, namely, $f_\l^{m'}(\alpha_1)=f_\l^n(\alpha_2)=\beta$. Since $m'>n$, this concludes our proof of Lemma~\ref{lem:actually}.
\end{proof}


\subsection{Different points in the orbit which differ by an element from $\Fpbar$}

According to Lemma~\ref{lem:actually}, we may assume that $m>n$ in equation~\eqref{eq:81}.

Because $d=p^\ell$, we recall from \eqref{eq:121} that for every $n\in\N$, 
\begin{equation}
\label{eq:84}
f_\l^n(z)=z^{p^{n\ell}}+\sum_{i=0}^{n-1}\l^{p^{i\ell}}.
\end{equation}
In particular, we have (for each $z$ and $\epsilon$)
\begin{equation}
\label{eq:85}
f_\l^n(z+\epsilon)=z^{p^{n\ell}}+\epsilon^{p^{n\ell}}+ \sum_{i=0}^{n-1}\l^{p^{i\ell}}=f_\l^n(z)+\epsilon^{p^{n\ell}}.
\end{equation}
Since $f_\l(z)=z^d+\l=z^{p^\ell}+\l$ induces a permutation on $\Lbar$ and $m>n$, equation~\eqref{eq:81} yields that $f_\l^{m-n}(\alpha_1)=\alpha_2$. We recall that 
\begin{equation}
\label{eq:82}
\text{$\delta_1\colonequals\alpha_2-\alpha_1\in\Fq^\ast$ and $\delta_2\colonequals\beta-\alpha_1\in\Fq$; also, we let}
\end{equation}
\begin{equation}
\label{eq:83}
\text{$k\in\N$ be minimal with the property that $f_\l^k(\alpha_1)-\alpha_1\in\Fq$.}
\end{equation}
Note that $f_\l^m(\alpha_1)=\beta=\alpha_1+\delta_2$ and $f_\l^{m-n}(\alpha_1)=\alpha_2=\alpha_1+\delta_1$; so, equation~\eqref{eq:82} ensures that $k$ from~\eqref{eq:83} is well-defined.  We let $\gamma\colonequals f_\l^k(\alpha_1)-\alpha_1$; due to~\eqref{eq:83}, we have
\begin{equation}
\label{eq:830}
\gamma\in\Fq.
\end{equation}
For each $a\ge 0$, we write $u_a \colonequals f_\l^{ka}(\alpha_1)-\alpha_1$; clearly, $u_0=0$ and $u_1=\gamma$. A simple induction on $a$ using equation~\eqref{eq:85} establishes the recurrence relation
\begin{equation}
\label{eq:86}
u_{a+1}=u_a^{p^{k\ell}}+\gamma.
\end{equation}
To show the inductive step, we compute $u_{a+1}=f_\l^{k(a+1)}(\alpha_1)-\alpha_1$ and so,
$$u_{a+1}=f_\l^k\left(f_\l^{ka}(\alpha_1)\right)-\alpha_1= f_\l^k\left(\alpha_1+u_a\right)-\alpha_1\overset{\eqref{eq:85}}{=} f_\l^k(\alpha_1)+u_a^{p^{k\ell}} -\alpha_1.$$
Since $f_\l^k(\alpha_1)-\alpha_1=\gamma$, we obtain the recurrence relation~\eqref{eq:86}. 
In particular, equation~\eqref{eq:86} shows that 
\begin{equation}
\label{eq:87}
f_\l^{ka}(\alpha_1)-\alpha_1=\sum_{i=0}^{a-1}\gamma^{p^{ik\ell}}\in\Fq.
\end{equation}

\begin{lemma}
\label{lem:multiples_l}
Let $s\in\N$ such that $f_\l^s(\alpha_1)-\alpha_1\in\Fq$. Then $k\mid s$.
\end{lemma}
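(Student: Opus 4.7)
The plan is to run the standard minimality/Euclidean division argument against the defining property of $k$. Write $s=qk+r$ with $q\ge 0$ and $0\le r<k$, and my goal is to force $r=0$.

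To do this, I will decompose $f_\l^s=f_\l^r\circ f_\l^{qk}$ and evaluate at $\alpha_1$. By equation~\eqref{eq:87}, the quantity $u_q\colonequals f_\l^{qk}(\alpha_1)-\alpha_1$ lies in $\Fq$. Substituting this into $f_\l^r$ and applying the translation identity~\eqref{eq:85}, I get
\[
f_\l^s(\alpha_1)-\alpha_1 \;=\; f_\l^r(\alpha_1+u_q)-\alpha_1 \;=\; \bigl(f_\l^r(\alpha_1)-\alpha_1\bigr) \;+\; u_q^{p^{r\ell}}.
\]
Since $\Fq$ is a finite field (hence closed under $p$-th powers) and $u_q\in\Fq$, the term $u_q^{p^{r\ell}}$ lies in $\Fq$. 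By hypothesis, $f_\l^s(\alpha_1)-\alpha_1\in\Fq$ as well, so the difference $f_\l^r(\alpha_1)-\alpha_1$ also lies in $\Fq$.

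If $r>0$, this contradicts the minimality of $k$ specified in~\eqref{eq:83}. Therefore $r=0$ and $k\mid s$. There is no real obstacle here: everything reduces to the additivity relation~\eqref{eq:85} satisfied by $f_\l^n$ (which is available precisely because $d=p^\ell$) combined with the fact that $\Fq$ is stable under the Frobenius. Essentially, the set $\{s\in\N\colon f_\l^s(\alpha_1)-\alpha_1\in\Fq\}$ is closed under addition (a variant of the same computation shows closure), so it is an additive submonoid of $\N$, and $k$ is its smallest positive element.
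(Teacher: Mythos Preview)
Your proof is correct and follows essentially the same route as the paper's: Euclidean division $s=ak+r$, then use~\eqref{eq:87} to place $f_\l^{ak}(\alpha_1)-\alpha_1$ in $\Fq$, apply the translation identity~\eqref{eq:85} to isolate $f_\l^r(\alpha_1)-\alpha_1$, and invoke the minimality of $k$ from~\eqref{eq:83} to force $r=0$. One cosmetic point: you use $q$ for the Euclidean quotient while $q$ is already reserved for the cardinality of $\Fq$; renaming it (the paper uses $a$) would avoid the clash.
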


\begin{proof}
If $k\nmid s$, then there exists $a\ge 0$ and $r\in\{1,\dots, k-1\}$ such that $s=ka+r$. Then equation~\eqref{eq:87} yields that $u_a=f_\l^{ka}(\alpha_1)-\alpha_1\in\Fq$. Using equation~\eqref{eq:85}, we compute:
\begin{equation}
\label{eq:88}
f^s_\l(\alpha_1)=f^{ka+r}_\l(\alpha_1)=f^r_\l\left(f_\l^{ka}(\alpha_1)\right)= f^r_\l(\alpha_1+u_a)=f_\l^r(\alpha_1)+u_a^{p^{r\ell}}.
\end{equation}
Since $u_a\in\Fq$ and $f^s_\l(\alpha_1)-\alpha_1\in\Fq$ (our hypothesis), we deduce $f_\l^r(\alpha_1)-\alpha_1\in\Fq$ from equation~\eqref{eq:88}. However, since $1\le r\le k-1$, this contradicts the minimality of $k$ from~\eqref{eq:83}. Therefore, we must have that $k\mid s$, as desired.
\end{proof}

Using Lemma~\ref{lem:multiples_l} along with equation~\eqref{eq:82}, we conclude that $m-n=ks_1$ and $m=ks_2$ for some positive integers $s_1<s_2$.


\subsection{The defining system of two equations and one unknown} 
\label{subsec:system}

According to equations~\eqref{eq:82}~and~\eqref{eq:87}, the original condition from~\eqref{eq:81} translates to \emph{two} equations: 
$$\delta_1=f_\l^{m-n}(\alpha_1)-\alpha_1=f_\l^{ks_1}(\alpha_1)-\alpha_1 =u_{ks_1}\text{ and so,}$$
\begin{equation}
\label{eq:89}
\delta_1= \sum_{i=0}^{s_1-1}\gamma^{p^{ik\ell}}\text{; and}
\end{equation}
$$\delta_2=f_\l^{m}(\alpha_1)-\alpha_1=f_\l^{ks_2}(\alpha_1)-\alpha_1 =u_{ks_2}\text{ and so,}$$
\begin{equation}
\label{eq:90}
\delta_2=\sum_{i=0}^{s_2-1}\gamma^{p^{ik\ell}}.
\end{equation}
We summarize our findings so far in the following Proposition.
\begin{proposition}
\label{prop:findings}
With the notation as in Theorem~\ref{thm:converse_ii}, assume $\delta_1=\alpha_2-\alpha_1\ne 0$ and $\delta_2=\beta-\alpha_1$ are contained in $\Fq$. Then the set $C(\alpha_1,\alpha_2;\beta)$ is nonempty if and only if the system~\eqref{eq:system_1}  has a solution $(\gamma,k,s_1,s_2)\in\Fq^\ast\times\N\times\N\times\N$.
\end{proposition}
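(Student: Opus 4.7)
The plan is to prove Proposition~\ref{prop:findings} via a two-directional argument, most of which has already been set up in the preceding subsections.

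For the necessary direction (if $C(\alpha_1,\alpha_2;\beta)\neq\emptyset$, then the system~\eqref{eq:system_1} admits a solution), I would essentially record what the discussion in Subsection~\ref{subsec:system} already established. Starting from any $\lambda\in C(\alpha_1,\alpha_2;\beta)$ with $f_\lambda^m(\alpha_1)=f_\lambda^n(\alpha_2)=\beta$, I invoke Lemma~\ref{lem:actually} to reduce to $m>n$, define $k$ as the minimal positive integer with $f_\lambda^k(\alpha_1)-\alpha_1\in\mathbb{F}_q$, and set $\gamma\colonequals f_\lambda^k(\alpha_1)-\alpha_1\in\mathbb{F}_q^\ast$ (nonzero because otherwise $\alpha_1$ would be periodic of period dividing $k$ which, combined with $f_\lambda^{m-n}(\alpha_1)=\alpha_2$ and the minimality, would force $\delta_1=0$). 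Lemma~\ref{lem:multiples_l} forces $k\mid(m-n)$ and $k\mid m$, so writing $m-n=ks_1$ and $m=ks_2$, the recurrence~\eqref{eq:86} specialized at $a=s_1$ and $a=s_2$ yields exactly equations~\eqref{eq:89}~and~\eqref{eq:90}, which is the system~\eqref{eq:system_1}.

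For the sufficient direction, given a solution $(\gamma,k,s_1,s_2)\in\mathbb{F}_q^\ast\times\mathbb{N}\times\mathbb{N}\times\mathbb{N}$, I would manufacture $\lambda$ by solving the equation $f_\lambda^k(\alpha_1)=\alpha_1+\gamma$. Using the explicit formula~\eqref{eq:84}, this amounts to
\begin{equation*}
\sum_{i=0}^{k-1}\lambda^{p^{i\ell}}\;=\;\alpha_1+\gamma-\alpha_1^{p^{k\ell}},
\end{equation*}
which is a polynomial equation in $\lambda$ of degree $p^{(k-1)\ell}$ over $L$ and therefore has roots in $\Lbar$. For such a $\lambda$, the recurrence~\eqref{eq:86} (or equivalently the closed form~\eqref{eq:87}) gives $f_\lambda^{ks_1}(\alpha_1)=\alpha_1+\delta_1=\alpha_2$ and $f_\lambda^{ks_2}(\alpha_1)=\alpha_1+\delta_2=\beta$.

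It only remains to convert these two identities into simultaneous statements of the form $f_\lambda^m(\alpha_1)=f_\lambda^n(\alpha_2)=\beta$ with $m,n\in\mathbb{N}$, and this is the subtle step that needs a short case analysis. When $s_2>s_1$, I simply take $m=ks_2$ and $n=k(s_2-s_1)$, observing that $f_\lambda^n(\alpha_2)=f_\lambda^{k(s_2-s_1)}(f_\lambda^{ks_1}(\alpha_1))=f_\lambda^{ks_2}(\alpha_1)=\beta$. When $s_2\le s_1$, however, this direct trick fails and I must instead use periodicity: applying Lemma~\ref{lem:prep_0} to the relation $f_\lambda^{ks_1}(\alpha_1)=\alpha_1+\delta_1$ (with $\delta_1\in\mathbb{F}_q\subseteq\Fpbar$) shows that $\alpha_1$ and hence $\alpha_2$ and $\beta$ are all periodic for $f_\lambda$, lying on a common cycle; consequently there exists some $n\in\mathbb{N}$ with $f_\lambda^n(\alpha_2)=\beta$ (choose a large multiple of the period and subtract the finite forward offset from $\beta$ to $\alpha_2$, or equivalently apply positively to close the cycle). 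This periodicity argument, together with $m=ks_2$, places $\lambda$ in $C(\alpha_1,\alpha_2;\beta)$ and completes the converse. The only genuinely delicate point is this case-splitting in the final step, where the apparent asymmetry between $s_1$ and $s_2$ in the system forces the use of Lemma~\ref{lem:prep_0}; the rest of the proof is a transcription of formulas already derived.
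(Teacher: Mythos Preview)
Your proof is correct and follows essentially the same route as the paper's: both directions hinge on the closed form~\eqref{eq:87} for $f_\lambda^{ka}(\alpha_1)-\alpha_1$, with the converse realized by solving $f_\lambda^k(\alpha_1)=\alpha_1+\gamma$ for $\lambda$. In fact you are more careful than the paper on one point: in the converse the paper simply sets $m=ks_2$, $n=k(s_2-s_1)$ without addressing the possibility $s_2\le s_1$, whereas you correctly split into cases and invoke Lemma~\ref{lem:prep_0} to produce a positive $n$ when $s_2\le s_1$ (equivalently, one could note that the sequence $u_a$ is purely periodic in $a$, so $s_2$ may be increased past $s_1$ without loss). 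Your parenthetical justification that $\gamma\ne 0$ is valid, though the reference to ``minimality'' is a slight red herring---the cleanest argument is simply that $\gamma=0$ forces $\delta_1=\sum_{i=0}^{s_1-1}\gamma^{p^{ik\ell}}=0$ directly from~\eqref{eq:89}.
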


\begin{proof}
First, assume $C(\alpha_1, \alpha_2; \beta)$ is nonempty and let $\lambda\in C(\alpha_1, \alpha_2; \beta)$. Then $f_\l^m(\alpha_1)=f_\l^n(\alpha_2)=\beta$ for some $m, n\in\mathbb{N}$. Since $\alpha_1\neq \alpha_2$,  Lemma~\ref{lem:easy_perm} implies $m\ne n$, and by Lemma~\ref{lem:actually}, we may assume $m>n$. This leads to the system~\eqref{eq:system_1} (see also equations~\eqref{eq:89}~and~\eqref{eq:90}) to have a solution $(\gamma,k,s_1,s_2)\in\Fq\times\N\times\N\times\N$. Furthermore, the assumption $\delta_1\ne 0$ ensures that $\gamma\ne 0$ due to equation~\eqref{eq:89}. 

Now, for the converse implication, we assume the system~\eqref{eq:system_1} has a solution $(\gamma,k,s_1,s_2)\in\Fq^\ast\times\N\times\N\times\N$. From equation~\eqref{eq:84} we solve for $\l\in\Lbar$ such that
\begin{equation}
\label{eq:800}
f_\l^k(\alpha_1)=\alpha_1+\gamma.
\end{equation}
Using again \eqref{eq:84} coupled with \eqref{eq:800}, along with equations~\eqref{eq:89}~and~\eqref{eq:90}, we obtain 
\begin{equation}
\label{eq:801}
f_\l^{ks_1}(\alpha_1)=\alpha_1+\delta_1=\alpha_2\text{ and }
\end{equation}  
\begin{equation}
\label{eq:802}
f_\l^{ks_2}(\alpha_1)=\alpha_1+\delta_2=\beta.
\end{equation}
So, choosing $m\colonequals ks_2$ and $n \colonequals k(s_2-s_1)$, we get that equation~\eqref{eq:81} holds; therefore, $\l\in C(\alpha_1,\alpha_2;\beta)$, as desired.

This concludes our proof of Proposition~\ref{prop:findings}.
\end{proof}


\subsection{One solution to our system generates infinitely many parameters}

We continue with the notation as in Subsection~\ref{subsec:system}.  
The next Proposition shows that once there exists \emph{one} solution $\gamma\in\Fq^\ast$ (and $k,s_1,s_2\in\Fq$) to the system of equations~\eqref{eq:89}~and~\eqref{eq:90}, then  $C(\alpha_1,\alpha_2;\beta)$ is infinite.

\begin{proposition}
\label{prop:one_to_infinite}
If there exists a solution $(\gamma,k,s_1,s_2)\in\Fq^\ast\times \N\times\N\times\N$ to the equations~\eqref{eq:89}~and~\eqref{eq:90}, then $C(\alpha_1,\alpha_2;\beta)$ is infinite.
\end{proposition}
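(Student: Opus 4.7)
The strategy is to show that, starting from the single solution $(\gamma,k,s_1,s_2)$, one can manufacture infinitely many $\l\in C(\alpha_1,\alpha_2;\beta)$ by enlarging the integer $k$. Fix $r\in\N$ with $\gamma\in\mathbb{F}_{p^r}$ (for instance the integer $r$ satisfying $\Fq=\mathbb{F}_{p^r}$). I first claim that $(\gamma,k+rN,s_1,s_2)$ is again a solution of~\eqref{eq:system_10} for every $N\in\N$. The key identity is
\begin{equation*}
\gamma^{p^{i(k+rN)\ell}}=\bigl(\gamma^{p^{iNr\ell}}\bigr)^{p^{ik\ell}}=\gamma^{p^{ik\ell}}\quad\text{for every }i\ge 0,
\end{equation*}
which holds because $iNr\ell$ is divisible by $r$ and $\gamma\in\mathbb{F}_{p^r}$ is fixed by the $p^r$-power Frobenius. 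Hence, term by term, the two sums defining $\delta_1$ and $\delta_2$ in~\eqref{eq:system_10} are unchanged when $k$ is replaced by $k+rN$, so the system is preserved.

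Next, for each $N\in\N$, I define
\begin{equation*}
S_N\colonequals\left\{\l\in\Lbar\colon f_\l^{k+rN}(\alpha_1)=\alpha_1+\gamma\right\}
\end{equation*}
and show $S_N\subseteq C(\alpha_1,\alpha_2;\beta)$. Indeed, the construction used in the proof of Proposition~\ref{prop:findings} applies verbatim to the tuple $(\gamma,k+rN,s_1,s_2)$: iterating the defining relation via the recurrence~\eqref{eq:87} gives, for every $\l\in S_N$, the equalities $f_\l^{(k+rN)s_1}(\alpha_1)=\alpha_1+\delta_1=\alpha_2$ and $f_\l^{(k+rN)s_2}(\alpha_1)=\alpha_1+\delta_2=\beta$. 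Since $d=p^\ell$ makes $f_\l$ a permutation of $\Lbar$, this also yields $f_\l^{(k+rN)(s_2-s_1)}(\alpha_2)=\beta$, so $\l\in C(\alpha_1,\alpha_2;\beta)$ as claimed.

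To conclude, I bound $|S_N|$ from below. Using~\eqref{eq:84}, the defining equation for $S_N$ rewrites as
\begin{equation*}
\sum_{i=0}^{k+rN-1}\l^{p^{i\ell}}=\alpha_1+\gamma-\alpha_1^{p^{(k+rN)\ell}},
\end{equation*}
whose left-hand side is an $\Fp$-additive polynomial in $\l$ of degree $p^{(k+rN-1)\ell}$ with derivative $1$, hence separable. Therefore $|S_N|=p^{(k+rN-1)\ell}$, which tends to infinity with $N$. Since $S_N\subseteq C(\alpha_1,\alpha_2;\beta)$ for every $N$, the set $C(\alpha_1,\alpha_2;\beta)$ must be infinite. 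The only genuinely substantive step is the verification that $k\mapsto k+rN$ preserves~\eqref{eq:system_10}; once that is in place, the rest is immediate from the construction in Proposition~\ref{prop:findings} and the standard root count for a separable additive polynomial.
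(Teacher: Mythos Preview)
Your proof is correct and follows essentially the same approach as the paper's: both observe that replacing $k$ by $k+r$ (where $q=p^r$) preserves the system~\eqref{eq:89}--\eqref{eq:90} term by term since $\gamma\in\Fq$, and then use the separability of the additive equation $f_\l^k(\alpha_1)=\alpha_1+\gamma$ to produce arbitrarily many distinct $\l$. Your write-up is slightly more explicit in defining the sets $S_N$ and counting their sizes, but the argument is the same.
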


\begin{proof}
We write $q\colonequals p^r$ for some $r\in\N$. We note that once we have a solution $(\gamma,k,s_1,s_2)$ to the system~\eqref{eq:89}-\eqref{eq:90}, $(\gamma,k+r,s_1,s_2)$ also solves the above system. However, this new solution to the system leads to a different element in $C(\alpha_1,\alpha_2;\beta)$. Indeed, for any such solution $(\gamma,k,s_1,s_2)$, the corresponding parameter $\l\in C(\alpha_1,\alpha_2;\beta)$ satisfies
\begin{equation}
\label{eq:91}
f_\l^k(\alpha_1)=\alpha_1+\gamma\text{, }f_\l^{ks_1}(\alpha_1)=f_\l^{m-n}(\alpha_1)=\alpha_1+\delta_1\text{ and }f_\l^{ks_2}(\alpha_1)=f_\l^m(\alpha_1)=\alpha_1+\delta_2.
\end{equation}   
Combined with~\eqref{eq:84}, the first equation from \eqref{eq:91} yields that
\begin{equation}
\label{eq:92}
\alpha_1^{p^{k\ell}}+\sum_{i=0}^{k-1}\l^{p^{i\ell}}=\alpha_1+\gamma.
\end{equation}
Equation~\eqref{eq:92} in $\l$ is a separable equation of degree $p^{(k-1)\ell}$ and hence has distinct solutions. So, increasing $k$ leads to additional solutions to the new equation~\eqref{eq:92}. Therefore, we have infinitely many elements in $C(\alpha_1,\alpha_2;\beta)$ simply assuming the existence of one solution $(\gamma,k,s_1,s_2)$ to the system of equations~\eqref{eq:89}~and~\eqref{eq:90}.
\end{proof}


\subsection{Conclusion of our proof for Theorem~\ref{thm:converse_ii}}

If the system~\eqref{eq:system_1} has no solutions, then $C(\alpha_1,\alpha_2;\beta)$ must be empty by Proposition~\ref{prop:findings}. Now, if the system~\eqref{eq:system_1} has a solution, then Proposition~\ref{prop:one_to_infinite} yields that there are actually infinitely many $\l\in C(\alpha_1,\alpha_2;\beta)$. 

This concludes our proof of Theorem~\ref{thm:converse_ii}.


\subsection{Sometimes the set $C(\alpha_1,\alpha_2;\beta)$ is empty}

Next, we show that condition~\eqref{eq:79} alone from Theorem~\ref{thm:converse_ii} does \emph{not} always guarantee the existence of infinitely many $\l\in C(\alpha_1,\alpha_2;\beta)$; in other words, there are instances when the  system~\eqref{eq:system_1} is not solvable (despite the fact that $\alpha_2-\alpha_1,\beta-\alpha_1\in\Fpbar$) and therefore,  $C(\alpha_1,\alpha_2;\beta)$ is empty.

\begin{proposition}
\label{prop:not_always}
Let $d=p^\ell$ and let $\alpha_1,\alpha_2,\beta\in L$. If $\delta_1=\alpha_2-\alpha_1$ and $\delta_2=\beta-\alpha_1$ simultaneously satisfy the following conditions:
\begin{itemize}
\item[(1)] $\delta_1,\delta_2\in\mathbb{F}_{p^2}\setminus\Fp$, 
\item[(2)] $\delta_1-\delta_2\notin\Fp$, and
\item[(3)] $\frac{\delta_1}{\delta_2}\notin\Fp$,
\end{itemize}
then $C(\alpha_1,\alpha_2;\beta)$ is empty.
\end{proposition}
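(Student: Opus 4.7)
My plan is to apply Proposition~\ref{prop:findings} with $\Fq = \mathbb{F}_{p^2}$, which by condition~(1) contains both $\delta_1$ and $\delta_2$. By the biconditional there, it suffices to show that the system~\eqref{eq:system_1} admits \emph{no} solution $(\gamma,k,s_1,s_2) \in \mathbb{F}_{p^2}^* \times \N \times \N \times \N$. Writing $\sigma$ for the Frobenius $x \mapsto x^{p^{k\ell}}$ and $T_s(\gamma) \colonequals \sum_{i=0}^{s-1}\sigma^i(\gamma)$, the system becomes $\delta_1 = T_{s_1}(\gamma)$ and $\delta_2 = T_{s_2}(\gamma)$.

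The argument then splits according to the $\Fp$-subfield of $\mathbb{F}_{p^2}$ containing $\gamma$ and the parity of $k\ell$. First, if $\gamma \in \Fp$ then $\sigma$ fixes $\gamma$, so $T_{s_j}(\gamma) = s_j\gamma \in \Fp$, which contradicts $\delta_j \notin \Fp$ from condition~(1). So I may assume $\gamma \in \mathbb{F}_{p^2}\setminus \Fp$. Since $\gamma^{p^2} = \gamma$, the map $\sigma$ acts on $\mathbb{F}_{p^2}$ as the identity when $k\ell$ is even and as the nontrivial Galois automorphism $x \mapsto x^p$ when $k\ell$ is odd.

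In the even subcase, $\delta_j = T_{s_j}(\gamma) = s_j\gamma$ for $j=1,2$; since $\delta_j \ne 0$ (condition~(1)), we must have $s_j \not\equiv 0 \pmod p$, and then $\delta_1/\delta_2 = s_1 s_2^{-1} \in \Fp^*$, contradicting condition~(3). In the odd subcase, setting $\gamma' \colonequals \gamma^p$, a direct count of indices gives $T_s(\gamma) = (s/2)(\gamma + \gamma') \in \Fp$ when $s$ is even (because $\gamma+\gamma'$ is the $\mathbb{F}_{p^2}/\Fp$-trace), and $T_s(\gamma) = \gamma + q(\gamma + \gamma')$ when $s = 2q+1$ is odd. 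Since $\delta_j \notin \Fp$, the even option is ruled out, so both $s_1$ and $s_2$ must be odd; writing $s_j = 2q_j + 1$, I compute $\delta_1 - \delta_2 = (q_1 - q_2)(\gamma + \gamma') \in \Fp$, contradicting condition~(2).

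These three exclusions exhaust every possible $(\gamma, k, s_1, s_2)$, so the system~\eqref{eq:system_1} has no solution in $\mathbb{F}_{p^2}^* \times \N^3$, and Proposition~\ref{prop:findings} yields $C(\alpha_1,\alpha_2;\beta) = \emptyset$. I do not foresee any genuine obstacle: conditions~(1), (2), (3) have been engineered precisely to block the three possible $\Fp$-linear shapes that $T_s(\gamma)$ can take on $\mathbb{F}_{p^2}$, so once the case split is set up the remainder is routine bookkeeping.
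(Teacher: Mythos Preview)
Your proof is correct and follows essentially the same approach as the paper's: invoke Proposition~\ref{prop:findings} with $\Fq=\mathbb{F}_{p^2}$ and rule out all solutions by splitting on the parity of $k\ell$, using condition~(3) in the even case and conditions~(1) and~(2) in the odd case. Your preliminary treatment of $\gamma\in\Fp$ is a harmless extra step that the paper simply absorbs into the parity split, but otherwise the two arguments coincide.
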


\begin{remark}
\label{rem:no_solutions}
For each prime $p>2$, one can construct examples where Proposition~\ref{prop:not_always} applies. For instance, choosing $(\delta_1,\delta_2)=(\epsilon, 2\epsilon +1)$ for any $\epsilon\in\mathbb{F}_{p^2}\setminus\Fp$ satisfies the conditions~(1)-(3) in Proposition~\ref{prop:not_always}. Therefore, the corresponding set $C(\alpha_1,\alpha_2;\beta)$ is empty, even though the points $\alpha_1,\alpha_2,\beta$ satisfy the alternative~(ii) from the conclusion of Theorem~\ref{thm:main}. 
\end{remark}

\begin{proof}[Proof of Proposition~\ref{prop:not_always}.]
We argue by contradiction and assume $C(\alpha_1, \alpha_2; \beta)\neq \emptyset$. Therefore, there exists $\l\in\Lbar$ and  $m,n\in\N$ such that $f_\l^m(\alpha_1)=f_\l^n(\alpha_2)=\beta$. As shown in Lemma~\ref{lem:actually}, we may assume that $m>n$. By Proposition~\ref{prop:findings}, this triple $(\l,m,n)\in\Lbar\times\N\times\N$ leads to a solution $(\gamma,k,s_1,s_2)\in\mathbb{F}_{p^2}\times\N\times\N\times\N$ to the system of equations~\eqref{eq:89}~and~\eqref{eq:90}. Note that, due to~\eqref{eq:830}, we can assume $\gamma\in\mathbb{F}_{p^2}$ because $\delta_1,\delta_2\in\mathbb{F}_{p^2}$. We will show that there are no solutions $(\gamma,k,s_1,s_2)\in\mathbb{F}_{p^2}\times\N\times\N\times\N$ to the system~\eqref{eq:89}-\eqref{eq:90}. The proof is divided into two cases based on the parity of $k\cdot \ell$.

\begin{lemma}
\label{lem:even_no}
With the above notation, there are no solutions to the system~\eqref{eq:89}-\eqref{eq:90} if $k\cdot \ell$ is even.
\end{lemma}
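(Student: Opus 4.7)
The plan is to exploit the smallness of $\mathbb{F}_{p^2}$ under the Frobenius $\sigma\colon x\mapsto x^{p^{k\ell}}$: when $k\ell$ is even, this Frobenius acts trivially on $\mathbb{F}_{p^2}$, which will collapse both sums in the system~\eqref{eq:89}-\eqref{eq:90} to simple scalar multiples of $\gamma$. Since the stated conditions (1)--(3) on $\delta_1,\delta_2$ are incompatible with such a collapse, the system cannot have a solution.

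More concretely, first I would observe that by assumption $\gamma\in \mathbb{F}_{p^2}$, so $\gamma^{p^2}=\gamma$. When $k\ell$ is even, this yields $\gamma^{p^{k\ell}}=\gamma$, and then by iteration $\gamma^{p^{ik\ell}}=\gamma$ for every $i\ge 0$. Substituting into \eqref{eq:89}-\eqref{eq:90}, the system reduces to
\begin{equation*}
\delta_1 \;=\; s_1\,\gamma, \qquad \delta_2 \;=\; s_2\,\gamma,
\end{equation*}
where the integers $s_1,s_2$ are read as elements of $\mathbb{F}_p\subset \mathbb{F}_{p^2}$.

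Next, I would verify that neither $s_1$ nor $s_2$ can vanish modulo $p$. Since $\gamma\in \mathbb{F}_q^{\ast}\subseteq \mathbb{F}_{p^2}^{\ast}$, having $s_1\equiv 0\pmod p$ would force $\delta_1=0$, contradicting condition~(1) which requires $\delta_1\in\mathbb{F}_{p^2}\setminus\mathbb{F}_p$ (so in particular $\delta_1\ne 0$). The same reasoning applied to $\delta_2$ (which is also nonzero by (1)) gives $s_2\not\equiv 0\pmod p$. Thus $s_1,s_2\in\mathbb{F}_p^{\ast}$.

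Finally, dividing the two reduced equations, I would conclude $\delta_1/\delta_2 = s_1/s_2\in\mathbb{F}_p$, directly contradicting condition~(3). This rules out the existence of any solution $(\gamma,k,s_1,s_2)\in\mathbb{F}_{p^2}\times\mathbb{N}\times\mathbb{N}\times\mathbb{N}$ to the system when $k\ell$ is even, completing the proof. The argument is essentially a one-line application of $\sigma=\mathrm{id}$ on $\mathbb{F}_{p^2}$ followed by the three incompatibility checks; no step poses a real obstacle, though one must be careful to distinguish the integer $s_j$ from its image in $\mathbb{F}_p$ when asserting $s_j\gamma\ne 0$.
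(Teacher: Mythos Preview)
Your proof is correct and follows essentially the same approach as the paper: since $\gamma\in\mathbb{F}_{p^2}$ and $k\ell$ is even, the Frobenius powers collapse so that the system becomes $\delta_1=s_1\gamma$ and $\delta_2=s_2\gamma$, whence $\delta_1/\delta_2\in\mathbb{F}_p$, contradicting condition~(3). You include the additional verification that $s_1,s_2\not\equiv 0\pmod p$, which the paper leaves implicit (it follows immediately from $\delta_1,\delta_2\ne 0$ via condition~(1)).
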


\begin{proof}[Proof of Lemma~\ref{lem:even_no}.]
Since any sought solution $\gamma$ of the system~\eqref{eq:89}-\eqref{eq:90} lives in $\mathbb{F}_{p^2}$, we have $\gamma^{p^{ik\ell}}=\gamma$ for each $i\ge 0$. So, the system~\eqref{eq:89}-\eqref{eq:90} simplifies to
\begin{equation}
\label{eq:93}
s_1\cdot \gamma=\delta_1\text{ and }s_2\cdot \gamma=\delta_2.
\end{equation}
However, \eqref{eq:93} implies $\delta_1/\delta_2=s_1/s_2$, which contradicts condition~(3) from the hypotheses of Proposition~\ref{prop:not_always}. Thus, no solutions exist when $k\ell$ is even. 
\end{proof}

\begin{lemma}
\label{lem:odd_no}
With the above notation, there are no solutions to the system~\eqref{eq:89}-\eqref{eq:90} if $k\cdot \ell$ is odd.
\end{lemma}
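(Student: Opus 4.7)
The plan is to exploit the fact that, for $\gamma\in\mathbb{F}_{p^2}$, the Frobenius $x\mapsto x^p$ has order $2$, so the exponents $p^{ik\ell}$ act on $\gamma$ in a strictly periodic way once the parity of $ik\ell$ is known. When $k\ell$ is odd, the parity of $ik\ell$ agrees with the parity of $i$, which reduces the sums in~\eqref{eq:system_1} to very simple expressions involving $\gamma$ and $\gamma^p$; the hypotheses (1)--(3) will then yield a direct contradiction.

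Concretely, I would first record that since $\gamma\in\mathbb{F}_{p^2}$ and $k\ell$ is odd, one has $\gamma^{p^{ik\ell}}=\gamma$ when $i$ is even and $\gamma^{p^{ik\ell}}=\gamma^p$ when $i$ is odd. Writing $\tau\colonequals \gamma+\gamma^p\in\mathbb{F}_p$, this gives the following case analysis for $j\in\{1,2\}$:
\begin{equation*}
\sum_{i=0}^{s_j-1}\gamma^{p^{ik\ell}}=\begin{cases} t_j\,\tau & \text{if }s_j=2t_j,\\ t_j\,\tau+\gamma & \text{if }s_j=2t_j+1.\end{cases}
\end{equation*}
Now observe that if $s_j$ were even, then $\delta_j=t_j\tau\in\mathbb{F}_p$, contradicting condition~(1) which requires $\delta_j\in\mathbb{F}_{p^2}\setminus\mathbb{F}_p$. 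Hence both $s_1$ and $s_2$ must be odd, say $s_1=2t_1+1$ and $s_2=2t_2+1$, and the system~\eqref{eq:system_1} becomes
\begin{equation*}
\delta_1=\gamma+t_1\tau,\qquad \delta_2=\gamma+t_2\tau.
\end{equation*}

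Subtracting these two equations gives $\delta_1-\delta_2=(t_1-t_2)\tau\in\mathbb{F}_p$, which directly contradicts condition~(2) of Proposition~\ref{prop:not_always}. Therefore the system~\eqref{eq:system_1} admits no solution $(\gamma,k,s_1,s_2)\in\mathbb{F}_{p^2}^{\ast}\times\mathbb{N}\times\mathbb{N}\times\mathbb{N}$ with $k\ell$ odd, as claimed. The only subtlety in the argument is the correct bookkeeping of Frobenius orbits of length $2$ inside the partial sums; conditions (2) and (3) in Proposition~\ref{prop:not_always} are precisely tailored to block both the even-$k\ell$ case (treated in Lemma~\ref{lem:even_no} via condition (3)) and the odd-$k\ell$ case (handled here via condition (2)), so no further obstacle arises.
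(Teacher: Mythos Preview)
Your proof is correct and follows essentially the same approach as the paper: both compute the partial sums $\sum_{i=0}^{s_j-1}\gamma^{p^{ik\ell}}$ in terms of $\gamma$ and its trace $\gamma+\gamma^p$, rule out even $s_j$ via condition~(1), and then eliminate the remaining case (both $s_j$ odd) via condition~(2). The only cosmetic difference is that the paper writes ${\rm Tr}_{\mathbb{F}_{p^2}/\mathbb{F}_p}(\gamma)$ where you write $\tau$.
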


\begin{proof}[Proof of Lemma~\ref{lem:odd_no}.]
In this case, we know that for any even $s\in\N$, we have that 
\begin{equation}
\label{eq:94}
\sum_{i=0}^{s-1} \gamma^{p^{ik\ell}}= \frac{s}{2}\cdot {\rm Tr}_{\mathbb{F}_{p^2}/\Fp}(\gamma),
\end{equation}
while for any odd $s\in\N$, we have that
\begin{equation}
\label{eq:95}
\sum_{i=0}^{s-1}\gamma^{p^{ik\ell}}=\gamma + \frac{s-1}{2}\cdot {\rm Tr}_{\mathbb{F}_{p^2}/\Fp}(\gamma).
\end{equation}
So, if $s_j$ is  even for some $j=1,2$, then \eqref{eq:94} shows that the system~\eqref{eq:89}-\eqref{eq:90} leads to $\delta_j\in\Fp$, which contradicts condition~(1) from the hypotheses of Proposition~\ref{prop:not_always}. If both $s_1$ and $s_2$ are odd, then \eqref{eq:94} yields that $\delta_1-\delta_2\in\Fp$, which contradicts condition~(2) from the hypotheses of Proposition~\ref{prop:not_always}. In all cases, we reach a contradiction, so no solution to the system~\eqref{eq:89}-\eqref{eq:90} can exist if $k\ell$ is odd.
\end{proof}

Combining Lemmas~\ref{lem:even_no} and \ref{lem:odd_no} concludes our proof of Proposition~\ref{prop:not_always}.
\end{proof}


\section{The case when all the points live in a finite field}
\label{subsec:fpbar}

Throughout this Section, we work with $\alpha_1,\alpha_2,\beta\in\Fpbar$ and the family of polynomials $f_\l(z)\colonequals z^d+\l$ parameterized by $\l\in\Fpbar$. Our goal is to determine whether the set
\begin{equation}
\label{eq:010}
C(\alpha_1,\alpha_2;\beta)=\left\{\l\in\Fpbar\colon\text{ there exist $m,n\in\N$ such that $f_\l^m(\alpha_1)=f_\l^n(\alpha_2)=\beta$}\right\}.
\end{equation}
is infinite. Note that restricting $\lambda$ to $\Fpbar$ is sufficient, as Lemma~\ref{lem:trdeg_lambda} implies that if $f_\l^m(\alpha_1)=\beta$, then we have that $\l\in \overline{\Fp(\alpha_1,\beta)}=\Fpbar$ because $\alpha_1, \beta\in\Fpbar$. 

Since Theorem~\ref{thm:converse_ii} provides an explicit answer when $d=p^\ell$ (everything depends on the existence of a solution to a system~\eqref{eq:system_010} of equations), we assume that $d$ is \emph{not} a power of $p$. Under this hypothesis, as long as $\alpha_1^d\ne \alpha_2^d$, there is no visible dynamical relation \emph{globally} between $\alpha_1$ and $\alpha_2$ with respect to the \emph{entire} family of polynomials $f_\l(z)$. So, drawing on the intuition from Theorem~\ref{thm:main_0}, one would expect that if $\alpha_1^d\ne \alpha_2^d$, then $C(\alpha_1,\alpha_2;\beta)$ is finite. However, based on extensive computation (of more than $100$ examples), we believe the \emph{opposite} is true.

\begin{conjecture}
\label{conf:fpbar}
Let $d\ge 2$ be an integer which is not a power of $p$ and let  $\alpha_1,\alpha_2,\beta\in \Fpbar$. For any $\l\in\Fpbar$, let $f_\l(z)\colonequals z^d+\l$.  Then the set $C(\alpha_1,\alpha_2;\beta)$ (see \eqref{eq:010}) is infinite.
\end{conjecture}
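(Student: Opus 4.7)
The plan is to attack Conjecture~\ref{conf:fpbar} via a quantitative density analysis over a tower of finite extensions of $\Fp$, combined with separability observations for the orbit polynomials and equidistribution on arboreal preimage trees. First, fix a finite subfield $\Fq\subseteq\Fpbar$ containing $\alpha_1,\alpha_2,\beta$, and write $d=p^a s$ with $\gcd(s,p)=1$ and $s\ge 2$, which is possible precisely because $d$ is not a power of $p$. A preliminary step is to establish separability of $P_{m,\alpha_j}(\l)-\beta\in\Fq[\l]$. When $a\ge 1$, differentiating the recurrence $P_{m+1,\alpha}(\l)=P_{m,\alpha}(\l)^d+\l$ shows $P'_{m+1,\alpha}(\l)\equiv 1\pmod{p}$, so $P_{m,\alpha_j}(\l)-\beta$ has exactly $d^{m-1}$ distinct roots in $\Fpbar$; when $a=0$, one argues separability generically. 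By Theorem~\ref{thm:converse_i}, the sets $S_j\colonequals\{\l\in\Fpbar:\beta\in\OO_{f_\l}(\alpha_j)\}$ for $j=1,2$ are each individually infinite; the task is to show $S_1\cap S_2$ is infinite.

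The main thrust is a density count over the tower $\{\mathbb{F}_{q^r}\}_{r\ge 1}$. Letting $N_j(r)\colonequals|S_j\cap\mathbb{F}_{q^r}|$, a functional-graph heuristic (treating $f_\l$ as a random self-map of $\mathbb{F}_{q^r}$, in the spirit of Pollard-Brent) predicts $N_j(r)=\Theta(q^{r/2})$, since the forward orbit of $\alpha_j$ typically has length $\Theta(q^{r/2})$ and the probability that $\beta$ lies on this orbit is $\Theta(q^{-r/2})$. If the events ``$\beta\in\OO_{f_\l}(\alpha_1)$'' and ``$\beta\in\OO_{f_\l}(\alpha_2)$'' were genuinely uncorrelated for uniformly random $\l\in\mathbb{F}_{q^r}$, pigeonhole would give $|S_1\cap S_2\cap\mathbb{F}_{q^r}|=\Theta(1)$ at each level $r$, yielding the infinitude of $C(\alpha_1,\alpha_2;\beta)$ upon summation. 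The plan is to make this precise via a Chebotarev-type equidistribution for the arboreal Galois action on the preimage tree $T_\beta(\l)\colonequals\bigsqcup_{n\ge 0}f_\l^{-n}(\beta)$, following positive-characteristic analogues of the work of Jones and collaborators on arboreal representations.

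The principal obstacle will be the rigorous establishment of this (partial) independence together with effective lower bounds on the arboreal Galois images in positive characteristic. Arboreal representations in characteristic $p$ can degenerate unexpectedly through wild ramification or via constraints imposed by the Frobenius $\l\mapsto\l^p$, and the Moosa-Scanlon machinery powering Theorem~\ref{thm:converse_i} handles one-parameter subgroups of $\mathbb{G}_m^2$ rather than the bivariate correlation needed here. A more explicit alternative would be a combinatorial analysis of the functional graph of $f_\l$ on $\mathbb{F}_{q^r}$, prescribing cycle structures that force a common collision at $\beta$. Either route appears to require substantially new input beyond the methods developed in the present paper, which is consistent with the conjectural status of Conjecture~\ref{conf:fpbar}.
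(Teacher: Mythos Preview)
The statement you are attempting to prove is labeled a \emph{Conjecture} in the paper, and the paper makes no claim to prove it. Section~\ref{subsec:fpbar} offers only numerical evidence (Tables~\ref{tab:F2_factors}--\ref{tab:F5:d=10:[1,2,3]}) and heuristic discussion, together with a refined Conjecture~\ref{conf:fpbar:perm-poly} for the special case $\alpha_1=\beta$. So there is no ``paper's own proof'' to compare against; your proposal should be evaluated on its own terms as a plan of attack on an open problem.

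Viewed that way, your outline is an honest heuristic sketch rather than a proof, and you correctly flag the decisive gap yourself: the random-map model and the asserted independence of the events ``$\beta\in\OO_{f_\l}(\alpha_j)$'' for $j=1,2$ are not established. Two further issues deserve emphasis. First, even granting independence, your count $|S_1\cap S_2\cap\mathbb{F}_{q^r}|=\Theta(1)$ is \emph{bounded}, so ``summation over $r$'' does not by itself yield infinitude of $C(\alpha_1,\alpha_2;\beta)$: a fixed finite set would satisfy the same bound at every level. You would need either unbounded growth in $r$ or an argument producing genuinely new $\l$ at infinitely many levels. (The paper's tables in fact suggest growth, not $\Theta(1)$, so the independence heuristic may be too pessimistic, likely because the size of the backward orbit of $\beta$ has heavy-tailed fluctuations.) Second, your separability claim is correct when $p\mid d$ but the ``generic'' separability when $p\nmid d$ is not justified and is not obviously true for all $m$; in any case separability is peripheral to the real obstruction. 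The core difficulty remains exactly what you identify at the end: there is at present no rigorous positive-characteristic equidistribution or arboreal-image result strong enough to convert the functional-graph heuristic into a theorem, and nothing in the paper supplies one.
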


This Section is organized as follows. In Subsection~\ref{subsec:9.1}, we describe the algorithm used for testing various cases in Conjecture~\ref{conf:fpbar}. In Subsection~\ref{subsec:9.2}, we formulate Conjecture~\ref{conf:fpbar:perm-poly}, which is a refinement of Conjecture~\ref{conf:fpbar} in the special case $\alpha_1=\beta$. We gathered in Subsection~\ref{subsec:9.3} some of the many examples we tested, all of which support both Conjectures~\ref{conf:fpbar}~and~\ref{conf:fpbar:perm-poly}. In Subsection~\ref{subsec:9.4}, we present an example for a different family of polynomials, which in turn suggests Question~\ref{quest:general}. Finally, in Subsection~\ref{subsec:9.5}, we conclude our paper with a brief discussion about the collision of multiple orbits and formulate Question~\ref{quest:multiple}. 


\subsection{Algorithm for testing Conjecture~\ref{conf:fpbar}}
\label{subsec:9.1}

Since we are now studying the case where $\alpha_1, \alpha_2, \beta\in\Fpbar$, we fix a sufficiently large $q$ such that $\alpha_1, \alpha_2, \beta\in\mathbb{F}_q$. For a fixed $\alpha$ and $n\in\mathbb{N}$, recall that $P_{n, \alpha}(\lambda)\colonequals f_{\lambda}^{m}(\alpha)$ is a polynomial in $\lambda$ with degree $d^{n-1}$. By definition, $f^{m}_{\lambda}(\alpha_1)=\beta$ if and only if $\lambda$ is a root of $P_{m, \alpha_1}(\lambda)-\beta=0$.  Directly finding the roots of the polynomials $P_{m, \alpha_1}(\lambda)-\beta$ and $P_{n, \alpha_2}(\lambda)-\beta$ is often infeasible due to their high degrees. We now describe a more efficient algorithm for finding values of $\lambda$ in $C(\alpha_1, \alpha_2; \beta)$ by searching for common factors of these polynomials over finite fields.

The algorithm proceeds by checking, for each irreducible polynomial $g(\lambda)$ over a finite field $\mathbb{F}_q$, whether it divides $P_{n, \alpha}(\lambda)-\beta$ for some $n\in\mathbb{N}$. This check is performed efficiently in the quotient ring $\mathbb{F}_q[\lambda]/\langle g(\lambda)\rangle$. For a given $\alpha, \beta\in\mathbb{F}_q$ and an irreducible polynomial $g(\lambda)\in\mathbb{F}_q[\lambda]$, the procedure is as follows: 

\begin{enumerate}
    \item We recursively compute the sequence of remainders $r_n \colonequals f_{\lambda}^{n}(\alpha) \pmod{g(\lambda)}$ by setting $r_0=\alpha$ and computing $r_n = r_{n-1}^{d}+\lambda$ in $\mathbb{F}_q[\lambda]/\langle g(\lambda)\rangle$ for $n\geq 1$.
    \item If $r_n=\beta$ for some $n\in\mathbb{N}$, then $g(\lambda)$ divides $P_{n, \alpha}(\lambda)-\beta$ and the algorithm stops for this particular polynomial $g(\l)$. Consequently, every root of $g(\lambda)=0$ is a solution to $f_{\lambda}^{n}(\alpha)=\beta$. 
    \item If we encounter a remainder $r_n$ that has appeared previously (i.e., $r_n=r_i$ for some $i<n$) and \emph{none} of the remainders $r_1,\dots, r_n$ are   equal to $\beta$, a cycle has been detected. We conclude that $\beta$ will not be reached, and we terminate the search for this polynomial $g(\lambda)$. Also, as an aside, the pair $(i,n)$ (where $i$ and $n$ are minimal with the property that $f_\l^i(\alpha)=f_\l^n(\alpha)$) is called the \emph{preperiodicity portrait} for $\alpha$ (under the action of $f_\l$). 
\end{enumerate}

The length of the sequence before repetition is bounded by $q^{\deg(g)}$. For practical implementation, we must set a maximum number of iterations. If this threshold is reached without finding $\beta$ or a cycle, the algorithm terminates inconclusively for $g(\lambda)$.

To find elements in $C(\alpha_1, \alpha_2, \beta)$, we apply this procedure to both $\alpha_1$ and $\alpha_2$. For each monic irreducible polynomial $g(\lambda)\in\mathbb{F}_q[\lambda]$, we search for an integer $m$ such that $g(\lambda)$  divides $P_{m, \alpha_1}(\lambda)-\beta$. If successful, we then search for an integer $n$ such that $g(\lambda)$ also divides $P_{n, \alpha_2}(\lambda)-\beta$. If both searches succeed, all roots of $g(\lambda)$ belong to the set $C(\alpha_1, \alpha_2; \beta)$. 

\subsection{The case where $\alpha_1=\beta$} 
\label{subsec:9.2}

We describe a special case in which permutation polynomials naturally arise. Suppose $\alpha_1=\beta\in\mathbb{F}_q$. We are searching for $\lambda\in\overline{\mathbb{F}_p}$ such that for some $m, n\in\mathbb{N}$, the following equalities hold:
\begin{equation*}
f_{\lambda}^{m}(\alpha_1)=\alpha_1 \quad \text{ and } \quad f_{\lambda}^{n}(\alpha_2)=\alpha_1. 
\end{equation*}
We now explain how first condition, $f^{m}_{\lambda}(\alpha_1)=\alpha_1$, is automatically satisfied for certain choices of $\lambda$. Suppose the polynomial $P_{n, \alpha_2}(\lambda)-\alpha_1$ has an irreducible factor $h(\lambda)$ of degree $k$ over $\mathbb{F}_q$. Let $\lambda_0\in\mathbb{F}_{q^k}$ be any root of $h(\lambda)$. If $\gcd(d, q^k-1)=1$, the map $z\mapsto z^d$ is a permutation of $\mathbb{F}_{q^k}$, which implies that $f_{\lambda_0}(z)=z^d+\lambda_0$ is also a permutation of $\mathbb{F}_{q^k}$. Since $\alpha_1\in\mathbb{F}_q\subseteq \mathbb{F}_{q^{k}}$, then  $\alpha_1$ must be periodic under $f_\l$; thus, $f_{\lambda_0}^{m}(\alpha_1)=\alpha_1$ for some $m\in\mathbb{N}$. The argument shows that when $\gcd(d, q^{k}-1)=1$, each root $\lambda_0\in\mathbb{F}_{q^k}$ of $h(\lambda)$ satisfies $f^m_{\lambda_0}(\alpha_1)=f^n_{\lambda_0}(\alpha_2)=\alpha_1$ for some $m, n\in\mathbb{N}$. Therefore, to prove that the set $C(\alpha_1, \alpha_2; \alpha_1)$ is infinite, it suffices to show that the polynomials $P_{n, \alpha_2}(\lambda)-\alpha_1$ for $n=1, 2, 3, \dots$ have irreducible factors of arbitrarily large degrees $k$ satisfying $\gcd(d, q^{k}-1)=1$. We state this prediction as a conjecture.

\begin{conjecture}
\label{conf:fpbar:perm-poly} Suppose $\alpha, \beta\in\mathbb{F}_q$. For $\l\in\Fpbar$, let $f_\l(z)\colonequals z^d+\l$. Define, as before, $P_{n, \alpha}(\lambda) = f_{\lambda}^{n}(\alpha)$; so, $P_{n, \alpha}(\lambda)\in \mathbb{F}_q[\lambda]$ is a polynomial of degree $d^{n-1}$. Suppose $\gcd(d, q-1)=1$. Then for each $M>0$, there is some $n\in\mathbb{N}$ such that the polynomial $P_{n, \alpha}(\lambda)-\beta$  
has an irreducible factor $g(\lambda)\in\mathbb{F}_q[\lambda]$  of degree $k>M$ satisfying $\gcd(d, q^{k}-1)=1$.
\end{conjecture}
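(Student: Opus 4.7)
My plan is to prove the conjecture by combining a degree-counting argument against the exponential growth of $\deg(P_{n,\alpha}(\lambda)-\beta) = d^{n-1}$ with a density argument for the admissible degrees $k$. The starting observation is that for any $M$, the total degree of the squarefree part of $P_{n,\alpha}(\lambda)-\beta$ supported on $\bigcup_{k\leq M}\mathbb{F}_{q^k}$ is at most $q + q^2 + \cdots + q^M < 2q^M$, simply because $\mathbb{F}_{q^k}$ has only $q^k$ elements. Once I verify that $P_{n,\alpha}(\lambda) - \beta$ is essentially separable --- by analyzing the recurrence $P_{n+1,\alpha}'(\lambda) = d\cdot P_{n,\alpha}(\lambda)^{d-1}\cdot P_{n,\alpha}'(\lambda) + 1$ for its derivative and bounding the common factors with $P_{n,\alpha}(\lambda) - \beta$ itself --- this forces an irreducible factor of degree exceeding $M$ as soon as $n$ is large enough that $d^{n-1}$ outstrips the squarefree-part bound together with the total multiplicity contribution.

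The delicate step is to ensure such a factor can be chosen with its degree $k$ satisfying the coprimality condition $\gcd(d, q^k - 1) = 1$. Let $B = \{k\in\N : \gcd(d, q^k-1)>1\} = \bigcup_{r\mid d}\, o_r\N$, where $o_r$ denotes the multiplicative order of $q$ modulo each prime divisor $r$ of $d$. The hypothesis $\gcd(d, q-1) = 1$ guarantees every $o_r \geq 2$, so $\N\setminus B$ has positive natural density (in particular, $k=1$ always lies in $\N\setminus B$). I would then aim to show that for each $M$ there is some $n$ for which $P_{n,\alpha}(\lambda) - \beta$ admits an irreducible factor of degree in $(M,\infty)\cap (\N\setminus B)$. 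The strategy I would pursue is to study the arboreal Galois group attached to the tower $\{P_{n,\alpha}(\lambda)-\beta\}_{n\geq 1}$ over $\Fq$ and to invoke a Chebotarev-type equidistribution for the Frobenius cycle structure, so that the degrees of irreducible factors are forced to be ``generic'' enough to include arbitrarily large elements of $\N\setminus B$.

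I expect this final step to be the main obstacle. The plain degree-counting argument is too coarse to rule out a conspiracy in which every large-degree factor has degree forced into the positive-density set $B$. A promising alternative route is suggested by Section~\ref{sec:i}, where the Moosa--Scanlon theorem on intersections of subvarieties of $\mathbb{G}_m^2$ with finitely generated subgroups was used to derive a contradiction from a finiteness assumption on roots; translating the hypothesis ``all sufficiently large-degree factors of $P_{n,\alpha}(\lambda)-\beta$ have degrees in $B$'' into an algebraic constraint of this flavor might allow a contradiction in the same spirit. The delicate interaction between the $p$-power part of $d$ and its prime-to-$p$ part (reflected in Lemma~\ref{lem:exact_form}) and potential wild ramification in the arboreal tower will complicate any direct Galois-theoretic analysis, and these complications are the main reason the statement remains only a conjecture supported by extensive numerical evidence.
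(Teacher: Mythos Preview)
The paper does \emph{not} prove this statement: it is explicitly presented as a conjecture, supported only by the numerical evidence in Tables~\ref{tab:F2_factors} and~\ref{tab:F5_d3_factors}. So there is no ``paper's own proof'' to compare against; you are attempting to prove an open problem.

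Your outline correctly identifies the two pieces one would need. The first piece --- that for $n$ large enough $P_{n,\alpha}(\lambda)-\beta$ must have an irreducible factor of degree exceeding any fixed $M$ --- is essentially sound: the total degree contributed by irreducible factors of degree at most $M$ is bounded by the number of elements in $\bigcup_{k\le M}\mathbb{F}_{q^k}$, and once you control multiplicities this is dominated by $d^{n-1}$. Your separability claim needs more care than you indicate: the recurrence $P_{n,\alpha}'=d\,P_{n-1,\alpha}^{d-1}P_{n-1,\alpha}'+1$ does not by itself force $\gcd(P_{n,\alpha}-\beta,\,P_{n,\alpha}')=1$ when $p\nmid d$, so one would need an independent bound on the inseparable part. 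When $p\mid d$ the derivative is identically $1$ and separability is immediate.

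The second piece is where the genuine gap lies, and you are honest about it. Knowing that a large-degree factor exists tells you nothing about \emph{which} residue class its degree falls in, and the bad set $B=\bigcup_{r\mid d} o_r\N$ has positive density, so a pure counting argument cannot exclude all large-degree factors landing in $B$. Your suggestions --- arboreal Galois equidistribution or a Moosa--Scanlon-style contradiction in the spirit of Section~\ref{sec:i} --- are plausible research directions, but neither is carried out, and you yourself concede that wild ramification and the mixed $p$-power/prime-to-$p$ structure of $d$ obstruct a direct Galois approach. In short, your proposal is a reasonable research plan rather than a proof, and the obstacle you identify is precisely why the paper leaves this as a conjecture.
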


The hypothesis $\gcd(d, q-1)=1$ is necessary. Indeed, if $\gcd(d, q-1)>1$, then $\gcd(d, q^k-1)>1$ for each $k\geq 1$. If $\gcd(d, q-1)=1$, then we can find infinitely many integers $k\geq 1$ such that $\gcd(d, q^k-1)=1$, so the conclusion of Conjecture~\ref{conf:fpbar:perm-poly} makes sense. Note that Conjecture~\ref{conf:fpbar:perm-poly} implies Conjecture~\ref{conf:fpbar} in the special case when $\alpha_1=\beta$.


\subsection{Computational evidence} 
\label{subsec:9.3}

We list several examples that provide evidence for both Conjecture~\ref{conf:fpbar} and Conjecture~\ref{conf:fpbar:perm-poly}. When counting irreducible polynomials over a finite field with specific properties, we always restrict our attention to monic irreducible polynomials.

\begin{example}
We work over $\mathbb{F}_2$. Let $d=3$, $\alpha_1=1$, $\alpha_2=0$, and $\beta=1$. We are in the special case when $\alpha_1=\beta$. Since $\gcd(3, 2^{k}-1)=1$ if and only if $k\in\mathbb{N}$ is odd, we seek any odd-degree irreducible factors of the polynomial $P_{n, 0}(\lambda)-1 = f_{\lambda}^{n}(0)-1$. Any root of such a factor will belong to $C(1, 0; 1)$. Table \ref{tab:F2_factors} lists the degrees of the irreducible factors of $P_{n, 0}(\lambda)-1$ for $1\leq n\leq 11$; newly appearing odd degrees are highlighted.

\begin{table}[h!]
\centering
\caption{Degrees of irreducible factors of $P_{n, 0}(\lambda)-1$ for $n=1, \dots, 11$}
\label{tab:F2_factors}
\begin{tabular}{@{}c|l@{}}
\hline\hline 
$n$ & Degrees of irreducible factors (all factors are simple) \\
\hline 
1   & \textbf{1} \\
2   & \textbf{3} \\
3   & 1, 3, \textbf{5} \\
4   & \textbf{27} \\
5   & 1, 38, 42 \\
6   & 18, \textbf{21}, \textbf{43}, \textbf{71}, 90 \\
7   & 1, 5, 38, \textbf{121}, 564 \\
8   & 3, \textbf{97}, 214, \textbf{375}, 1498 \\
9   & 1, 12, 16, \textbf{1205}, \textbf{5327} \\
10  & 3, \textbf{15}, 22, 22, 34, \textbf{61}, 82, \textbf{161}, 240, 334, 428, \textbf{4429}, 13852 \\
11 & 1, \textbf{16189}, \textbf{42859}  \\
\hline\hline 
\end{tabular}
\end{table}

The presence of factors with large odd degrees, such as $16189$ and $42859$  for $n=11$, strongly suggests that $C(1,0; 1)$ is infinite. Thus, Table~\ref{tab:F2_factors} numerically supports Conjecture~\ref{conf:fpbar:perm-poly}.

As direct factorization of $P_{n,0}(\lambda)-1$ (a polynomial of degree $3^{n-1}$) is computationally intensive, we also use the general algorithm. Table~\ref{tab:F2_gcd_factors} shows the number of irreducible polynomials $g(\lambda)$ of a given degree that divide $\gcd(P_{m,1}(\lambda)-1, P_{n, 0}(\lambda)-1)$ for some $m, n\in\mathbb{N}$.

\begin{table}[h!]
\centering
\caption{Irreducible polynomials that divide $\gcd(P_{m,1}(\lambda)-1, P_{n, 0}(\lambda)-1)$}
\label{tab:F2_gcd_factors}
\begin{tabular}{cccccccccccccc}
\hline\hline
degree of the irreducible polynomial & 1 & 2 & 3 & 4 & 5 & 6 & 7 & 8 & 9 & 10 & 11 & 12 & 13 \\
\hline
number of successful polynomials & 1 & 0 & 2 & 0 & 3 & 0 & 7 & 0 & 31 & 1 & 89 & 4 & 325 \\
\hline\hline
\end{tabular}
\end{table}
The final entry shows $325$ distinct irreducible polynomials of degree $13$ over $\mathbb{F}_2$. These alone yield $325\cdot 13 = 4225$ different values of $\lambda\in\overline{\mathbb{F}}_2$ in the set $C(1, 0; 1)$. Even without Table~\ref{tab:F2_factors}, we see that Table~\ref{tab:F2_gcd_factors} supports Conjecture~\ref{conf:fpbar}.
\end{example} 

\begin{example}
We work over $\mathbb{F}_5$. Let $d=3$, $\alpha_1 = 2$, $\alpha_2 = 1$, and $\beta = 2$. Again, we are in the special case $\alpha_1=\beta$. Next, $\gcd(3, 5^{k}-1)=1$ if and only if $k\in\mathbb{N}$ is odd. We seek any odd-degree irreducible factors of the polynomial $P_{n,1}(\lambda)-2 = f_{\lambda}^{n}(1)-2$. Table~\ref{tab:F5_d3_factors} lists the degrees of irreducible factors of $P_{n,1}(\lambda)-2$ for each $1\leq n\leq 12$; newly appearing odd degrees are highlighted. The presence of factors with large odd degrees, such as $163341$ for $n=12$, strongly suggests that $C(2,1; 2)$ is infinite. Thus, Table~\ref{tab:F5_d3_factors} numerically supports Conjecture~\ref{conf:fpbar:perm-poly}.

\begin{table}[h!]
\centering
\caption{Degrees of irreducible factors of $P_{n,1}(\lambda)-2$ for $n=1, \dots, 12$}
\label{tab:F5_d3_factors}
\begin{tabular}{@{}c|l@{}}
\hline\hline 
$n$ & Degrees of irreducible factors (all factors are simple) \\
\hline 
1   & \textbf{1} \\
2   & 1, 2 \\
3   & \textbf{3}, 6 \\
4   & 4, \textbf{7}, 16 \\
5   & 1, 1, \textbf{11}, 22, \textbf{45} \\
6   & 2, \textbf{5}, 10, 226 \\
7   & 26, 52, 250, \textbf{401} \\
8   & 1, 3, 3, 4, 5, 6, 13, 23, 64, \textbf{95}, \textbf{149}, \textbf{353}, 1468 \\
9   & 1, 20, \textbf{27}, \textbf{757}, 1082, 4674 \\
10  & 2, 4, \textbf{21}, 1632, 3932, 14092 \\
11  & 1, 6, \textbf{99}, 106, \textbf{205}, 280, 446, 778, 2370, 22642, 32116 \\
12  & 3, 3, 4, 7, 38, \textbf{71}, 13680, \textbf{163341} \\
\hline\hline 
\end{tabular}
\end{table}

\end{example}

\begin{example} We work over $\mathbb{F}_3$. Let $d=2$, $\alpha_1=0$, $\alpha_2=1$, and $\beta=2$. Table~\ref{tab:F3:d=2:[0,1,2]} shows the number of irreducible polynomials of a given degree dividing $\gcd(P_{m,0}(\lambda)-2, P_{n, 1}(\lambda)-2)$ for some $m, n\in\mathbb{N}$. The steadily growing counts suggest that $C(0, 1; 2)$ is infinite, thus supporting Conjecture~\ref{conf:fpbar}.

\begin{table}[h!]
\centering
\caption{Irreducible polynomials that divide $\gcd(P_{m,0}(\lambda)-2, P_{n, 1}(\lambda)-2)$}
\label{tab:F3:d=2:[0,1,2]}
\begin{tabular}{cccccccccccccc}
\hline\hline
degree of the irreducible polynomial & 1 & 2 & 3 & 4 & 5 & 6 & 7 & 8 & 9 & 10 & 11 & 12 & 13 \\
\hline
number of successful polynomials & 2 & 1 & 1 & 4 & 1 & 4 & 7 & 15 & 24 & 29 & 53 & 70 & 120 \\
\hline\hline
\end{tabular}
\end{table}
\end{example}

\begin{example} We work over $\mathbb{F}_9$ and let $\epsilon\in\mathbb{F}_{9}\setminus \mathbb{F}_3$ with $\epsilon^2=-1$. Let $d=2$, $\alpha_1=1$, $\alpha_2=\epsilon+1$, and $\beta=\epsilon$. The growing counts in Table~\ref{tab:F9:d=2:[1,e+1,e]} suggest that $C(1, \epsilon+1; \epsilon)$ is infinite, again supporting Conjecture~\ref{conf:fpbar:perm-poly}.

\begin{table}[h!]
\centering
\caption{Irreducible polynomials that divide $\gcd(P_{m,1}(\lambda)-\epsilon, P_{n, \epsilon+1}(\lambda)-\epsilon)$}
\label{tab:F9:d=2:[1,e+1,e]}
\begin{tabular}{ccccccc}
\hline\hline
degree of the irreducible polynomial & 1 & 2 & 3 & 4 & 5 & 6  \\
\hline
number of successful polynomials & 3 & 1 & 4 & 7 & 40 & 60 \\
\hline\hline
\end{tabular}
\end{table}
\end{example}

\begin{example}
 We work over $\mathbb{F}_8$ and let $\xi\in\mathbb{F}_{8}\setminus \mathbb{F}_2$ be an element that satisfies $\xi^3=\xi+1$. Let $d=3$, $\alpha_1=1$, $\alpha_2=\xi$, and $\beta=\xi^2+\xi+1$. Table~\ref{tab:F8} shows the count of irreducible polynomials of a given degree over $\mathbb{F}_8$ that divide $\gcd(P_{m,1}(\lambda)-(\xi^2+\xi+1), P_{n, \xi}(\lambda)-(\xi^2+\xi+1))$. The growing numbers suggest that $C(1, \xi; \xi^2+\xi+1)$ is infinite, which supports Conjecture~\ref{conf:fpbar}. This example also exhibits an interesting feature: there appear to be considerably more values of $\lambda$ whose minimal polynomial over $\mathbb{F}_8$ has an odd degree compared to an even degree. While we do not have a full explanation for this phenomenon, this parity imbalance also highlights the difficulty of Conjecture~\ref{conf:fpbar}. We note that for each $\l\in \mathbb{F}_{8^{2k+1}}$ (for each $k\in\N$), the polynomial $f_\l$ induces a permutation on $\mathbb{F}_{8^{2k+1}}$ and therefore, the problem of colliding orbits becomes a question of having \emph{one} periodic cycle (see also Subsection~\ref{subsec:9.2}) in $\mathbb{F}_{8^{2k+1}}$ (for $f_\l$) containing \emph{all three} points $\alpha_1,\alpha_2,\beta$. The numerical evidence from this example suggests that it is \emph{more likely} for the three points to belong to the same periodic cycle, rather than for there to be different preperiodicity portraits for the orbits of $\alpha_1$ and $\alpha_2$, both of which contain $\beta$.

 \begin{table}[h!]
\centering
\caption{Irreducible polynomials that divide $\gcd(P_{m,1}(\lambda)-(\xi^2+\xi+1), P_{n, \xi}(\lambda)-(\xi^2+\xi+1))$}
\label{tab:F8}
\begin{tabular}{ccccccc}
\hline\hline
degree of the irreducible polynomial & 1 & 2 & 3 & 4 & 5 & 6 \\
\hline
number of successful polynomials & 4 & 2 & 63 & 7 & 2265 & 31 \\
\hline\hline
\end{tabular}
\end{table}
\end{example}

\begin{example}
 We work over $\mathbb{F}_5$. Let $d=10$, $\alpha_1=1$, $\alpha_2=2$, and $\beta=3$. We consider this case because $p\mid d$, which one may suspect has a different answer; after all, the case $d=p^{\ell}$ \emph{does} exhibit a special behavior (see Theorem~\ref{thm:converse_ii}). Table~\ref{tab:F5:d=10:[1,2,3]} shows the count of irreducible polynomials of a given degree over $\mathbb{F}_5$ that divide $\gcd(P_{m,1}(\lambda)-3, P_{n, 2}(\lambda)-3)$. Once again, the growing numbers suggest that $C(1, 2; 3)$ is infinite, thus providing support for Conjecture~\ref{conf:fpbar}.

 \begin{table}[h!]
\centering
\caption{Irreducible polynomials that divide $\gcd(P_{m,1}(\lambda)-3, P_{n, 2}(\lambda)-3)$}
\label{tab:F5:d=10:[1,2,3]}
\begin{tabular}{cccccccccc}
\hline\hline
degree of the irreducible polynomial & 1 & 2 & 3 & 4 & 5 & 6 & 7 & 8 & 9 \\
\hline
number of successful polynomials & 1 & 1 & 1 & 2 & 8 & 13 & 18 & 43 & 103 \\
\hline\hline
\end{tabular}
\end{table}
\end{example}


\subsection{Numerical evidence for a more general question}
\label{subsec:9.4}

We switch now to a different family of polynomials:
\begin{equation}
\label{eq:g}
g_\l(z)=z^3+z+\l\text{ (parameterized by $\l\in\overline{\mathbb{F}_5}$);}
\end{equation}
also, we consider two starting points $\alpha_1,\alpha_2$ and one target point $\beta$. As before, we are interested in whether the set
\begin{equation}
\label{eq:C_g}
C_g(\alpha_1,\alpha_2;\beta)\colonequals \left\{\l\in\overline{\mathbb{F}_5}\colon \text{ there exist $m,n\in\N$ such that $g_\l^m(\alpha_1)=g_\l^n(\alpha_2)=\beta$}\right\}
\end{equation}
is infinite. For $\alpha_1=1$, $\alpha_2=3$ and $\beta=2$, we define the corresponding recurrence polynomials (for all $m,n\in\N$):
$$P_{g,m, 1}(\lambda)\colonequals g_{\lambda}^{m}(1)\text{ and }P_{g,n, 3}(\lambda)\colonequals g_{\lambda}^{n}(3).$$  
Table~\ref{tab:F5:d=3:[1,3,2]} shows the count of (monic) irreducible polynomials of a given degree over $\mathbb{F}_5$ that divides $\gcd\left(P_{g,m,1}(\lambda)-2, P_{g,n, 3}(\lambda)-2\right)$. The steadily growing numbers suggest that the set $C_g(1, 3; 2)\subseteq \overline{\mathbb{F}}_5$ corresponding to this polynomial $f_{\lambda}(z)=z^3+z+\lambda$ is infinite.

\begin{table}[h!]
\centering
\caption{Irreducible polynomials that divide $\gcd(P_{g,m,1}(\lambda)-2,
P_{g,n, 3}(\lambda)-2)$}
\label{tab:F5:d=3:[1,3,2]}
\begin{tabular}{cccccccccc}
\hline\hline
degree of the irreducible polynomial & 1 & 2 & 3 & 4 & 5 & 6 & 7 & 8 & 9 \\
\hline
number of successful polynomials & 1 & 0 & 1 & 5 & 6 & 17 & 24 & 32 & 114 \\
\hline\hline
\end{tabular}
\end{table}

This leads us to believe that the following Question has a positive answer.
\begin{question}
\label{quest:general}
Let $g\in\Fpbar[z]$ be a polynomial of degree $d\ge 2$, which is not an additive polynomial. We consider the family of polynomials $g_\l(z)=g(z)+\l$, parameterized by $\l\in\Fpbar$. Is it true that for each $\alpha_1,\alpha_2,\beta\in\Fpbar$, the set
\begin{equation}
\label{eq:C_g_2}
C_g(\alpha_1,\alpha_2;\beta)\colonequals \left\{\l\in\Fpbar\colon \text{ there exist $m,n\in\N$ such that $g_\l^m(\alpha_1)=g_\l^n(\alpha_2)=\beta$}\right\}
\end{equation}
is infinite?
\end{question}

A positive answer to Question~\ref{quest:general} supports the relevance of condition~\ref{conj:item-4} from Conjecture~\ref{conj:general}. 


\subsection{Collision of multiple orbits}
\label{subsec:9.5} 

We conclude the paper with a new question that leads us to an uncharted territory in the study of collision of orbits. Conjecture~\ref{conf:fpbar} considers the intersection of two orbits, which is the focus of this paper. This naturally leads to a more general question. So, given a polynomial $g\in\Fpbar[z]$ of degree $d$, which is not an additive polynomial, we let $g_\l(z)\colonequals g(z)+\l$ be a family of polynomials parameterized by $\l\in\Fpbar$. Then for any integer $s\ge 2$ and any given $\alpha_1, \dots, \alpha_s, \beta\in \Fpbar$, we define:
\begin{equation}
\label{eq:0010}
C_g(\alpha_1, \dots, \alpha_s; \beta) \colonequals \{\lambda\in\Fpbar: \text{ there exists } n_i\in \mathbb{N} \text { such that } f_\l^{n_i}(\alpha_i)=\beta \text { for } i=1,\dots, s\}.
\end{equation}
\begin{question} 
\label{quest:multiple}
Given a polynomial $g\in\Fpbar[z]$ of degree $d\ge 2$, which is not an additive polynomial, then using  the notation from \eqref{eq:0010}, what is the smallest integer $s\geq 2$ with the property that there exist
$\alpha_1,\dots,\alpha_s,\beta\in\Fpbar$ such that the corresponding set
$C_g(\alpha_1,\dots,\alpha_s; \beta)$ is finite?
\end{question}

We present one of the many examples we tested for our Question~\ref{quest:multiple}.
\begin{example}
 We work over $\mathbb{F}_5$, and let $\alpha_1=1$, $\alpha_2=2$, $\alpha_3=3$, $\beta=4$. We consider the two families of polynomials $f_{\lambda}(z)=z^3+\lambda$ and $g_{\lambda}(z)=z^4+z+\lambda$. 
 Tables~\ref{tab:multiple:orbits:x^3+lambda} and \ref{tab:multiple:orbits:x^3+x+lambda} show the counts of (monic) irreducible polynomials in both settings. The growing counts suggest that the sets $C_f(1, 2, 3; 4)$ and $C_g(1, 2, 3; 4)$ are both infinite.  We also see a qualitative difference: the counts for $C_f(1, 2, 3; 4)$ exhibit a parity imbalance (with a clear bias for odd degrees), while the counts for $C_g(1, 2, 3; 4)$ show a general upward trend as the degree increases. 

We believe that the imbalance from Table~\ref{tab:multiple:orbits:x^3+lambda} for odd degree polynomials may again be a consequence of the fact that for each  $\l\in \mathbb{F}_{5^{2k+1}}$, the polynomial $f_\l(z)$ induces a permutation polynomial on $\mathbb{F}_{5^{2k+1}}$. It seems far more likely that $\alpha_1,\alpha_2,\alpha_3,\beta$ all live in the same periodic cycle (under the action of $f_\l(z)$) rather than any other configuration of the preperiodicity portraits for the orbits of $\alpha_1,\alpha_2,\alpha_3$ intersecting at $\beta$. The latter scenario could happen when $\l\in \mathbb{F}_{5^{2k}}$ since the points may no longer be periodic under the action of $f_\l(z)$. 
\end{example}

\begin{table}[h!]
\centering
\caption{Irreducible polynomials  that divide $\gcd(P_{f, m,1}(\lambda)-4, P_{f, n, 2}(\lambda)-4, P_{f, k, 3}(\lambda)-4)$}
\label{tab:multiple:orbits:x^3+lambda}
\begin{tabular}{ccccccccc}
\hline\hline
degree of the irreducible polynomial & 1 & 2 & 3 & 4 & 5 & 6 & 7 & 8 \\
\hline
number of successful polynomials & 0 & 1 & 8 & 2 & 154 & 6 & 2732 & 28 \\
\hline\hline
\end{tabular}
\end{table}

\begin{table}[h!]
\centering
\caption{Irreducible polynomials that divide $\gcd(P_{g, m,1}(\lambda)-4, P_{g, n, 2}(\lambda)-4, P_{g, k, 3}(\lambda)-4)$}
\label{tab:multiple:orbits:x^3+x+lambda}
\begin{tabular}{ccccccccc}
\hline\hline
degree of the irreducible polynomial & 1 & 2 & 3 & 4 & 5 & 6 & 7 & 8 \\
\hline
number of successful polynomials & 1 & 0 & 3 & 5 & 7 & 10 & 24 & 43  \\
\hline\hline
\end{tabular}
\end{table}

Our numerical data suggest that the original intuition for collision of orbits must be revised when working over $\Fpbar$ due to the underlying finite combinatorics. More precisely, when the starting points $\alpha_j$ (for $j=1, \dots, s$), the target point $\beta$, and the parameter $\l$ all belong to a finite field $\mathbb{F}_{p^k}$, we are asking whether $\beta$ is contained in \emph{each} of the finite subsets $\OO_{f_\l}(\alpha_j)$ for $1\le j\le s$. Even though this is \emph{unlikely} for any \emph{given} $\l$, the positive answer becomes \emph{likely} once we look over \emph{all} $\l\in\mathbb{F}_{p^k}$. In contrast, when the field of definition for the starting points $\alpha_j$ and for the target point $\beta$ has positive transcendence degree, the collision of orbits is \emph{unlikely} without a well-defined global dynamical relation between the points (see Theorem~\ref{thm:main_0}). This new perspective in characteristic $p$ is reminiscent of the likely, unlikely, and impossible intersections studied in \cite{likely}, where the transcendence degree of the base field was also the crucial factor in determining the nature of an intersection.


\end{document}